\documentclass{amsart}
\usepackage[margin=1in]{geometry}
\usepackage{amsthm}
\usepackage[foot]{amsaddr}

\usepackage[margin=1in]{geometry}
\usepackage{amsfonts,amsmath,amssymb}
\usepackage{bm}
\usepackage{booktabs}
\usepackage{enumitem}
\usepackage{graphicx}
\usepackage{hyperref}
\usepackage{cleveref} % Must come after hyperref
\usepackage{stmaryrd}
\usepackage{tikz}
\usepackage{tikz-cd}
\usepackage{xspace}

\usepackage{siunitx}
\sisetup{group-separator={,}}

\usepackage[giveninits,date=year,sortcites]{biblatex}
\addbibresource{refs.bib}

\def\myqed{\qedhere}

\let\div\undefined
\DeclareMathOperator{\div}{div}
\DeclareMathOperator{\curl}{\mathbf{curl}}
\DeclareMathOperator{\grad}{\mathbf{grad}}
\DeclareMathOperator{\diag}{diag}
\def\Hcurl{\bm{H}(\curl)}
\def\Hdiv{\bm{H}(\div)}
\def\HcurlOmega{\bm{H}(\curl, \Omega)}
\def\HdivOmega{\bm{H}(\div, \Omega)}
\def\Nedelec{N\'ed\'elec\xspace}

\def\NI{\mathit{NI}}

\def\Vp{V_p}
\def\Vh{V_h}
\def\Wp{\bm{W}_p}
\def\Wh{\bm{W}_h}
\def\Xp{\bm{X}_p}
\def\Xh{\bm{X}_h}
\def\Ypm{Y_{p-1}}
\def\Yp{Y_p}
\def\Yh{Y_h}
\def\Zp{Z_p}
\def\Zh{Z_h}

\def\refinterval{[-1,1]}

\def\llb{\llbracket}
\def\rrb{\rrbracket}

\def\iii{\vert\kern-0.25ex\vert\kern-0.25ex\vert}

\newtheorem{prop}{Proposition}
\newtheorem{cor}{Corollary}
\newtheorem{lem}{Lemma}
\newtheorem{thm}{Theorem}
\theoremstyle{definition}
\newtheorem{defn}{Definition}
\theoremstyle{remark}
\newtheorem{rem}{Remark}

\Crefname{prop}{Proposition}{Propositions}
\crefname{thm}{theorem}{theorems}

\title[Low-order preconditioning for high-order de Rham]{Low-order preconditioning for the high-order finite element de~Rham complex}

\author[Pazner]{Will Pazner$^1$}
\author[Kolev]{Tzanio Kolev$^1$}
\address{$^1$Center for Applied Scientific Computing, Lawrence Livermore National Laboratory, Livermore, CA}
\author[Dohrmann]{Clark Dohrmann$^2$}
\address{$^2$Computational Solid Mechanics and Structural Dynamics Department, Sandia National Laboratories, Albuquerque, NM}

\begin{document}

\begin{abstract}
   In this paper we present a unified framework for constructing spectrally equivalent low-order-refined discretizations for the high-order finite element de Rham complex.
This theory covers diffusion problems in $H^1$, $\Hcurl$, and $\Hdiv$, and is based on combining a low-order discretization posed on a refined mesh with a high-order basis for \Nedelec and Raviart--Thomas elements that makes use of the concept of polynomial histopolation (polynomial fitting using prescribed mean values over certain regions).
This spectral equivalence, coupled with algebraic multigrid methods constructed using the low-order discretization, results in highly scalable matrix-free preconditioners for high-order finite element problems in the full de Rham complex.
Additionally, a new lowest-order (piecewise constant) preconditioner is developed for high-order interior penalty discontinuous Galerkin (DG) discretizations, for which spectral equivalence results and convergence proofs for algebraic multigrid methods are provided.
In all cases, the spectral equivalence results are independent of polynomial degree and mesh size; for DG methods, they are also independent of the penalty parameter.
These new solvers are flexible and easy to use; any ``black-box'' preconditioner for low-order problems can be used to create an effective and efficient preconditioner for the corresponding high-order problem.
A number of numerical experiments are presented, using the finite element library MFEM, with which the construction of such preconditioners requires only one or two lines of code.
The theoretical properties of these preconditioners are corroborated, and the flexibility and scalability of the method are demonstrated on a range of challenging three-dimensional problems.
\end{abstract}
\maketitle

\section{Introduction}

High-order finite element methods posed on the discrete de Rham complex (cf.\ \cite{Arnold2006,Arnold2006a}) are of increasing relevance for a wide range of computational applications \cite{Anderson2020,Holec2022,Demkowicz1998,Arnold2006b,White2006}.
Problems such as electromagnetic diffusion (Maxwell's equations), radiation--diffusion transport, and porous media flow require the solution of finite element problems posed in $\Hcurl$ and $\Hdiv$ spaces (using \Nedelec and Raviart--Thomas elements, respectively) \cite{Monk2003,Rieben2007,Brunner2013}.
Discontinuous Galerkin (DG) discretizations of incompressible fluid flow \cite{Krank2017,Shahbazi2007} and radiative transfer \cite{Haut2020} require the solution of interior penalty-type DG discretizations of elliptic problems posed on $L^2$ finite element spaces.
In addition to their attractive accuracy properties, high-order discretizations also lend themselves well to efficient implementations on modern computing architectures \cite{Kolev2021,Kolev2021a,Fischer2020}; in particular, sum-factorization techniques allow for efficient operator evaluation on tensor-product (quadrilateral and hexahedral) meshes at high-orders \cite{Melenk2001,Orszag1980}.
However, the efficient iterative solution of the linear systems that result from these discretizations remains a challenging problem.
The condition number of these linear systems typically scales like $\mathcal{O}(p^3/h^2)$ or $\mathcal{O}(p^4/h^2)$, where $p$ is the polynomial degree, and $h$ is the mesh size, cf.\ \cite{Maitre1996,Melenk2002}, necessitating the use of scalable preconditioners that are robust with respect to the polynomial degree.
Furthermore, while sum factorization techniques give efficient operator evaluation, it is typically not feasible to assemble and store the associated system matrix, ruling out the direct use of matrix-based preconditioning techniques such as incomplete factorizations and algebraic multigrid.
This motivates the development of \textit{matrix-free} preconditioners, that can be constructed without explicit access to the matrix entries of the high-order operator.
Matrix-free solvers for continuous Galerkin discretizations of elliptic problems have been well studied \cite{Kronbichler2018,Kronbichler2019,Bastian2019,Ljungkvist2017,Pazner2020a}.
In this paper, we develop scalable matrix-free preconditioners for high-order finite element discretizations posed on $\Hcurl$, $\Hdiv$, and $L^2$ finite element spaces using a low-order-refined methodology.
Our work extends this well know approach (see below) from $H^1$ to the full de Rham complex, utilizing and complementing the results from \cite{Gerritsma2010} and \cite{Dohrmann2021a}.

Low-order preconditioning is a classical technique for preconditioning high-order and spectral discretizations of the Poisson problem.
This approach was first proposed by Orszag in 1980, who studied the use of standard finite difference methods as preconditioners for spectral methods \cite{Orszag1980}.
The idea was further developed, using low-order finite element methods as a preconditioner for Chebyshev spectral and pseudospectral methods, independently by Deville and Mund \cite{Deville1985,Deville1990} and Canuto and Quarteroni \cite{Canuto1985,Canuto1994}; see \cite{Canuto2010} for a review of these techniques.
Fischer and Lottes developed Schwarz solvers for the pressure solver of the incompressible Navier--Stokes equations using low-order preconditioning in \cite{Fischer1997,Lottes2005}.
Pazner and Kolev applied low-order preconditioning to high-order continuous and discontinuous Galerkin methods with (nonconforming) $hp$-refinement \cite{Pazner2020a,Pazner2021b}.
The aforementioned works use tensor-product elements (mapped quadrilaterals and hexahedra); Chalmers and Warbuton \cite{Chalmers2018} and Olson \cite{Olson2007} considered the extension to simplex elements.

The key property that enables the low-order preconditioning of high-order methods is the spectral equivalence of the low-order and high-order operators \cite{Canuto2006,Canuto2007}.
This property is often known as the finite element method--spectral element method (FEM--SEM) equivalence, and was shown in \cite{Canuto1994,Parter1995}.
The extension of these low-order equivalence results from $H^1$ to $\Hcurl$ and $\Hdiv$ is more involved, in part because of the nontrivial nullspaces of the curl and divergence operators (consisting of irrotational and solenoidal vector fields, respectively).
In contrast, in the case of $H^1$ finite element spaces, the kernel of the gradient operator consists only of constant functions.
In this paper, we give a unified presentation of high-order--low-order spectral equivalence for all spaces in the de Rham complex.
These spectral equivalences are based on properties of one-dimensional polynomial interpolation and histopolation operators (polynomial histopolation is the process of finding a polynomial with given integrals over a certain number of disjoint intervals).
The interpolation and histopolation operators give rise to specially designed high-order bases for \Nedelec and Raviart--Thomas elements.
These bases were introduced in \cite{Gerritsma2010}, and their spectral equivalence properties and applications to preconditioning were studied in \cite{Dohrmann2021a}.

The structure of the paper is as follows.
High-order and low-order interpolation and histopolation in one spatial dimension are discussed in \Cref{sec:interp-histop}.
The extension to differential operators in multiple spatial dimensions is then considered in \Cref{sec:multi-d}.
The application of these properties to the spectral equivalence of high-order and low-order-refined finite element discretizations is presented in \Cref{sec:fe}.
The performance of matrix-free preconditioners for high-order problems based on this spectral equivalence is studied numerically in \Cref{sec:numerical-results};
in this section, we consider scalable preconditioners using algebraic multigrid methods applied to large-scale three-dimensional problems.
We end with conclusions in \Cref{sec:conclusions}.

\section{One-dimensional interpolation and histopolation} \label{sec:interp-histop}

The high-order--low-order-refined equivalence depends on the appropriate choice of basis for the high-order finite element spaces.
In this work, we construct bases for the $H^1$, $\Hcurl$ (\Nedelec), $\Hdiv$ (Raviart--Thomas), and $L^2$ finite element spaces on meshes with tensor-product elements that are designed to satisfy this equivalence.
These bases are defined using the concepts of polynomial \textit{interpolation} and \textit{histopolation} on the reference 1D interval $\refinterval$.
Interpolation defines a nodal (Lagrange) basis for the space $\mathcal{Q}_p$ of degree-$p$ polynomials in $\refinterval$ using a set of $p+1$ distinct nodal points and values.
Histopolation, on the other hand, is a procedure that defines a degree-$(p-1)$ polynomial in terms of its integrals over $p$ distinct subintervals.

\begin{defn}[Interpolation]
   Let $\{ x_i \}_{i=1}^{p+1} \subseteq \refinterval$ be a set of $p+1$ distinct nodes.
   Then, the associated \textbf{interpolation operator} $\mathcal{I}_p : \mathbb{R}^{p+1} \to \mathcal{Q}_p$ maps prescribed point values $\mathsf{u} = \{ \mathsf{u}_i \}_{i=1}^{p+1}$ to the unique polynomial interpolant $u = \mathcal{I}_p(\mathsf{u}) \in \mathcal{Q}_p$ such that $u(x_i) = \mathsf{u}_i$ for all $i$.
\end{defn}

\begin{defn}[Histopolation]
   Let $\{ I_i = (x_i^L, x_i^R ) \}_{i=1}^{p}$ be a set of $p$ disjoint subintervals of $\refinterval$, and let $h_i$ denote the size of the $i$th interval, $h_i = x_i^R - x_i^L$.
   Then, the associated \textbf{histopolation operator} $\mathcal{H}_{p-1} : \mathbb{R}^{p} \to \mathcal{Q}_{p-1}$ maps prescribed average values $\widehat{\mathsf{u}} = \{ \widehat{\mathsf{u}}_i \}_{i=1}^{p}$ to the unique polynomial histopolant $u = \mathcal{H}_{p-1}(\widehat{\mathsf{u}}) \in \mathcal{Q}_{p-1}$ such that $\frac{1}{h_i} \int_{x_i^L}^{x_i^R} u(x) \, dx = \widehat{\mathsf{u}}_i$ for all $i$.
\end{defn}

\begin{prop}
   The interpolation and histopolation operators $\mathcal{I}_p$ and $\mathcal{H}_{p-1}$ are well-defined linear bijections.
\end{prop}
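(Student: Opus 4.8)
The plan is to prove both claims by the same linear-algebra strategy, since an interpolation (resp. histopolation) operator is well-defined precisely when the associated linear system has a unique solution. Both $\mathcal{I}_p$ and $\mathcal{H}_{p-1}$ are manifestly linear maps between finite-dimensional spaces of equal dimension: $\mathbb{R}^{p+1}\to\mathcal{Q}_p$ has domain and codomain both of dimension $p+1$, and $\mathbb{R}^{p}\to\mathcal{Q}_{p-1}$ has domain and codomain both of dimension $p$. Hence, for each operator, bijectivity is equivalent to injectivity, which in turn reduces to showing that the only element of $\mathcal{Q}_p$ (resp.\ $\mathcal{Q}_{p-1}$) mapping to the zero data is the zero polynomial. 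So the whole proof reduces to two kernel computations.

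For interpolation, I would argue that if $u\in\mathcal{Q}_p$ satisfies $u(x_i)=0$ for all $p+1$ distinct nodes $x_i$, then $u$ is a degree-$p$ polynomial with $p+1$ distinct roots, and therefore $u\equiv 0$. This immediately gives injectivity, hence bijectivity by the dimension count, and linearity is clear from the definition. For histopolation I would use the antiderivative trick: given $u\in\mathcal{Q}_{p-1}$, set $U(x)=\int_{-1}^x u(t)\,dt\in\mathcal{Q}_p$. The condition $\frac{1}{h_i}\int_{x_i^L}^{x_i^R}u\,dx = 0$ is equivalent to $U(x_i^R)=U(x_i^L)$ for each of the $p$ subintervals. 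If the data is zero, then $U$ takes equal values at both endpoints of each interval, so by Rolle's theorem $U'=u$ vanishes at some point strictly inside each $I_i$. Since the $I_i$ are disjoint, these yield $p$ distinct roots of the degree-$(p-1)$ polynomial $u$, forcing $u\equiv 0$.

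The main obstacle, and the only genuinely nontrivial point, is the kernel argument for histopolation: unlike the interpolation case, vanishing of the prescribed integrals does not directly exhibit roots of $u$, so one must pass to the antiderivative $U$ and invoke Rolle's theorem to manufacture the $p$ interior roots. The disjointness hypothesis on the subintervals is exactly what guarantees these $p$ roots are distinct; without it the argument would break down. Once injectivity is established in both cases, I would conclude well-definedness (every data vector has an image, since surjectivity follows from injectivity plus equal dimensions) and bijectivity simultaneously, and note that linearity in each case is an immediate consequence of the linearity of point evaluation and of integration in the defining conditions.
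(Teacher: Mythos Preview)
Your proposal is correct and follows essentially the same strategy as the paper: reduce bijectivity to injectivity via a dimension count, then show the kernel is trivial by exhibiting $p$ (resp.\ $p+1$) distinct roots of a polynomial of degree $p-1$ (resp.\ $p$). The only minor difference is in how the roots are produced for histopolation: you pass to the antiderivative and invoke Rolle's theorem, whereas the paper argues directly that a vanishing integral over $I_i$ forces either $u\equiv 0$ or a sign change (hence a root) in $I_i$; both devices yield the same $p$ interior roots and the conclusion is identical.
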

\begin{proof}
   The operator $\mathcal{I}_p$ can be defined through the classical Lagrange interpolation procedure.
   For the case of $\mathcal{H}_{p-1}$, it suffices to show that for a given $u \in \mathcal{Q}_{p-1}$, if $\int_{x_i^L}^{x_i^R} u \, dx = 0$ for all $i$ then $u \equiv 0$.
   This is a well-known and simple result, which we repeat here (cf.\ \cite{Chihara1978}, see also \cite{Kolev2022} for a generalization).
   On each of the disjoint intervals $I_i$, since $\int_{x_i^L}^{x_i^R} u \, dx = 0$, either $u \equiv 0$ or $u$ changes sign on $I_i$.
   In the latter case, $u \in \mathcal{Q}_{p-1}$ has $p$ distinct zeros, and so $u \equiv 0$.
\end{proof}

\subsection{Gauss--Lobatto points}
In what follows, we choose the interpolation points and histopolation subintervals to be compatible, in the sense that the $p+1$ interpolation points $\{ x_i \}_{i=1}^{p+1}$ also define the $p$ subintervals by $\{ (x_i, x_{i+1} ) \}_{i=1}^p$.
In particular, we choose $x_i$ to be the $p+1$ Gauss--Lobatto points on the reference interval, which are given by the zeros of $(1 - x^2) P_p'(x)$, where $P_p(x)$ is the degree-$p$ Legendre polynomial.
Since these points include the interval endpoints, the associated subintervals form a partition of $\refinterval$.
Additionally, these points, as well as others such as the Gauss--Legendre and Chebyshev points, are asymptotically distributed according to the Chebyshev density $(p + 1) / (\pi \sqrt{1 - x^2})$ \cite{Szego1939,Berrut2004}.

\begin{rem}[Notation]
   We will write $a \lesssim b$ to mean that $a \leq C b$, where $C$ is independent of the polynomial degree $p$ (and other discretization parameters, such as the mesh size $h$, when relevant). $a \gtrsim b$ means $b \lesssim a$, and $a \approx b$ means both $a \lesssim b$ and $b \lesssim a$.
   Similarly, for two symmetric and positive-definite matrices $A$ and $B$, $A \sim B$ means that $A$ and $B$ are spectrally equivalent, and the constants of equivalence are independent of $p$ (and other discretization parameters).
\end{rem}

\subsection{Low-order equivalences} \label{sec:1d-lor-equiv}

Analogous to the high-order interpolation and histopolation operators, we also define low-order interpolation and histopolation operators, $\mathcal{I}_h$ and $\mathcal{H}_h$.
Given the interpolation points $\{ x_i \}$, let $V_h$ denote the space of piecewise linear functions with nodes at each point $x_i$.
Similarly, given the $p$ disjoint subintervals $\{ (x_i, x_{i+1}) \}$, we define $\widehat{V}_h$ to be the space of piecewise constant functions spanned by the indicator functions of these subintervals.
Let $\mathcal{I}_h : \mathbb{R}^{p+1} \to V_h$ denote the piecewise linear interpolation operator, and let $\mathcal{H}_h : \mathbb{R}^{p} \to \widehat{V}_h$ denote the operator that maps average values to the piecewise constant function in $\widehat{V}_h$ taking those values over each subinterval.
Then, we have the following equivalences in the $L^2(\refinterval)$-norm (cf.\ \cite{Canuto1994,Kolev2022}).

\begin{prop} \label{prop:1d-l2-equiv}
   It holds that
   \begin{equation} \label{eq:1d-l2-equiv}
      \| \mathcal{I}_h \mathsf{u} \|_0^2 \approx \| \mathcal{I}_p \mathsf{u} \|_0^2
      \qquad
      \text{for all $\mathsf{u} \in \mathbb{R}^{p+1}$},
   \end{equation}
   \begin{equation} \label{eq:1d-l2-histop-equiv}
      \| \mathcal{H}_h \widehat{\mathsf{u}} \|_0^2 \approx \| \mathcal{H}_{p-1} \widehat{\mathsf{u}} \|_0^2
      \qquad
      \text{for all $\widehat{\mathsf{u}} \in \mathbb{R}^p$}.
   \end{equation}
\end{prop}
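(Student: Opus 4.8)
The plan is to recast both equivalences as spectral equivalences between the mass (Gram) matrices of the high- and low-order operators, and to compare each of them against a common diagonal reference form. Writing $\|\mathcal{I}_p \mathsf{u}\|_0^2 = \mathsf{u}^\top M_p \mathsf{u}$ and $\|\mathcal{I}_h \mathsf{u}\|_0^2 = \mathsf{u}^\top M_h \mathsf{u}$, where $M_p$ is the mass matrix of $\mathcal{Q}_p$ in the Lagrange basis at $\{x_i\}$ and $M_h$ is the piecewise-linear mass matrix on the same nodes, the estimate \eqref{eq:1d-l2-equiv} is exactly the statement $M_p \sim M_h$. Similarly, writing $\|\mathcal{H}_{p-1}\widehat{\mathsf{u}}\|_0^2 = \widehat{\mathsf{u}}^\top \widehat{M}_p \widehat{\mathsf{u}}$ and $\|\mathcal{H}_h \widehat{\mathsf{u}}\|_0^2 = \widehat{\mathsf{u}}^\top \widehat{M}_h \widehat{\mathsf{u}}$, estimate \eqref{eq:1d-l2-histop-equiv} reads $\widehat{M}_p \sim \widehat{M}_h$. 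For \eqref{eq:1d-l2-equiv} the reference form will be the diagonal matrix $D = \diag(w_i)$ of Gauss--Lobatto quadrature weights $w_i$ associated with the nodes $\{x_i\}$; for \eqref{eq:1d-l2-histop-equiv} it will be the exact diagonal $\widehat{M}_h = \diag(h_i)$, with $h_i = x_{i+1} - x_i$.

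For the interpolation equivalence, I would first invoke the classical Gauss--Lobatto quadrature estimate. Since the $(p+1)$-point rule is exact on $\mathcal{Q}_{2p-1}$ and $u^2 \in \mathcal{Q}_{2p}$ for $u \in \mathcal{Q}_p$, only the top Legendre mode of $u^2$ is misintegrated, yielding the two-sided bound $\|u\|_0^2 \le \sum_i w_i \, u(x_i)^2 \le (2 + 1/p)\|u\|_0^2$. Applied to $u = \mathcal{I}_p \mathsf{u}$, for which $u(x_i) = \mathsf{u}_i$, this gives $M_p \sim D$ with $p$-independent constants. Next, $M_h$ is tridiagonal, assembled from the elementwise contributions $\tfrac{h}{6}\bigl(\begin{smallmatrix}2 & 1 \\ 1 & 2\end{smallmatrix}\bigr)$, and the standard equivalence of the consistent and row-sum--lumped $\mathbb{P}_1$ mass matrices gives $M_h \sim \diag(h_{i-1} + h_i)$, uniformly in the mesh. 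Finally, because the Gauss--Lobatto nodes are distributed according to the Chebyshev density, their weights obey $w_i \approx h_{i-1} + h_i$, so $D \sim \diag(h_{i-1} + h_i)$ and the chain $M_p \sim D \sim M_h$ closes \eqref{eq:1d-l2-equiv}.

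For the histopolation equivalence, one direction is immediate: $\mathcal{H}_h \widehat{\mathsf{u}}$ is the $L^2(\refinterval)$-orthogonal projection of $w = \mathcal{H}_{p-1}\widehat{\mathsf{u}}$ onto the piecewise-constant space $\widehat{V}_h$, since the piecewise constant sharing the subinterval averages of $w$ is its best $L^2$ approximation there. Hence $\|\mathcal{H}_h \widehat{\mathsf{u}}\|_0 \le \|\mathcal{H}_{p-1}\widehat{\mathsf{u}}\|_0$, equivalently $\widehat{M}_h \le \widehat{M}_p$; the same bound follows from Cauchy--Schwarz via $\bigl(\int_{I_i} w\bigr)^2 \le h_i \int_{I_i} w^2$. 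The real content is the reverse inequality $\widehat{M}_p \lesssim \widehat{M}_h$. Writing $P_h$ for the projection onto $\widehat{V}_h$ and using $\|w\|_0^2 = \|P_h w\|_0^2 + \|w - P_h w\|_0^2$, this amounts to the inverse-type estimate $\|w - P_h w\|_0 \lesssim \|P_h w\|_0$ for all $w \in \mathcal{Q}_{p-1}$: the oscillation of the histopolant about its subinterval means must be controlled by the means themselves, uniformly in $p$.

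This reverse histopolation bound is the step I expect to be the main obstacle. Unlike the interpolation case, it does not reduce to a single classical quadrature identity, and it is exactly where the Gauss--Lobatto structure of the subdivision (equivalently, the Chebyshev distribution of the $x_i$) is indispensable; a crude bound, or an equispaced subdivision, would introduce a dependence on $p$. I would establish it following the orthogonal-polynomial estimates of \cite{Kolev2022}, bounding the histopolation basis functions and their overlaps in terms of the node density, which upgrades the free lower bound $\widehat{M}_h \le \widehat{M}_p$ to the full equivalence $\widehat{M}_p \sim \widehat{M}_h$ and completes \eqref{eq:1d-l2-histop-equiv}.
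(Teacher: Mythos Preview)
Your sketch is correct and, in fact, more detailed than what the paper itself provides: the paper's proof of this proposition consists solely of two citations, deferring \eqref{eq:1d-l2-equiv} to \cite[Prop.~2.1]{Canuto1994} and \eqref{eq:1d-l2-histop-equiv} to \cite[Prop.~5]{Kolev2022}.

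Your outline for \eqref{eq:1d-l2-equiv} is exactly the Canuto--Quarteroni argument: the chain $M_p \sim D \sim M_h$ via the Gauss--Lobatto quadrature bound and the equivalence $w_i \approx h_{i-1} + h_i$ is the standard route. For \eqref{eq:1d-l2-histop-equiv}, your observation that $\mathcal{H}_h\widehat{\mathsf{u}}$ is the $L^2$-orthogonal projection of $\mathcal{H}_{p-1}\widehat{\mathsf{u}}$ onto $\widehat{V}_h$ gives the lower bound $\widehat{M}_h \le \widehat{M}_p$ for free, and you correctly isolate the reverse inequality $\|w - P_h w\|_0 \lesssim \|P_h w\|_0$ on $\mathcal{Q}_{p-1}$ as the substantive step requiring the Gauss--Lobatto structure. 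That is precisely the content of the cited result in \cite{Kolev2022}, so your plan to invoke those orthogonal-polynomial estimates is the right call; there is no shortcut that avoids them.
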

\begin{proof}
   The proof of \eqref{eq:1d-l2-equiv} is given in \cite[Prop.\ 2.1]{Canuto1994}.
   The proof of \eqref{eq:1d-l2-histop-equiv} is given in \cite[Prop.\ 5]{Kolev2022}.
\end{proof}

\begin{rem}
   It additionally holds that $\| \mathcal{H}_h \mathsf{u} \|_0^2 \approx \| \mathcal{I}_{p-1} \mathsf{u} \|_0^2$ for all $\mathsf{u} \in \mathbb{R}^p$.
   This equivalence directly compares the interpolation and histopolation operators applied to the same vector of values, and is a consequence of the asymptotic equivalence between the Gauss--Lobatto quadrature weights and subinterval lengths (cf.\ \cite[Proposition 3]{Kolev2022}).
\end{rem}

\begin{rem} \label{rem:basis}
   The operators $\mathcal{I}_p$ and $\mathcal{H}_{p-1}$ induce bases for the polynomial spaces $\mathcal{Q}_p$ and $\mathcal{Q}_{p-1}$, where each basis vector is the image of a standard Cartesian basis vector under the aforementioned operators.
   Similarly, the operator $\mathcal{I}_h$ induces the standard basis of ``hat functions'' on the piecewise linear space $V_h$, and $\mathcal{H}_h$ induces the basis of piecewise constant indicator functions on the space $\widehat{V}_h$.

   The high-order interpolatory (nodal) basis functions $\ell_j \in \mathcal{Q}_p$ are given by the Lagrange interpolating polynomials of the nodal points $x_i$.
   The high-order histopolatory basis functions $\vartheta_j \in \mathcal{Q}_{p-1}$ satisfy $\int_{x_i}^{x_{i+1}} \vartheta_j(x) \, dx = h_i \delta_{ij}$.
   It is straightforward to see that these functions are given by the negative partial sum of the derivatives of the nodal basis functions,
   \[
      \vartheta_i(x) = - h_i \sum_{k=1}^i \ell_k'(x).
   \]
   Graphical plots of these basis functions are shown in \cite{Dohrmann2021a}.
\end{rem}

The high-order and low-order interpolants and histopolants are closely related through their derivatives.
For any $f \in H^1$ that interpolates nodal values $(x_i, y_i)$, it is straightforward to see that $f'$ has mean value $m_i = (y_{i+1} - y_i)/(x_{i+1} - x_i)$ over the interval $[x_i, x_{i+1}]$.
As a consequence, the derivative $\mathcal{I}_p( y_i )'$ of the nodal interpolant of $f$ is equal to the histopolant $\mathcal{H}_{p-1}(m_i)$, and similarly $\mathcal{I}_h(y_i)' = \mathcal{H}_h(m_i)$.
This gives the following natural relationship between the derivatives of the high-order and low-order interpolants.
Let $I_h : \mathcal{Q}_p \to V_h$ and $H_h : \mathcal{Q}_{p-1} \to \widehat{V}_h$ be the high-order to low-order interpolation and histopolation operators, defined by $I_h = \mathcal{I}_h \mathcal{I}_p^{-1}$ and $H_h = \mathcal{H}_h \mathcal{H}_{p-1}^{-1}$, respectively.
Similarly, let $I_p = I_h^{-1}$ and $H_p = H_h^{-1}$ denote the low-order to high-order operators.

\begin{prop} \label{prop:deriv-histop}
   Let $\mathsf{u} \in \mathbb{R}^{p+1}$ be given, and let $u_p = \mathcal{I}_p(\mathsf{u}) \in \mathcal{Q}_p$, and $u_h = \mathcal{I}_h(\mathsf{u}) \in V_h$.
   Then,
   \[
      \mathcal{H}_{p-1}^{-1} u_p' = \mathcal{H}_h^{-1} u_h',
   \]
   and so
   \[
      u_h' = I_h(u_p)' = H_h (u_p')
      \qquad\text{and}\qquad
      u_p' = I_p(u_h)' = H_p (u_h').
   \]
   Equivalently, the following diagram commutes, where $\partial$ represents the derivative operator.
   \[
      \begin{tikzcd}
         \mathcal{Q}_p \arrow[d, "I_h"] \arrow[r, "\partial"] & \mathcal{Q}_{p-1} \arrow[d, "H_h"] \\
         V_h \arrow[r, "\partial"] & \widehat{V}_h
      \end{tikzcd}
   \]
\end{prop}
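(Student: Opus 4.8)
The plan is to reduce the entire statement to the single mean-value identity highlighted immediately before the proposition, and then unwind the definitions of the composite operators. First I would record the key fact that $u_p$ and $u_h$ interpolate the \emph{same} nodal data: by construction $u_p(x_i) = u_h(x_i) = \mathsf{u}_i$ for every $i$. By the fundamental theorem of calculus, $\int_{x_i}^{x_{i+1}} u_p'\,dx = \mathsf{u}_{i+1} - \mathsf{u}_i = \int_{x_i}^{x_{i+1}} u_h'\,dx$, so both derivatives share the common mean value $m_i = (\mathsf{u}_{i+1} - \mathsf{u}_i)/(x_{i+1} - x_i)$ over the $i$th subinterval $(x_i, x_{i+1})$. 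This is exactly the observation that the interpolation nodes and the histopolation subintervals have been chosen to be compatible.

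Next I would translate this into the histopolation operators, which is where the compatibility pays off. Since $u_p' \in \mathcal{Q}_{p-1}$ and the subintervals used to define $\mathcal{H}_{p-1}$ are precisely $\{(x_i, x_{i+1})\}_{i=1}^{p}$, the operator $\mathcal{H}_{p-1}^{-1}$ reads off the vector of subinterval means of $u_p'$, which is $\mathsf{m} = \{m_i\}_{i=1}^p$. On the low-order side, $u_h'$ is piecewise constant on the same partition, hence lies in $\widehat{V}_h$, and its constant value on the $i$th subinterval equals its mean there, namely $m_i$; therefore $\mathcal{H}_h^{-1}(u_h') = \mathsf{m}$ as well. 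Equating the two gives the central identity $\mathcal{H}_{p-1}^{-1} u_p' = \mathcal{H}_h^{-1} u_h'$.

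From this identity, the four operator relations follow by expanding the definitions $H_h = \mathcal{H}_h \mathcal{H}_{p-1}^{-1}$, $I_h = \mathcal{I}_h \mathcal{I}_p^{-1}$, and their inverses. For the derivative-then-histopolate route, $H_h(u_p') = \mathcal{H}_h(\mathcal{H}_{p-1}^{-1} u_p') = \mathcal{H}_h(\mathsf{m}) = u_h'$; for the interpolate-then-differentiate route, $I_h(u_p)' = (\mathcal{I}_h \mathcal{I}_p^{-1} u_p)' = (\mathcal{I}_h \mathsf{u})' = u_h'$. Thus both high-order-to-low-order paths agree, and applying $I_p = I_h^{-1}$ and $H_p = H_h^{-1}$ yields the two low-order-to-high-order statements. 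The commuting diagram is then merely a restatement: both paths from $\mathcal{Q}_p$ to $\widehat{V}_h$ send $u_p$ to $u_h'$.

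I do not expect a genuine obstacle here; the argument is essentially a one-line application of the fundamental theorem of calculus dressed in operator notation. The only point requiring care is the verification that $u_h'$ lands exactly in $\widehat{V}_h$ — that differentiating the continuous piecewise-linear interpolant produces precisely a function in the piecewise-constant space spanned by the subinterval indicators — and that $\mathcal{H}_h^{-1}$ therefore returns the subinterval values without any averaging discrepancy relative to $\mathcal{H}_{p-1}^{-1}$ on the high-order side.
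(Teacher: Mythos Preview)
Your proposal is correct and follows exactly the same approach as the paper: the fundamental theorem of calculus gives $\int_{x_i}^{x_{i+1}} u_h' = \mathsf{u}_{i+1} - \mathsf{u}_i = \int_{x_i}^{x_{i+1}} u_p'$, from which everything else is definition-chasing. The paper's proof is in fact just this one line; your version spells out the consequences more explicitly but adds nothing substantively different.
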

\begin{proof}
   $\int_{x_i}^{x_{i+1}} u_h'(x) \, dx = u_h(x_{i+1}) - u_h(x_i) = \mathsf{u}_{i+1} - \mathsf{u}_i = u_p(x_{i+1}) - u_p(x_i) = \int_{x_i}^{x_{i+1}} u_p'(x) \, dx.$
\end{proof}

\begin{rem} \label{rem:interp-histop}
   A restatement of the above proposition is that interpolating and then differentiating is equivalent to differentiating and then histopolating, using both the high-order and low-order operators.
   Consequently, from the perspective of high-order--low-order-refined equivalence, it is natural to represent functions by interpolating their point values at nodal points, and it is natural to represent their derivatives by histopolating average values of subintervals defined by the same nodal points.
\end{rem}

\begin{rem} \label{rem:interp-histop-conformity}
   Interpolation with nodes at the interval endpoints naturally allows for continuity across interfaces (i.e.\ when constructing conforming spaces), whereas histopolation does not.
   Therefore it is natural to use interpolation for $H^1$ spaces, and for the continuous tangential components in $\Hcurl$ and normal components in $\Hdiv$.
   Histopolation may be used for the components for which continuity is not enforced and for DG spaces.
\end{rem}

The above results can also be combined to give the equivalence of the $H^1$-seminorm of the high-order and low-order interpolants.

\begin{cor} \label{cor:1d-h1-equiv}
   $\| \left(\mathcal{I}_h \mathsf{u}\right)' \|_0^2 \approx \| \left(\mathcal{I}_p \mathsf{u}\right)' \|_0^2$ for all $\mathsf{u} \in \mathbb{R}^{p+1}$.
\end{cor}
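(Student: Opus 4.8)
The plan is to reduce this seminorm equivalence to the $L^2$-equivalence of histopolants already established in \eqref{eq:1d-l2-histop-equiv}, by exploiting the commuting-derivative relationship of \Cref{prop:deriv-histop}. The key observation is that the derivatives of both the high-order and the low-order interpolants are themselves histopolants of one and the same vector of average values, so the claim will follow by applying the histopolation equivalence to that single vector.

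First I would set $u_p = \mathcal{I}_p(\mathsf{u}) \in \mathcal{Q}_p$ and $u_h = \mathcal{I}_h(\mathsf{u}) \in V_h$, and define the vector of average slopes $\widehat{\mathsf{m}} = \mathcal{H}_{p-1}^{-1} u_p' \in \mathbb{R}^p$. By \Cref{prop:deriv-histop} we have $\mathcal{H}_{p-1}^{-1} u_p' = \mathcal{H}_h^{-1} u_h'$, so this same vector also satisfies $\widehat{\mathsf{m}} = \mathcal{H}_h^{-1} u_h'$. Equivalently, $u_p' = \mathcal{H}_{p-1}(\widehat{\mathsf{m}})$ and $u_h' = \mathcal{H}_h(\widehat{\mathsf{m}})$: the two derivatives are the high-order and low-order histopolants of the identical data $\widehat{\mathsf{m}}$. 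Concretely, as noted in the discussion preceding \Cref{prop:deriv-histop}, $\widehat{\mathsf{m}}_i = (\mathsf{u}_{i+1} - \mathsf{u}_i)/(x_{i+1} - x_i)$ is the common mean value of both $u_p'$ and $u_h'$ over the $i$th subinterval.

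With both derivatives expressed as histopolants of the same $\widehat{\mathsf{m}}$, I would then invoke the histopolation $L^2$-equivalence \eqref{eq:1d-l2-histop-equiv} directly:
\[
   \| (\mathcal{I}_h \mathsf{u})' \|_0^2 = \| \mathcal{H}_h \widehat{\mathsf{m}} \|_0^2 \approx \| \mathcal{H}_{p-1} \widehat{\mathsf{m}} \|_0^2 = \| (\mathcal{I}_p \mathsf{u})' \|_0^2,
\]
where the constants of equivalence are independent of $p$. This is precisely the asserted seminorm equivalence.

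There is essentially no obstacle here: the corollary is a direct composition of \Cref{prop:deriv-histop}, which identifies both derivatives as histopolants of a common vector, and \Cref{prop:1d-l2-equiv}, which bounds their $L^2$-norms against one another. The only point requiring care is confirming that the \emph{same} vector $\widehat{\mathsf{m}}$ drives both histopolants --- that is, that the derivative operators on $\mathcal{Q}_p$ and on $V_h$ are genuinely intertwined with the histopolation operators as the commuting diagram asserts --- but this is exactly the content of \Cref{prop:deriv-histop}, so no further argument is needed.
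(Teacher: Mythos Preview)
Your proposal is correct and follows essentially the same argument as the paper: define the common vector of subinterval averages via \Cref{prop:deriv-histop}, recognize both derivatives as the high- and low-order histopolants of that vector, and conclude by the histopolation $L^2$-equivalence \eqref{eq:1d-l2-histop-equiv} from \Cref{prop:1d-l2-equiv}. The only difference is cosmetic---you call the vector $\widehat{\mathsf{m}}$ rather than $\widehat{\mathsf{u}}$ and spell out the slope formula explicitly---but the logical structure is identical.
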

\begin{proof}
   Let $\widehat{\mathsf{u}} = \mathcal{H}_{p-1}^{-1} u_p' = \mathcal{H}_h^{-1} u_h'$ (by \Cref{prop:deriv-histop}).
   Then, $\left(\mathcal{I}_h \mathsf{u}\right)' = \mathcal{H}_h \widehat{\mathsf{u}}$ and $\left(\mathcal{I}_{p} \mathsf{u}\right)' = \mathcal{H}_{p-1} \widehat{\mathsf{u}}$, and so the result follows from \Cref{prop:1d-l2-equiv}.
\end{proof}

\section{Equivalences in multiple dimensions} \label{sec:multi-d}

In \Cref{sec:1d-lor-equiv}, the norm equivalence for the one-dimensional high-order and low-order interpolation and differentiation operators was established.
In this section, we extend the construction of interpolation and histopolation operators to multiple dimensions using a tensor-product construction.
Additionally, we prove the analogous norm equivalence properties for the high-order and low-order interpolation, gradient, curl, and divergence operators.

The interpolation operator on the $d$-dimensional reference element $\refinterval^d$ is defined by
\[
   \mathcal{I}_p^d : \mathbb{R}^{(p+1)^d} \to \mathcal{Q}_p(\refinterval^d),
   \qquad
   \mathcal{I}_p^d = \mathcal{I}_p \otimes \cdots \otimes \mathcal{I}_p.
\]
This operator maps point values defined on the Cartesian product of nodal points to their unique multivariate interpolating polynomial.
The low-order piecewise multilinear interpolation operator $\mathcal{I}_h^d$ can be defined analogously, and similarly for the histopolation operators $\mathcal{H}_{p-1}$ and $\mathcal{H}_h$.

\begin{prop} \label{prop:l2-equiv}
   Let $\mathsf{u} \in \mathbb{R}^{(p+1)^d}$ and $\widehat{\mathsf{u}} \in \mathbb{R}^{p^d}$.
   Then, $\| \mathcal{I}_h^d \mathsf{u} \|_0^2 \approx \| \mathcal{I}_p^d \mathsf{u} \|_0^2$ and $\| \mathcal{H}_h^d \widehat{\mathsf{u}} \|_0^2 \approx \| \mathcal{H}_{p-1}^d \widehat{\mathsf{u}} \|_0^2$.
\end{prop}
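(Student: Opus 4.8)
The plan is to reduce the $d$-dimensional equivalences to the one-dimensional ones in \Cref{prop:1d-l2-equiv} by recasting everything in terms of mass matrices and exploiting the Kronecker-product structure of the tensor-product operators. First I would observe that, because $\mathcal{I}_p$ and $\mathcal{I}_h$ (and likewise $\mathcal{H}_{p-1}$ and $\mathcal{H}_h$) act on coefficient vectors attached to a common set of nodes (resp.\ subintervals), the squared $L^2$ norms are quadratic forms in the \emph{same} vector $\mathsf{u}$:
\[
   \| \mathcal{I}_p \mathsf{u} \|_0^2 = \mathsf{u}^\top M_p \mathsf{u}, \qquad \| \mathcal{I}_h \mathsf{u} \|_0^2 = \mathsf{u}^\top M_h \mathsf{u},
\]
where $M_p$ and $M_h$ are the mass matrices of the high-order nodal basis and the hat-function basis. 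In this language, the one-dimensional equivalence \eqref{eq:1d-l2-equiv} is exactly the spectral equivalence $M_h \sim M_p$, i.e.\ $c\,M_p \preceq M_h \preceq C\,M_p$ in the Loewner order, with $c,C$ independent of $p$; and \eqref{eq:1d-l2-histop-equiv} reads $\widehat{M}_h \sim \widehat{M}_{p-1}$ for the histopolatory mass matrices.

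The second step is to record that the tensor-product construction makes the $d$-dimensional mass matrix a $d$-fold Kronecker product of its one-dimensional counterpart. Since the reference-cube basis functions factor as products of the univariate ones and the $L^2(\refinterval^d)$ inner product factors accordingly, $\| \mathcal{I}_p^d \mathsf{u} \|_0^2 = \mathsf{u}^\top M_p^{(d)} \mathsf{u}$ with $M_p^{(d)} = M_p \otimes \cdots \otimes M_p$ ($d$ factors), and likewise $M_h^{(d)} = M_h \otimes \cdots \otimes M_h$; the analogous statements hold for the histopolation matrices. The proposition therefore reduces to showing that spectral equivalence of the one-dimensional factors is inherited by their Kronecker products.

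This last reduction is the crux, and I would handle it with a simultaneous-diagonalization argument. Writing a Cholesky factorization $M_p = LL^\top$ and setting $\tilde M = L^{-1} M_h L^{-\top}$, the bound $c\,M_p \preceq M_h \preceq C\,M_p$ becomes $c I \preceq \tilde M \preceq C I$, so the symmetric matrix $\tilde M$ has all eigenvalues in $[c,C]$. The generalized eigenvalues of the pair $(M_h^{(d)}, M_p^{(d)})$ are then the eigenvalues of $\tilde M \otimes \cdots \otimes \tilde M$, and since the eigenvalues of a Kronecker product are the products of the eigenvalues of its factors, they all lie in $[c^d, C^d]$. This yields $c^d M_p^{(d)} \preceq M_h^{(d)} \preceq C^d M_p^{(d)}$, and because $d$ is fixed while $c,C$ are independent of $p$, the resulting constants $c^d, C^d$ remain independent of $p$ (and of $h$). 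The identical argument applied to $\widehat{M}_h \sim \widehat{M}_{p-1}$ gives the histopolation equivalence. The one point requiring care is this monotonicity-under-Kronecker-product step; I expect it to be the only nontrivial part, and the diagonalization above is precisely what makes it clean.
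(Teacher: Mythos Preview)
Your argument is correct and is precisely the approach the paper takes: the paper's proof is the single sentence ``The proof is an immediate consequence of \Cref{prop:1d-l2-equiv} and properties of the tensor product,'' and your mass-matrix/Kronecker-product computation is exactly what unpacks that sentence. The simultaneous-diagonalization step is a clean way to justify the monotonicity of the Loewner order under Kronecker products, which is the only ``property of the tensor product'' really needed here.
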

\begin{proof}
   The proof is an immediate consequence of \Cref{prop:1d-l2-equiv} and properties of the tensor product.
\end{proof}

For concreteness and ease of notation, we will focus on the case of $d=3$ for the remainder of this section.
Many of the results presented here are generalizable to the case of arbitrary $d$ in a straightforward manner.

\subsection{Gradient operators}
These interpolation operators naturally give rise to high-order and low-order gradient operators,
\[
   \mathcal{G}_p^d = \nabla \mathcal{I}_p^d,
   \qquad\text{and}\qquad
   \mathcal{G}_h^d = \nabla \mathcal{I}_h^d.
\]
The norm equivalences of the previous section can be extended to show the equivalence of the high-order and low-order gradient operators.

\begin{prop} \label{prop:grad-equiv}
   Let $\mathsf{u} \in \mathbb{R}^{(p+1)^d}$.
   Then, $\| \mathcal{G}_h^d \mathsf{u} \|_0^2 \approx \| \mathcal{G}_p^d \mathsf{u} \|_0^2$.
\end{prop}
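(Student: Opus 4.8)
The goal is to prove $\|\mathcal{G}_h^d \mathsf{u}\|_0^2 \approx \|\mathcal{G}_p^d \mathsf{u}\|_0^2$ in 3D.

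The plan. The gradient has three components, $\mathcal{G}_p^d \mathsf{u} = (\partial_1 \mathcal{I}_p^d \mathsf{u}, \partial_2 \mathcal{I}_p^d \mathsf{u}, \partial_3 \mathcal{I}_p^d \mathsf{u})$, and the squared $L^2$-norm splits as a sum over components. So it suffices to establish the equivalence component-by-component, e.g. $\|\partial_1 \mathcal{I}_h^d \mathsf{u}\|_0^2 \approx \|\partial_1 \mathcal{I}_p^d \mathsf{u}\|_0^2$, and sum the three resulting equivalences. The key idea is that differentiation acts on only one tensor factor. Using the tensor-product structure $\mathcal{I}_p^d = \mathcal{I}_p \otimes \mathcal{I}_p \otimes \mathcal{I}_p$, the first partial derivative is $\partial_1 \mathcal{I}_p^3 = (\partial \mathcal{I}_p) \otimes \mathcal{I}_p \otimes \mathcal{I}_p$. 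By the commuting diagram of \Cref{prop:deriv-histop}, $\partial \mathcal{I}_p = \mathcal{H}_{p-1} \mathcal{H}_h^{-1} \mathcal{I}_h \mathcal{I}_p^{-1} \cdot(\cdots)$; more cleanly, the proposition tells us that interpolating-then-differentiating equals histopolating the finite differences, so $\partial_1 \mathcal{I}_p^3 \mathsf{u} = (\mathcal{H}_{p-1} \otimes \mathcal{I}_p \otimes \mathcal{I}_p) \mathsf{w}$ for a suitable vector $\mathsf{w}$ of mean-value/finite-difference data, and analogously $\partial_1 \mathcal{I}_h^3 \mathsf{u} = (\mathcal{H}_h \otimes \mathcal{I}_h \otimes \mathcal{I}_h)\mathsf{w}$ with the \emph{same} $\mathsf{w}$.

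The main step. With both partials written in this mixed tensor form acting on a common data vector $\mathsf{w}$, the component equivalence reduces to a tensor-product equivalence in which the first factor uses the histopolation pair $(\mathcal{H}_h, \mathcal{H}_{p-1})$ and the remaining two factors use the interpolation pair $(\mathcal{I}_h, \mathcal{I}_p)$. The $L^2$-norm of a tensor-product operator applied to data is governed by the associated Gram (mass) matrices, which themselves factor as tensor products of the one-dimensional mass matrices $M_h = \mathcal{H}_h^* \mathcal{H}_h$, $M_p = \mathcal{H}_{p-1}^* \mathcal{H}_{p-1}$ and their interpolatory counterparts. \Cref{prop:1d-l2-equiv} gives the spectral equivalence $M_h \sim M_p$ for the histopolation factor and likewise for the interpolation factors, and spectral equivalence of symmetric positive-definite matrices is preserved under tensor products (if $A_i \sim B_i$ then $\bigotimes_i A_i \sim \bigotimes_i B_i$, with the product of the equivalence constants). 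Applying this to the three-factor tensor product yields the component equivalence, and summing over the three components gives the result.

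The main obstacle. The conceptual substance is the first reduction: one must cleanly invoke \Cref{prop:deriv-histop} in tensor-product form to see that the \emph{same} data vector $\mathsf{w}$ appears for both the high-order and low-order partials—this is exactly the content of \Cref{rem:interp-histop}, that differentiation turns interpolated point-data into histopolated mean-value data. Once that identification is made, the remaining work is the routine but essential observation that tensor products preserve spectral equivalence of the one-dimensional mass matrices supplied by \Cref{prop:1d-l2-equiv}. The only mild care needed is bookkeeping to ensure each of the three partial derivatives is handled by permuting which tensor factor carries the histopolation pair, and that the equivalence constants, being products of three $p$-independent one-dimensional constants, remain independent of $p$.
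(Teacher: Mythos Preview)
Your proposal is correct and follows essentially the same approach as the paper. The paper writes the relationship as a factorization $\mathcal{G}_h^3 = T\,\mathcal{G}_p^3$ with $T$ a block-diagonal tensor product of the transfer operators $\mathcal{H}_h\mathcal{H}_{p-1}^{-1}$ and $\mathcal{I}_h\mathcal{I}_p^{-1}$, whereas you phrase it as both operators acting on a common data vector $\mathsf{w}$; these are equivalent formulations, and both conclude by invoking \Cref{prop:deriv-histop} for the commutation and \Cref{prop:1d-l2-equiv} plus tensor-product stability for the norm equivalence.
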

\begin{proof}
   For ease of notation, we write the proof here for $d=3$, although the extension to arbitrary $d$ is straightforward.
   Note that $\mathcal{G}_p^3$ and $\mathcal{G}_h^3$ are given by
   \[
      \mathcal{G}_p^3 = \nabla \mathcal{I}_p^3 = \left(\begin{array}{c}
         \left( \partial \mathcal{I}_p \right) \otimes \mathcal{I}_p \otimes \mathcal{I}_p \\
         \mathcal{I}_p \otimes \left( \partial \mathcal{I}_p \right) \otimes \mathcal{I}_p \\
         \mathcal{I}_p \otimes \mathcal{I}_p \otimes \left( \partial \mathcal{I}_p \right)
      \end{array} \right),
      \qquad
      \mathcal{G}_h^3 = \nabla \mathcal{I}_h^3 = \left(\begin{array}{c}
         \left( \partial \mathcal{I}_h \right) \otimes \mathcal{I}_h \otimes \mathcal{I}_h \\
         \mathcal{I}_h \otimes \left( \partial \mathcal{I}_h \right) \otimes \mathcal{I}_h \\
         \mathcal{I}_h \otimes \mathcal{I}_h \otimes \left( \partial \mathcal{I}_h \right)
      \end{array} \right).
   \]
   By \Cref{prop:deriv-histop}, we have $\partial \mathcal{I}_h = \mathcal{H}_h \mathcal{H}_{p-1}^{-1} \partial \mathcal{I}_p$, and therefore can write
   \[
      \mathcal{G}_h^3 = \left(\begin{array}{ccc}
         \mathcal{H}_h \mathcal{H}_{p-1}^{-1} \otimes \mathcal{I}_h\mathcal{I}_p^{-1} \otimes \mathcal{I}_h\mathcal{I}_p^{-1} & 0 & 0\\
         0 & \mathcal{I}_h\mathcal{I}_p^{-1} \otimes \mathcal{H}_h \mathcal{H}_{p-1}^{-1} \otimes \mathcal{I}_h\mathcal{I}_p^{-1} & 0\\
         0 & 0 & \mathcal{I}_h\mathcal{I}_p^{-1} \otimes \mathcal{I}_h\mathcal{I}_p^{-1} \otimes \mathcal{H}_h \mathcal{H}_{p-1}^{-1}
      \end{array}\right)
      \mathcal{G}_p^3.
   \]
   From \Cref{prop:1d-l2-equiv}, we have that $\| \mathcal{I}_h \mathcal{I}_p^{-1} u \|^2 \approx \| u \|_0^2$ for all $u \in \mathcal{Q}_p$ and $\| \mathcal{H}_h \mathcal{H}_{p-1}^{-1} u \|^2 \approx \| v \|_0^2$ for all $v \in \mathcal{Q}_{p-1}$.
   The result then follows from properties of the tensor product.
\end{proof}

\subsection{Curl operators}
In light of \Cref{rem:interp-histop} and the mapping $H^1 \xrightarrow{\grad} \Hcurl$, it is natural to represent discrete functions in $\Hcurl$ using the histopolation and interpolation operators
\begin{gather*}
   \mathcal{I}_p^{\curl} = \left(\begin{array}{ccc}
      \mathcal{H}_{p-1} \otimes \mathcal{I}_p \otimes \mathcal{I}_p & 0 & 0 \\
      0 & \mathcal{I}_p \otimes \mathcal{H}_{p-1} \otimes \mathcal{I}_p & 0 \\
      0 & 0 & \mathcal{I}_p \otimes \mathcal{I}_p \otimes \mathcal{H}_{p-1}
   \end{array}\right), \\
   \mathcal{I}_h^{\curl} = \left(\begin{array}{ccc}
      \mathcal{H}_{h} \otimes \mathcal{I}_h \otimes \mathcal{I}_h & 0 & 0 \\
      0 & \mathcal{I}_h \otimes \mathcal{H}_{h} \otimes \mathcal{I}_h & 0 \\
      0 & 0 & \mathcal{I}_h \otimes \mathcal{I}_h \otimes \mathcal{H}_{h}
   \end{array}\right).
\end{gather*}
The high-order and low-order curl operators are then defined naturally as
\[
   \mathcal{C}_p = \nabla \times \mathcal{I}_p^{\curl},
   \qquad\text{and}\qquad
   \mathcal{C}_h = \nabla \times \mathcal{I}_h^{\curl}.
\]
\begin{prop} \label{prop:curl-equiv}
   Let $\mathsf{u} \in \mathbb{R}^{p (p+1)^2}$.
   Then, $\| \mathcal{I}_p^{\curl} \mathsf{u} \|_0^2 \approx \| \mathcal{I}_h^{\curl} \mathsf{u} \|_0^2$ and $ \| \mathcal{C}_h \mathsf{u} \|_0^2 \approx \| \mathcal{C}_p \mathsf{u} \|_0^2$.
\end{prop}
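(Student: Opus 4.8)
The plan is to treat the two equivalences separately, reducing each to a block-diagonal, tensor-product argument of exactly the type used in the proofs of \Cref{prop:l2-equiv,prop:grad-equiv}. The first equivalence, $\| \mathcal{I}_p^{\curl} \mathsf{u} \|_0^2 \approx \| \mathcal{I}_h^{\curl} \mathsf{u} \|_0^2$, is the more routine of the two: since both operators are block-diagonal across the three vector components, and the $L^2$-norm of a vector field is the sum of the $L^2$-norms of its components, it suffices to establish the equivalence one block at a time. For this I would write $\mathcal{I}_h^{\curl} = M \mathcal{I}_p^{\curl}$, where $M$ is block-diagonal with diagonal blocks
\[
   (\mathcal{H}_h \mathcal{H}_{p-1}^{-1}) \otimes (\mathcal{I}_h \mathcal{I}_p^{-1}) \otimes (\mathcal{I}_h \mathcal{I}_p^{-1}), \quad \ldots
\]
(and its two cyclic permutations). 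Each tensor factor is one of the one-dimensional operators $\mathcal{I}_h \mathcal{I}_p^{-1}$ or $\mathcal{H}_h \mathcal{H}_{p-1}^{-1}$, which by \Cref{prop:1d-l2-equiv} satisfy $\| \mathcal{I}_h \mathcal{I}_p^{-1} u \|_0^2 \approx \| u \|_0^2$ and $\| \mathcal{H}_h \mathcal{H}_{p-1}^{-1} v \|_0^2 \approx \| v \|_0^2$. The standard tensor-product argument then shows each block of $M$ preserves the $L^2$-norm up to $p$-independent constants; summing over the three blocks gives the first claim.

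The curl equivalence is the crux, and the key is to exhibit a single coefficient vector that drives both the high- and low-order curls. From the discussion preceding \Cref{prop:deriv-histop}, the one-dimensional derivatives factor as $\partial \mathcal{I}_p = \mathcal{H}_{p-1} \mathsf{B}$ and $\partial \mathcal{I}_h = \mathcal{H}_h \mathsf{B}$, where $\mathsf{B} : \mathbb{R}^{p+1} \to \mathbb{R}^p$ is the finite-difference (mean-slope) matrix $(\mathsf{B}\mathsf{u})_i = (\mathsf{u}_{i+1} - \mathsf{u}_i)/(x_{i+1} - x_i)$, and crucially $\mathsf{B}$ is the \emph{same} for the high- and low-order operators. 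Applying $\nabla\times$ componentwise to $\mathcal{I}_p^{\curl}\mathsf{u}$ and substituting $\partial \mathcal{I}_p = \mathcal{H}_{p-1}\mathsf{B}$ in each slot where a derivative falls on an interpolation factor, the first component becomes
\[
   (\mathcal{C}_p \mathsf{u})_1 = (\mathcal{I}_p \otimes \mathcal{H}_{p-1} \otimes \mathcal{H}_{p-1}) \mathsf{c}^1, \qquad \mathsf{c}^1 = (\mathrm{Id} \otimes \mathsf{B} \otimes \mathrm{Id})\mathsf{u}^3 - (\mathrm{Id} \otimes \mathrm{Id} \otimes \mathsf{B})\mathsf{u}^2,
\]
and analogously for the other two components. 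Because $\partial \mathcal{I}_h$ uses the \emph{same} $\mathsf{B}$, the low-order curl is given by the identical coefficient vector $\mathsf{c} = (\mathsf{c}^1, \mathsf{c}^2, \mathsf{c}^3)$, i.e.\ $(\mathcal{C}_h \mathsf{u})_1 = (\mathcal{I}_h \otimes \mathcal{H}_h \otimes \mathcal{H}_h)\mathsf{c}^1$, and cyclically.

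This identifies $\mathcal{C}_p \mathsf{u} = \mathcal{I}_p^{\div}\mathsf{c}$ and $\mathcal{C}_h \mathsf{u} = \mathcal{I}_h^{\div}\mathsf{c}$, where $\mathcal{I}_p^{\div}$ and $\mathcal{I}_h^{\div}$ are the Raviart--Thomas-type interpolation operators, block-diagonal with blocks $\mathcal{I}_p \otimes \mathcal{H}_{p-1} \otimes \mathcal{H}_{p-1}$ and its cyclic permutations (and their low-order analogues), obtained from $\mathcal{I}_p^{\curl}$ by interchanging the roles of interpolation and histopolation. The curl equivalence then reduces to $\| \mathcal{I}_p^{\div}\mathsf{c} \|_0^2 \approx \| \mathcal{I}_h^{\div}\mathsf{c} \|_0^2$, which is settled by precisely the block-diagonal tensor-product argument used for the first equivalence. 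The main obstacle is the bookkeeping in the middle step: one must carefully track, for each of the three curl components, which tensor slot receives $\mathsf{B}$, which receives an interpolation factor, and which receives a histopolation factor, and verify that the two terms of each curl component lie in the same space (here $\mathcal{Q}_{p,p-1,p-1}$ for the first component) so that the common factored form $(\mathcal{I}_p \otimes \mathcal{H}_{p-1} \otimes \mathcal{H}_{p-1})\mathsf{c}^1$ is legitimate. Once the shared coefficient vector $\mathsf{c}$ has been extracted, the norm equivalence is immediate from \Cref{prop:1d-l2-equiv} and properties of the tensor product.
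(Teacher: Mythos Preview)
Your proof is correct and is essentially the paper's argument. The paper writes $\mathcal{C}_p$ and $\mathcal{C}_h$ out explicitly and observes directly that $\mathcal{C}_h = N\,\mathcal{C}_p$ with $N$ block-diagonal having blocks $\mathcal{I}_h\mathcal{I}_p^{-1}\otimes\mathcal{H}_h\mathcal{H}_{p-1}^{-1}\otimes\mathcal{H}_h\mathcal{H}_{p-1}^{-1}$ and its cyclic permutations---which is exactly your $\mathcal{I}_h^{\div}(\mathcal{I}_p^{\div})^{-1}$; your factorization through the shared discrete-curl coefficient vector $\mathsf{c}$ (via $\partial\mathcal{I}_p=\mathcal{H}_{p-1}\mathsf{B}$, $\partial\mathcal{I}_h=\mathcal{H}_h\mathsf{B}$) is the same relation unpacked one step further to make the Raviart--Thomas range explicit.
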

\begin{proof}
   The first equivalence is a simple consequence of \Cref{prop:1d-l2-equiv}.
   To show the second equivalence, we write the operators $\mathcal{C}_p$ and $\mathcal{C}_h$ explicitly as
   \begin{gather*}
      \mathcal{C}_p = \left(\begin{array}{ccc}
         0 &
         -\mathcal{I}_p \otimes \mathcal{H}_{p-1} \otimes (\partial \mathcal{I}_p) &
         \mathcal{I}_p \otimes (\partial \mathcal{I}_p) \otimes \mathcal{H}_{p-1} \\
      %%%%
         \mathcal{H}_{p-1} \otimes \mathcal{I}_p \otimes (\partial \mathcal{I}_p) &
         0 &
         -(\partial \mathcal{I}_p) \otimes \mathcal{I}_p \otimes \mathcal{H}_{p-1} \\
      %%%%
         -\mathcal{H}_{p-1} \otimes (\partial \mathcal{I}_p) \otimes \mathcal{I}_p &
         (\partial \mathcal{I}_p) \otimes \mathcal{H}_{p-1} \otimes \mathcal{I}_p &
         0
      \end{array}\right), \\
      \mathcal{C}_h = \left(\begin{array}{ccc}
         0 &
         -\mathcal{I}_h \otimes \mathcal{H}_{h} \otimes (\partial \mathcal{I}_h) &
         \mathcal{I}_h \otimes (\partial \mathcal{I}_h) \otimes \mathcal{H}_{h} \\
      %%%%
         \mathcal{H}_{h} \otimes \mathcal{I}_h \otimes (\partial \mathcal{I}_h) &
         0 &
         -(\partial \mathcal{I}_h) \otimes \mathcal{I}_h \otimes \mathcal{H}_{h} \\
      %%%%
         -\mathcal{H}_{h} \otimes (\partial \mathcal{I}_h) \otimes \mathcal{I}_h &
         (\partial \mathcal{I}_h) \otimes \mathcal{H}_{h} \otimes \mathcal{I}_h &
         0
      \end{array}\right),
   \end{gather*}
   from which the relation
   \[
      \mathcal{C}_h
      =
      \left(\begin{array}{ccc}
         \mathcal{I}_h \mathcal{I}_p^{-1} \otimes \mathcal{H}_h \mathcal{H}_{p-1}^{-1} \otimes \mathcal{H}_h \mathcal{H}_{p-1}^{-1} & 0 & 0 \\
         0 & \mathcal{H}_h \mathcal{H}_{p-1}^{-1} \otimes \mathcal{I}_h \mathcal{I}_p^{-1} \otimes \mathcal{H}_h \mathcal{H}_{p-1}^{-1} & 0 \\
         0 & 0 & \mathcal{H}_h \mathcal{H}_{p-1}^{-1} \otimes \mathcal{H}_h \mathcal{H}_{p-1}^{-1} \otimes \mathcal{I}_h \mathcal{I}_p^{-1}
      \end{array}\right)
      \mathcal{C}_p
   \]
   is clear.
   The conclusion then follows as in the proof of \Cref{prop:grad-equiv}.
\end{proof}

\subsection{Divergence operators}
Given the mapping $\Hcurl \xrightarrow{\curl} \Hdiv$, the interpolation operators in $\Hdiv$ are naturally given by
\begin{gather*}
   \mathcal{I}_p^{\div} = \left(\begin{array}{ccc}
      \mathcal{I}_p \otimes \mathcal{H}_{p-1} \otimes \mathcal{H}_{p-1} & 0 & 0 \\
      0 & \mathcal{H}_{p-1} \otimes \mathcal{I}_p \otimes \mathcal{H}_{p-1} & 0 \\
      0 & 0 & \mathcal{H}_{p-1} \otimes \mathcal{H}_{p-1} \otimes \mathcal{I}_p
   \end{array}\right), \\
   \mathcal{I}_h^{\div} = \left(\begin{array}{ccc}
      \mathcal{I}_{h} \otimes \mathcal{H}_h \otimes \mathcal{H}_h & 0 & 0 \\
      0 & \mathcal{H}_h \otimes \mathcal{I}_{h} \otimes \mathcal{H}_h & 0 \\
      0 & 0 & \mathcal{H}_h \otimes \mathcal{H}_h \otimes \mathcal{I}_{h}
   \end{array}\right),
\end{gather*}
and the divergence operators by
\[
   \mathcal{D}_p = \nabla \cdot \mathcal{I}_p^{\div},
   \qquad\text{and}\qquad
   \mathcal{D}_h = \nabla \cdot \mathcal{I}_h^{\div}.
\]
\begin{prop} \label{prop:div-equiv}
   Let $\mathsf{u} \in \mathbb{R}^{p^2 (p+1)}$.
   Then, $ \| \mathcal{I}_p^{\div} \mathsf{u} \|_0^2 \approx \| \mathcal{I}_h^{\div} \mathsf{u} \|_0^2 $ and $\| \mathcal{D}_h \mathsf{u} \|_0^2 \approx \| \mathcal{D}_p \mathsf{u} \|_0^2$.
\end{prop}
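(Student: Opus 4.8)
The plan is to mirror the proof of \Cref{prop:curl-equiv}, with the divergence now playing the role that the curl played there. For the first equivalence, $\|\mathcal{I}_p^{\div}\mathsf{u}\|_0^2 \approx \|\mathcal{I}_h^{\div}\mathsf{u}\|_0^2$, I would observe that each diagonal block of $\mathcal{I}_h^{\div}$ arises from the corresponding block of $\mathcal{I}_p^{\div}$ by inserting $\mathcal{I}_h\mathcal{I}_p^{-1}$ in front of the $\mathcal{I}_p$ factor and $\mathcal{H}_h\mathcal{H}_{p-1}^{-1}$ in front of each $\mathcal{H}_{p-1}$ factor, so that $\mathcal{I}_h^{\div}$ equals a block-diagonal operator---whose blocks are threefold tensor products of $\mathcal{I}_h\mathcal{I}_p^{-1}$ and $\mathcal{H}_h\mathcal{H}_{p-1}^{-1}$---applied to $\mathcal{I}_p^{\div}$. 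Since \Cref{prop:1d-l2-equiv} supplies the two-sided bounds $\|\mathcal{I}_h\mathcal{I}_p^{-1}u\|_0^2 \approx \|u\|_0^2$ and $\|\mathcal{H}_h\mathcal{H}_{p-1}^{-1}v\|_0^2 \approx \|v\|_0^2$, this equivalence follows from the same tensor-product argument used for \Cref{prop:l2-equiv}.

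For the second equivalence I would write $\mathcal{D}_p$ and $\mathcal{D}_h$ explicitly as row operators: differentiating the three diagonal blocks of $\mathcal{I}_p^{\div}$ in the $x$, $y$, and $z$ directions and summing gives
\[
   \mathcal{D}_p = \bigl(\,
      (\partial\mathcal{I}_p)\otimes\mathcal{H}_{p-1}\otimes\mathcal{H}_{p-1},\ \
      \mathcal{H}_{p-1}\otimes(\partial\mathcal{I}_p)\otimes\mathcal{H}_{p-1},\ \
      \mathcal{H}_{p-1}\otimes\mathcal{H}_{p-1}\otimes(\partial\mathcal{I}_p)\,
   \bigr),
\]
with $\mathcal{D}_h$ identical up to replacing $\mathcal{I}_p, \mathcal{H}_{p-1}$ by $\mathcal{I}_h, \mathcal{H}_h$. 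The key observation---and what makes the divergence case cleaner than the curl case---is that the conversion factor is a single tensor product rather than a block-diagonal operator. Using \Cref{prop:deriv-histop} in the form $\partial\mathcal{I}_h = \mathcal{H}_h\mathcal{H}_{p-1}^{-1}(\partial\mathcal{I}_p)$ in the derivative slot and $\mathcal{H}_h = \mathcal{H}_h\mathcal{H}_{p-1}^{-1}\mathcal{H}_{p-1}$ in the other two slots, every term of $\mathcal{D}_h$ acquires exactly one factor of $\mathcal{H}_h\mathcal{H}_{p-1}^{-1}$ in each of the three tensor slots, uniformly across all three terms. Hence
\[
   \mathcal{D}_h = \bigl(\mathcal{H}_h\mathcal{H}_{p-1}^{-1}\otimes\mathcal{H}_h\mathcal{H}_{p-1}^{-1}\otimes\mathcal{H}_h\mathcal{H}_{p-1}^{-1}\bigr)\,\mathcal{D}_p,
\]
reflecting the fact that the divergence lands in a single scalar histopolated space, so no block structure is needed in the prefactor.

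The conclusion then follows exactly as in the proof of \Cref{prop:grad-equiv}: the scalar operator $\mathcal{H}_h\mathcal{H}_{p-1}^{-1}$ is bounded above and below in $L^2$ by \Cref{prop:1d-l2-equiv}, and the $L^2$ norm of its threefold tensor product is controlled by the product of the one-dimensional bounds. I do not expect a genuine obstacle here, since the argument is structurally identical to the curl case; the one point requiring care is the algebraic check that a single prefactor factors out uniformly from all three divergence terms, which is precisely where the compatibility of the interpolation nodes with the histopolation subintervals, together with the commuting relation of \Cref{prop:deriv-histop}, does the work.
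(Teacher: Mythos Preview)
Your proposal is correct and follows essentially the same approach as the paper: the paper likewise derives the first equivalence directly from \Cref{prop:1d-l2-equiv}, writes $\mathcal{D}_p$ and $\mathcal{D}_h$ explicitly as the same row operators you give, and observes that $\mathcal{D}_h = \mathcal{H}^d_h (\mathcal{H}^d_{p-1})^{-1} \mathcal{D}_p$ (your single threefold tensor-product prefactor), from which the conclusion follows. Your remark that the divergence case is cleaner than the curl case because a single scalar prefactor suffices is exactly the structural point the paper exploits.
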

\begin{proof}
   The first equivalence is a simple consequence of \Cref{prop:1d-l2-equiv}.
   To show the second equivalence, we write the operators $\mathcal{D}_p$ and $\mathcal{D}_h$ as
   \begin{gather*}
      \mathcal{D}_p = \left(\begin{array}{ccc}
         (\partial \mathcal{I}_p) \otimes \mathcal{H}_{p-1} \otimes \mathcal{H}_{p-1} &
         \mathcal{H}_{p-1} \otimes (\partial \mathcal{I}_p) \otimes \mathcal{H}_{p-1} &
         \mathcal{H}_{p-1} \otimes \mathcal{H}_{p-1} \otimes (\partial \mathcal{I}_p)
      \end{array}\right), \\
      \mathcal{D}_h = \left(\begin{array}{ccc}
         (\partial \mathcal{I}_h) \otimes \mathcal{H}_h \otimes \mathcal{H}_h &
         \mathcal{H}_h \otimes (\partial \mathcal{I}_h) \otimes \mathcal{H}_h &
         \mathcal{H}_h \otimes \mathcal{H}_h \otimes (\partial \mathcal{I}_h)
      \end{array}\right),
   \end{gather*}
   from which it can be seen that $\mathcal{D}_h = \mathcal{H}^d_h (\mathcal{H}^d_{p-1})^{-1} \mathcal{D}_p,$ and the conclusion follows.
\end{proof}

\begin{rem}
   As an immediate consequence of the above propositions, the high-order and low-order gradient, curl, and divergence operators have identical nullspaces.
   This is particularly important for the construction of preconditioners for discretizations in $\Hcurl$ and $\Hdiv$, for which treating the nontrivial nullspaces of the curl and divergence operators is a key challenge.
\end{rem}

\begin{rem}
   The spectral equivalence results of \Crefrange{prop:l2-equiv}{prop:div-equiv} were first proven using explicit calculations in \cite{Dohrmann2021a}.
   In this section, we have provided alternative, systematic proofs of these results based only on the one-dimensional interpolation and histopolation operators.
\end{rem}

\section{Finite element spaces}
\label{sec:fe}

The results of the previous sections can be used to define spectral equivalences of operators defined on $H^1$, $\Hcurl$, $\Hdiv$, and $L^2$ finite element spaces, which make up the discrete de Rham complex in 3D (cf.~\cite{Arnold2006,Arnold2006a}):
\[
   \begin{tikzcd}
      H^1(\Omega) \arrow[r, "\grad"] &
      \HcurlOmega \arrow[r, "\curl"] &
      \HdivOmega \arrow[r, "\div"] &
      L^2(\Omega).
   \end{tikzcd}
\]
We begin by considering a spatial domain $\Omega \subseteq \mathbb{R}^3$ (analogous results also apply in a straightforward manner to domains in one and two spatial dimensions).
The domain is discretized using a hexahedral mesh $\mathcal{T}_p$.
Each element of the mesh $\kappa \in \mathcal{T}_p$ is the image of the reference element $\widehat{\kappa} = \refinterval^3$ under a smooth mapping, $\kappa = T_\kappa (\widehat{\kappa})$.
Given the mesh $\mathcal{T}_p$, we define the following high-order and low-order-refined finite element spaces.

\subsection{High-order spaces}

Recall the standard $H^1$, $\Hcurl$ (\Nedelec), $\Hdiv$ (Raviart--Thomas), and $L^2$ finite element spaces, defined as follows.
Note that in addition to the natural $L^2$ finite element space in the de Rham complex, we also define a discontinuous Galerkin spaces that can be considered as a ``broken $H^1$'' space.

\begin{itemize}
   \item The $H^1$ finite element space
   \[
      \Vp = \{\, v \in H^1(\Omega) : v|_\kappa \circ T_\kappa \in \mathcal{Q}_p(\widehat{\kappa}) \,\}.
   \]
   \item The $\Hcurl$ finite element space with \Nedelec elements
   \[
      \Wp = \{\, \bm{w} \in \Hcurl : \bm{w}|_\kappa \circ T_\kappa \in \Wp(\kappa)  \,\}.
   \]
   The local space $\Wp(\kappa)$ is the image of the reference space $\Wp(\widehat{\kappa}) = \mathcal{Q}_{p-1,p,p} \times \mathcal{Q}_{p,p-1,p} \times \mathcal{Q}_{p,p,p-1}$ under the $\Hcurl$ Piola transformation $\Wp(\kappa) = J_\kappa^{-T} \Wp(\widehat{\kappa})$, where $J_\kappa$ is the Jacobian matrix of the element transformation $T_\kappa$.
   \item The $\Hdiv$ finite element space with Raviart--Thomas elements
   \[
      \Xp = \{\, \bm{x} \in \Hdiv : \bm{w}|_\kappa \circ T_\kappa \in \Xp(\kappa) \  \,\}.
   \]
   The local space $\Xp(\kappa)$ is the image of the reference space $\Xp(\widehat{\kappa}) = \mathcal{Q}_{p,p-1,p-1} \times \mathcal{Q}_{p-1,p,p-1} \times \mathcal{Q}_{p-1,p-1,p}$ under the $\Hdiv$ Piola transformation $\Xp(\kappa) = \det(J_\kappa)^{-1} J_\kappa \Xp(\widehat{\kappa})$, where $J_\kappa$ is the Jacobian matrix of the element transformation $T_\kappa$.
   \item The $L^2$ finite element space
   \[
      \Ypm = \{\, y \in L^2(\Omega) : y|_\kappa \circ T_\kappa \in \Ypm(\kappa)  \,\}.
   \]
   The local space $\Ypm(\kappa)$ is the image of the reference space $\Ypm(\widehat{\kappa}) = \mathcal{Q}_{p-1}$ under the integral-preserving mapping, $\Ypm(\kappa) = \det(J_{\kappa})^{-1} \Ypm(\widehat{\kappa})$, where $J_\kappa$ is the Jacobian matrix of the element transformation $T_\kappa$.
   \item The discontinuous Galerkin space
   \[
      \Zp = \{\, z \in L^2(\Omega) : z|_\kappa \circ T_\kappa \in \mathcal{Q}_{p}(\widehat{\kappa}) \,\}.
   \]
   In contrast to the $L^2$ space $\Yp$, the local DG space does not incorporate the integral-preserving mapping.
   For our purposes, it is more natural to consider the DG space $\Zp$ as a ``broken $H^1$'' finite element space rather than an $L^2$ space.
\end{itemize}

The spaces $\Vp$, $\Wp$, $\Xp$, $\Ypm$ provide a discrete analogue of the $L^2$ de Rham complex, in the sense that
\[
   \begin{tikzcd}
      \Vp \arrow[r, "\grad"] &
      \Wp \arrow[r, "\curl"] &
      \Xp \arrow[r, "\div"] &
      \Ypm
   \end{tikzcd}
\]
is a complete sequence, where the range of each operator is exactly the kernel of the next one, e.g.\ $\curl \bm{w} = \bm{0}$ for $\bm{w}\in\Wp$ if and only if $\bm{w} = \grad v$ for some $v \in \Vp$.

\subsection{Low-order-refined spaces}

For each of the spaces $\Vp, \Wp, \Xp, \Ypm, \Zp$, the corresponding low-order-refined spaces $\Vh, \Wh, \Xh, \Yh, \Zp$ are given by the lowest-order finite element spaces, defined on a Gauss--Lobatto refined mesh.
This refined mesh, denoted $\mathcal{T}_h$, is obtained by refining each element $\kappa \in \mathcal{T}_p$ as follows.
Let $\{ x_i \}_{i=1}^{p+1}$ denote the $p+1$ Gauss--Lobatto points in $\refinterval$, and let $\{ \bm{x}_i \}_{i=1}^{(p+1)^3}$ denote their 3-fold Cartesian product in $\refinterval^3$.
The points $\bm{x}_i$ define a submesh of the element $\kappa$, consisting of $p^3$ subelements.
This submesh is structured (Cartesian), but nonuniform because of the clustering of the Gauss--Lobatto points at the endpoints of the interval.
The $H^1$ space $\Vh$ is the space of piecewise trilinear functions defined on $\mathcal{T}_h$, with degrees of freedom given by vertex values.
The $\Hcurl$ space $\Wh$ is the space of lowest-order edge elements, and the $\Hdiv$ space $\Xh$ is the space of lowest-order face elements defined on $\mathcal{T}_h$.
The $L^2$ space $\Yh$ is the space of piecewise constants defined on $\mathcal{T}_h$.
Similarly, the low-order DG space $\Zh$ (corresponding to the degree-$p$ DG space $\Zp$) is the space of piecewise constants defined on the mesh $\mathcal{T}_h'$ obtained by refining $\mathcal{T}_p$ using $p+2$ Gauss--Lobatto points.
An illustration of a low-order-refined hexahedral mesh is shown in \Cref{fig:fichera}.

\begin{figure}
   \centering
   \includegraphics[width=1.5in]{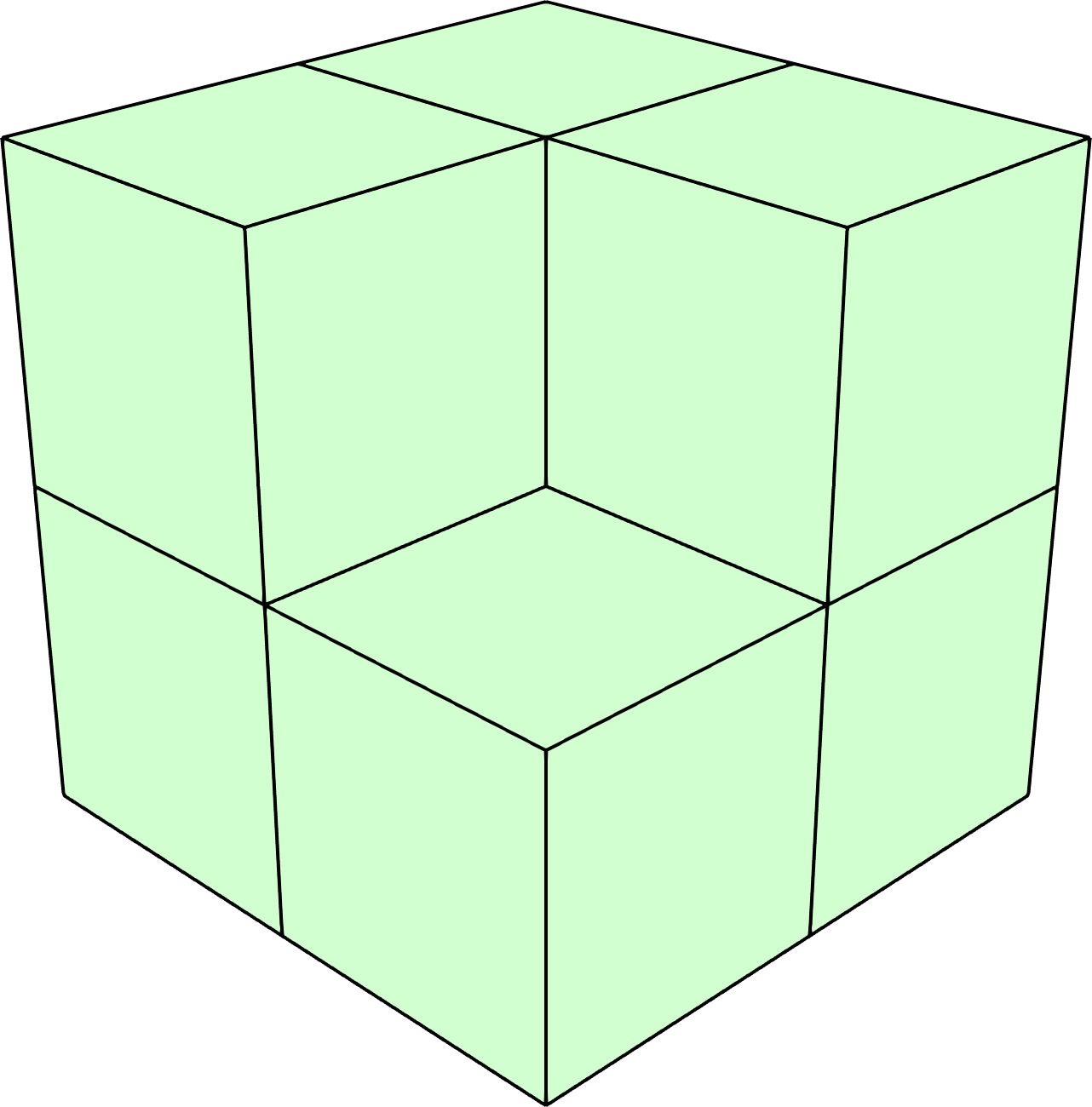}
   \hspace{0.25in}
   \includegraphics[width=1.5in]{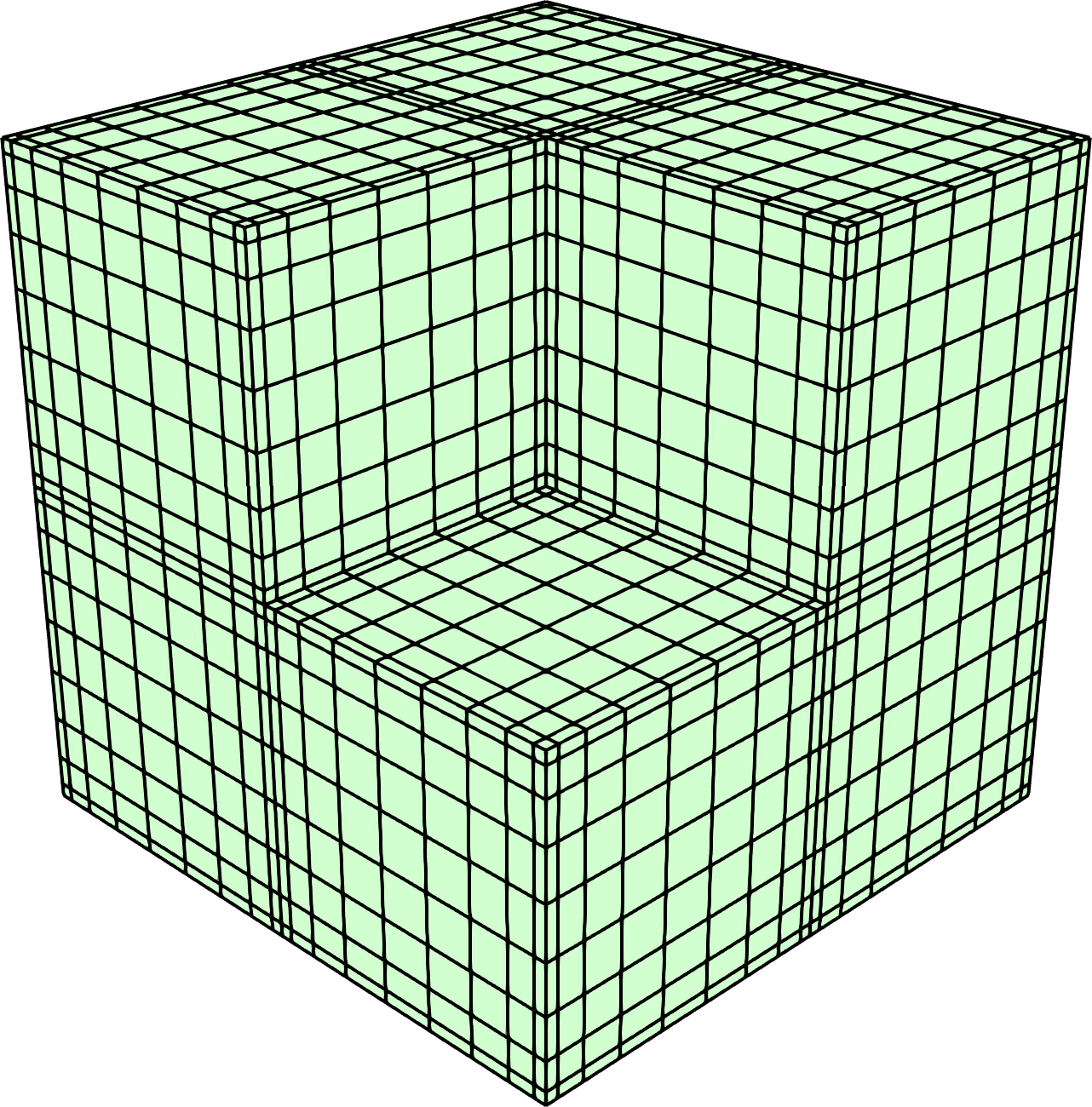}
   \caption{
      Hexahedral mesh of the Fichera corner (left), and its low-order-refined counterpart corresponding to $p=9$ (right).
   }
   \label{fig:fichera}
\end{figure}

\subsection{Choice of basis for the high-order spaces} \label{sec:basis}

To obtain the high-order--low-order-refined equivalences between the finite element spaces, we make use of bases built using the interpolation and histopolation operators from \Cref{sec:interp-histop,sec:multi-d}.
These bases are closely related to the so-called \textit{mimetic bases} introduced by Gerritsma and colleagues in \cite{Gerritsma2010,Kreeft2011,Zhang2018}, and studied in the context of high-order--low-order equivalence in \cite{Dohrmann2021a}.

Recall the multi-dimensional interpolation and histopolation operators defined in \Cref{sec:multi-d},
\begin{align*}
   \mathcal{I}_p^3 &= \mathcal{I}_p \otimes \mathcal{I}_p \otimes \mathcal{I}_p, \\
   \mathcal{I}_p^{\curl} &= \left(\begin{array}{ccc}
      \mathcal{H}_{p-1} \otimes \mathcal{I}_p \otimes \mathcal{I}_p & 0 & 0 \\
      0 & \mathcal{I}_p \otimes \mathcal{H}_{p-1} \otimes \mathcal{I}_p & 0 \\
      0 & 0 & \mathcal{I}_p \otimes \mathcal{I}_p \otimes \mathcal{H}_{p-1}
   \end{array}\right), \\
   \mathcal{I}_p^{\div} &= \left(\begin{array}{ccc}
      \mathcal{I}_p \otimes \mathcal{H}_{p-1} \otimes \mathcal{H}_{p-1} & 0 & 0 \\
      0 & \mathcal{H}_{p-1} \otimes \mathcal{I}_p \otimes \mathcal{H}_{p-1} & 0 \\
      0 & 0 & \mathcal{H}_{p-1} \otimes \mathcal{H}_{p-1} \otimes \mathcal{I}_p
   \end{array}\right), \\
   \mathcal{H}_{p-1}^3 &= \mathcal{H}_{p-1} \otimes \mathcal{H}_{p-1} \otimes \mathcal{H}_{p-1}.
\end{align*}
The images of the Cartesian basis vectors $\mathsf{e}^i$ (where $\mathsf{e}_j^i = \delta_{ij}$) under each of the above mappings naturally define basis functions for the corresponding space of polynomials.

\begin{itemize}
   \item The basis functions defined by $\mathcal{I}_p^3 : \mathbb{R}^{(p+1)^3} \to \mathcal{Q}_p$ are the standard nodal (Lagrange) basis functions corresponding to the Gauss--Lobatto points.
   By identifying coincident nodal points at element interfaces, the resulting functions are continuous across element interfaces, giving a basis for $\Vp$.
   In this case, the degrees of freedom are point values at the nodes.
   \item The basis functions defined by $\mathcal{I}_p^{\curl} : \mathbb{R}^{p(p+1)^2} \to \mathcal{Q}_{p-1,p,p} \times \mathcal{Q}_{p,p-1,p} \times \mathcal{Q}_{p,p,p-1}$ are used to define $\Hcurl$ basis functions.
   In this case, the degrees of freedom of the $i$th vector component are integrals over the segment connecting two neighboring nodes in the $i$th coordinate direction.
   This naturally results in tangential continuity, ensuring that the resulting piecewise polynomial functions form a basis for $\Wp$.
   \item The basis functions defined by $\mathcal{I}_p^{\div} : \mathbb{R}^{p^2(p+1)} \to \mathcal{Q}_{p,p-1,p-1} \times \mathcal{Q}_{p-1,p,p-1} \times \mathcal{Q}_{p-1,p-1,p}$ are used to define $\Hdiv$ basis functions.
   The resulting degrees of freedom for the $i$th vector component are interals over the two-dimensional surface defined by neighboring nodes in the two orthogonal coordinate directions.
   This naturally results in normal continuity, ensuring that the resulting piecewise polynomial functions form a basis for $\Xp$.
   \item The basis functions defined by $\mathcal{H}_{p-1}^3 : \mathbb{R}^{p^3} \to \mathcal{Q}_{p-1}$ are used for the $L^2$ space $\Ypm$.
   The resulting degrees of freedom represent integrals over subvolumes defined by the nodal points, enforcing no continuity between elements.
   \item For the discontinuous Galerkin space $\Zp$, the basis functions defined by $\mathcal{I}_{p-1}^3$ are used (as in the case of $H^1$ elements).
   The case of DG spaces is discussed in further detail in \Cref{sec:dg}.
\end{itemize}

\begin{rem}
   The degrees of freedom for the high-order finite element spaces described above coincide exactly with the standard lowest-order degrees of freedom for the low-order-refined spaces.
   Consequently, in the lowest-order cases ($p=1$ for $H^1$, $\Hcurl$, and $\Hdiv$ finite elements, $p=0$ for $L^2$ finite elements), the basis functions described above reduce to the standard basis functions used for the lowest-order finite element spaces, and we recover the standard low-order vertex, edge, face, and element basis functions.
\end{rem}

\subsection{High-order--low-order-refined spectral equivalence}

In this section, we consider the spectral equivalence of the mass and stiffness matrices defined on the finite element spaces $\Vp, \Wp, \Xp,$ and $\Ypm$.
For simplicity, we restrict the analysis in this section to the case where the mesh element transformations $T_\kappa$ have constant Jacobians (in other words, $T_\kappa$ is an affine transformation, and $\kappa$ is a parallelepiped).
The case of more general meshes (and variable coefficients) is studied numerically in \Cref{sec:numerical-results}.

We define the following transfer operators between the high-order and low-order spaces.
\[
   \begin{aligned}
      P_{V} &: \Vp \to \Vh
         \quad&\quad
         P_{V}(v)|_{\kappa} &= \mathcal{I}_h^3 \left( \mathcal{I}_p^3 \right)^{-1} (u|_{\kappa}) \\
      P_{W} &: \Wp \to \Wh
         \quad&\quad
         P_{W}(\bm{w})|_{\kappa} &= \mathcal{I}_h^{\curl} \left( \mathcal{I}_p^{\curl} \right)^{-1} (\bm{w}|_{\kappa}) \\
      P_{X} &: \Xp \to \Xh
         \quad&\quad
         P_{X}(\bm{x})|_{\kappa} &= \mathcal{I}_h^{\div} \left( \mathcal{I}_p^{\div} \right)^{-1} (\bm{x}|_{\kappa}) \\
      P_{Y} &: \Ypm \to \Yh
         \quad&\quad
         P_{Y}(y)|_{\kappa} &= \mathcal{H}_h^3 \left( \mathcal{H}_{p-1}^3 \right)^{-1} (y|_{\kappa}) \\
      P_{Z} &: \Zp \to \Zh
         \quad&\quad
         P_{Z}(z)|_{\kappa} &= \mathcal{I}_h^3 \left( \mathcal{I}_{p}^3 \right)^{-1} (z|_{\kappa}) \\
   \end{aligned}
\]
The definition of these transfer operators (together with the standard commuting projection operators $\Pi_\circ$ \cite{Monk2003} ensure that the following diagram commutes:
\[
   \begin{tikzcd}
      H^1(\Omega) \arrow[d, "\Pi_V"] \arrow[r, "\grad"] &
      \HcurlOmega \arrow[d, "\Pi_W"] \arrow[r, "\curl"] &
      \HdivOmega  \arrow[d, "\Pi_X"] \arrow[r, "\div"] &
      L^2(\Omega) \arrow[d, "\Pi_Y"] \\
      \Vp  \arrow[d, "P_V"] \arrow[r, "\grad"] &
      \Wp  \arrow[d, "P_W"] \arrow[r, "\curl"] &
      \Xp  \arrow[d, "P_X"] \arrow[r, "\div"] &
      \Ypm \arrow[d, "P_Y"] \\
      \Vh \arrow[r, "\grad"] &
      \Wh \arrow[r, "\curl"] &
      \Xh \arrow[r, "\div"] &
      \Yh
   \end{tikzcd}
\]
\begin{rem}
   Note that because of the choice of basis laid out in \Cref{sec:basis}, the matrix representation of each of the above $P$ operators is the identity matrix.
   In other words, the same vector of degrees of freedom represents both element of the high-order finite element space, and its image under the transfer operator in the low-order-refined space.
\end{rem}

These transfer operators result in the following norm and seminorm equivalences, which immediately give the spectral equivalences of the high-order and low-order-refined mass and stiffness matrices.
\begin{thm} \label{thm:norm-equiv}
   It holds that
   \begin{gather}
      \begin{alignat}{4}
         \| v \|_0^2 &\approx \| P_V (v) \|_0^2, \quad&\quad
         | v |_1^2 &\approx | P_V (v) |_1^2 \qquad&&\text{for all $v \in \Vp$},\\
         \| \bm{w} \|_0^2 &\approx \| P_W (\bm{w}) \|_0^2, \quad&\quad
         | \bm{w} |_{\curl}^2 &\approx | P_W (\bm{w}) |_{\curl}^2 \qquad&&\text{for all $\bm{w} \in \Wp$},\\
         \| \bm{x} \|_0^2 &\approx \| P_X (\bm{x}) \|_0^2, \quad&\quad
         | \bm{x} |_{\div}^2 &\approx | P_X (\bm{x}) |_{\div}^2 \qquad&&\text{for all $\bm{x} \in \Xp$},\\
         \| y \|_0^2 &\approx \| P_Y (y) \|_0^2 &&&&\text{for all $y \in \Ypm$},\\
         \| z \|_0^2 &\approx \| P_Z (z) \|_0^2, \quad&\quad
         \iii z \iii_p^2 &\approx \iii P_Z (z) \iii_h^2 \qquad&&\text{for all $z \in \Zp$},
      \end{alignat}
   \end{gather}
   where $| \bm{w} |_{\curl} = \| \nabla \times \bm{w} \|_0$ and $| \bm{x} |_{\div} = \| \nabla \cdot \bm{x} \|_0$ denote the curl and divergence seminorms, respectively, and $\iii \cdot \iii_p$ and $\iii \cdot \iii_h$ denote the high-order and low-order mesh-dependent DG norms (defined in \Cref{sec:dg}).
\end{thm}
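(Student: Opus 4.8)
The plan is to reduce every equivalence to the reference-element statements already established in \Cref{prop:l2-equiv,prop:grad-equiv,prop:curl-equiv,prop:div-equiv}. All of the norms and seminorms appearing in the theorem are additive over the elements $\kappa \in \mathcal{T}_p$, and each transfer operator $P_\circ$ acts element-by-element, so it suffices to prove each equivalence on a single element $\kappa$. On such an element I would pull back to $\widehat\kappa = \refinterval^3$ through the affine map $T_\kappa$ together with the appropriate Piola transform, after which the high-order field becomes $\mathcal{I}_p^\bullet(\mathsf{u})$ and its low-order image under $P_\circ$ becomes $\mathcal{I}_h^\bullet(\mathsf{u})$ for the \emph{same} degree-of-freedom vector $\mathsf{u}$, by the choice of basis in \Cref{sec:basis}.

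For the $L^2$ norms, the change of variables introduces only the scalar factor $|\det J_\kappa|$ (for $\Vp$, $\Ypm$, $\Zp$) or a constant Piola matrix (for $\Wp$, $\Xp$), and this factor is identical for the high-order and low-order fields, so it cancels in the equivalence. The $\Vp$, $\Ypm$, and $\Zp$ norm equivalences then follow directly from \Cref{prop:l2-equiv} (the interpolation statement for $\Vp$ and $\Zp$, the histopolation statement for $\Ypm$), while the $\Wp$ and $\Xp$ norm equivalences follow from the first equivalences in \Cref{prop:curl-equiv,prop:div-equiv}. The one elementary fact to record is that for a fixed invertible matrix $M$ assembled from $J_\kappa$, the map $\bm{v} \mapsto \|M\bm{v}\|_0$ is equivalent to $\|\bm{v}\|_0$ with constants controlled by the singular values of $M$; these are independent of $p$, so the constant Piola factor can be stripped without affecting the equivalence.

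For the seminorms, the essential point is that the differential operators do not commute naively with the Piola pullback but pick up the Piola factor of the \emph{next} space in the de Rham complex. Concretely, I would invoke the standard identities $\nabla_x v = J_\kappa^{-T}\nabla_{\widehat x}\widehat v$, $\nabla_x\times(J_\kappa^{-T}\widehat{\bm w}) = \det(J_\kappa)^{-1}J_\kappa(\nabla_{\widehat x}\times\widehat{\bm w})$, and $\nabla_x\cdot(\det(J_\kappa)^{-1}J_\kappa\widehat{\bm x}) = \det(J_\kappa)^{-1}(\nabla_{\widehat x}\cdot\widehat{\bm x})$ to rewrite each physical-space seminorm as a constant-metric-weighted reference seminorm. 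The reference gradient is exactly $\mathcal{G}_p^3\mathsf{u}$ (respectively $\mathcal{G}_h^3\mathsf{u}$), the reference curl is $\mathcal{C}_p\mathsf{u}$ (respectively $\mathcal{C}_h\mathsf{u}$), and the reference divergence is $\mathcal{D}_p\mathsf{u}$ (respectively $\mathcal{D}_h\mathsf{u}$); stripping the constant metric as above reduces the three seminorm equivalences to \Cref{prop:grad-equiv}, the second equivalence in \Cref{prop:curl-equiv}, and the second equivalence in \Cref{prop:div-equiv}, respectively.

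The main obstacle will be this seminorm bookkeeping: one must apply the correct Piola commuting identity so that the differentiated field is measured in the metric induced by the \emph{image} space rather than the domain space, and only then verify that the resulting constant metric preserves the $L^2$ vector-field equivalence supplied by the reference-element propositions. Everything else is routine change of variables combined with the elementary matrix-norm comparison. Finally, the discontinuous Galerkin seminorm equivalence $\iii z\iii_p^2 \approx \iii P_Z(z)\iii_h^2$ involves interelement jump and penalty terms that have not yet been introduced, so I would defer its proof to \Cref{sec:dg}.
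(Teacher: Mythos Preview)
Your proposal is correct and follows essentially the same route as the paper: reduce to element-by-element equivalences, pull back to the reference element using the constant-Jacobian assumption and the appropriate Piola transforms, then invoke \Cref{prop:l2-equiv,prop:grad-equiv,prop:curl-equiv,prop:div-equiv}, deferring the DG seminorm case to \Cref{sec:dg}. The paper's proof is a one-line sketch of exactly this argument; your write-up simply makes the Piola bookkeeping explicit.
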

\begin{proof}
   The proof proceeds easily by summing over each element $\kappa \in \mathcal{T}_p$, using the assumption of constant Jacobians, and the norm equivalences established in \Cref{prop:l2-equiv,prop:grad-equiv,prop:curl-equiv,prop:div-equiv}.
   The equivalence in the DG norms $\iii \cdot \iii_p$ and $\iii \cdot \iii_h$ is deferred to \Cref{sec:dg}.
\end{proof}

\begin{thm} \label{thm:spectral-equivalence}
   Let $M_\star$ and $K_\star$ denote the mass and stiffness matrices, respectively, where $\star$ represents one of the above-defined finite element spaces with basis as in \Cref{sec:basis}.
   Then we have the following spectral equivalences, independent of mesh size $h$ and polynomial degree $p$.
   \[
      \begin{aligned}
         M_{\Vh} &\sim M_{\Vp}, \quad& K_{\Vh} &\sim K_{\Vp}, \\
         M_{\Wh} &\sim M_{\Wp}, \quad& K_{\Wh} &\sim K_{\Wp}, \\
         M_{\Xh} &\sim M_{\Xp}, \quad& K_{\Xh} &\sim K_{\Xp}, \\
         M_{\Yh} &\sim M_{\Ypm}, \\
         M_{\Zh} &\sim M_{\Zp}, \quad& K_{\Zh} &\sim K_{\Zp}.
      \end{aligned}
   \]
\end{thm}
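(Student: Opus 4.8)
The plan is to recognize that this theorem carries no new analytic content beyond \Cref{thm:norm-equiv}: it is merely the translation of the (semi)norm equivalences into the language of matrices. The starting observation is that, in the bases of \Cref{sec:basis}, each matrix is the Gram matrix of its associated (semi)norm. Concretely, for $v \in \Vp$ with coefficient vector $\mathsf{v}$ in the nodal basis one has $\mathsf{v}^T M_{\Vp} \mathsf{v} = \| v \|_0^2$ and $\mathsf{v}^T K_{\Vp} \mathsf{v} = | v |_1^2$, with the analogous identities $\mathsf{v}^T M_\star \mathsf{v} = \| v \|_0^2$ and $\mathsf{v}^T K_\star \mathsf{v} = | v |_\star^2$ holding for the $\Hcurl$, $\Hdiv$, $L^2$, and DG spaces and their natural (semi)norms. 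The same identities hold verbatim on the low-order-refined side for $M_{\Vh}, K_{\Vh}$, and so on.

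First I would invoke the structural fact recorded in the remark preceding the theorem: because the degrees of freedom of the high-order bases coincide exactly with those of the low-order-refined spaces, every transfer operator $P_\star$ is represented by the identity matrix. Consequently the high-order and low-order matrices are indexed by the \emph{same} set of degrees of freedom and act on a common coefficient space $\mathbb{R}^N$, so that comparing their Rayleigh quotients is meaningful. Writing $\mathsf{v}$ for this shared coefficient vector, the low-order forms read $\mathsf{v}^T M_{\Vh} \mathsf{v} = \| P_V(v) \|_0^2$ and $\mathsf{v}^T K_{\Vh} \mathsf{v} = | P_V(v) |_1^2$, and similarly in the other spaces.

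With this identification in hand, each spectral equivalence follows by reading off the corresponding line of \Cref{thm:norm-equiv}. For instance, $\| v \|_0^2 \approx \| P_V(v) \|_0^2$ for all $v \in \Vp$ states precisely that $\mathsf{v}^T M_{\Vp} \mathsf{v} \approx \mathsf{v}^T M_{\Vh} \mathsf{v}$ for all $\mathsf{v} \in \mathbb{R}^N$, which by the definition of $\sim$ in the notation remark is exactly $M_{\Vh} \sim M_{\Vp}$; likewise $| v |_1^2 \approx | P_V(v) |_1^2$ yields $K_{\Vh} \sim K_{\Vp}$. The $\Hcurl$, $\Hdiv$, $L^2$, and DG rows are handled identically, using the curl-seminorm, the divergence-seminorm, the plain $L^2$-norm, and the mesh-dependent DG norm, respectively; note that $\Ypm$ sits at the end of the complex and so carries only a mass matrix. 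Since the constants in \Cref{thm:norm-equiv} are independent of both $p$ and $h$, so are the constants of equivalence.

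I do not expect a genuine obstacle, since all the work lives in \Cref{thm:norm-equiv}; only two points require care. The stiffness matrices $K_{\Wp}, K_{\Xp}$ (and $K_{\Vp}$) are merely positive semidefinite, so $\sim$ must be read in the semidefinite sense; this is legitimate because by the earlier remark the high- and low-order curl, divergence, and gradient operators share \emph{identical} nullspaces, and the quadratic-form equivalence holds for every $\mathsf{v}$, trivially including kernel vectors where both forms vanish. The second point is the DG pair $M_{\Zh} \sim M_{\Zp}$ and $K_{\Zh} \sim K_{\Zp}$: the stiffness equivalence here rests on the DG-norm equivalence $\iii z \iii_p^2 \approx \iii P_Z(z) \iii_h^2$ deferred to \Cref{sec:dg}, which I would cite once established, noting in particular that its constants must be shown independent of the interior-penalty parameter as well.
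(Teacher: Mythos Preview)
Your proposal is correct and matches the paper's own argument, which is the single sentence ``These spectral equivalences follow immediately from the norm equivalences of \Cref{thm:norm-equiv}.'' You have simply unpacked that sentence in full detail, including the key observation that the transfer operators are represented by identity matrices in the chosen bases.
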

\begin{proof}
   These spectral equivalences follow immediately from the norm equivalences of \Cref{thm:norm-equiv}.
\end{proof}

\subsection{Discontinuous Galerkin discretizations} \label{sec:dg}

In the context of the DG space $\Zp$, since no continuity is enforced between elements, it is possible to use either the interpolatory basis induced by $\mathcal{I}_p^d$ or the histopolation basis induced by $\mathcal{H}_p^d$.
Both of theses basis give rise of $L^2$ norm equivalence for the high-order and low-order spaces.
However, the histopolation basis does not give a straightforward low-order equivalence for interior penalty discretizations of the diffusion operator.
For this reason, we prefer to use the interpolatory (nodal) basis induced by $\mathcal{I}_p^d$ for DG spaces.
This is the same basis that is used for $H^1$ spaces, which is natural if we interpret the DG space as a ``broken $H^1$'' space rather than an $L^2$ finite element space.

The DG low-order-refined mesh $\mathcal{T}_h'$ is obtained by subdividing each element $\kappa \in \mathcal{T}_p$ into $(p+1)^d$ subelements, defined by the Cartesian product of the $p+2$ Gauss--Lobatto points $x_i'$.
The nodal basis of $\Zp$ is defined using the $p+1$ Gauss--Lobatto points $x_i$.
Note that the interlacing property of the Gauss--Lobatto quadrature implies that in every interval $[x_i', x_{i+1}']$ there lies exactly one point $x_i$ \cite{Szego1939,Beckermann2013}.
The transfer operator $P_Y$ then maps piecewise polynomials $u_p$ to piecewise constant functions whose constant value over each subcell is given by the value of $u_p$ at the unique nodal point lying in that subcell.

Consider the symmetric interior penalty (IP) discretization of the Poisson problem \cite{Arnold1982,Arnold2002}
\begin{equation} \label{eq:ip}
   \mathcal{A}_{\Zp} (u, v)
   = ( \nabla_h u, \nabla_h v)
      - \langle \{ \nabla_h u \}, \llb v \rrb \rangle
      - \langle \llb u \rrb, \{ \nabla_h v \} \rangle
      + \langle \sigma_p \llb u \rrb, \llb v \rrb \rangle
   = (f, v),
\end{equation}
where $\nabla_h$ denotes the broken gradient operator and $\langle \cdot\,,\cdot \rangle$ denotes integration over element interfaces (i.e.\ over the mesh skeleton $\Gamma_p$ of $\mathcal{T}_p$).
The notation $\{\,\cdot\,\}$ and $\llb\,\cdot\,\rrb$ is used to denote the average and jump of a function at element interfaces, respectively.
In the above, $\sigma_p = \eta p^2 / h$ is the \textit{penalty parameter}, which must be chosen sufficiently large to obtain a stable method.
The norm induced by IP discretization is equivalent to mesh dependent norm $\iii\cdot\iii_p$, defined by
\[
\iii u \iii_p^2 = \| \nabla_h \|_0^2 + \| \sigma_p^{1/2} \llb u \rrb \|_{0,\Gamma_p}^2.
\]
\begin{prop}[Cf.\ \cite{Antonietti2010,Houston2002}] \label{prop:dg-norm}
   For all $u, v \in \Zp$, it holds that
   \begin{align*}
      \mathcal{A}_{\Zp}(u, v) &\lesssim \iii u \iii_p \, \iii v \iii_p, \\
      \mathcal{A}_{\Zp}(u, u) &\gtrsim \iii u \iii_p^2.
   \end{align*}
\end{prop}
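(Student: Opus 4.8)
This is the standard pair of estimates for the symmetric interior penalty method, so the plan is to follow the classical argument (as in the cited references \cite{Antonietti2010,Houston2002}), adapted to the norm $\iii\cdot\iii_p$. First I would expand $\mathcal{A}_{\Zp}(u,v)$ into its four constituent terms: the broken volume term $(\nabla_h u, \nabla_h v)$, the two consistency (symmetrization) terms involving $\langle \{\nabla_h u\}, \llb v \rrb \rangle$ and its transpose, and the penalty term $\langle \sigma_p \llb u \rrb, \llb v \rrb \rangle$. The volume and penalty terms are immediately bounded in $\iii\cdot\iii_p$ by Cauchy--Schwarz, since they are exactly (up to cross terms) the squared pieces making up the norm. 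The crux is controlling the consistency terms.

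For continuity, the main obstacle is the mixed term $\langle \{\nabla_h u\}, \llb v \rrb \rangle$, which involves the normal flux $\{\nabla_h u\}$ on the skeleton rather than a full volume gradient. The standard device is a discrete trace inequality on each element: for a polynomial of degree $p$ on an element of size $h$, one has $\| \nabla u \|_{0,\partial\kappa}^2 \lesssim (p^2/h)\| \nabla u \|_{0,\kappa}^2$. Using this together with Cauchy--Schwarz, I would bound $\langle \{\nabla_h u\}, \llb v \rrb \rangle$ by $\| \sigma_p^{-1/2}\{\nabla_h u\}\|_{0,\Gamma_p}\,\|\sigma_p^{1/2}\llb v\rrb\|_{0,\Gamma_p}$, and the trace inequality shows that the first factor is controlled by $\|\nabla_h u\|_0$ precisely because $\sigma_p = \eta p^2/h$ has the matching scaling $p^2/h$. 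This yields $\langle \{\nabla_h u\}, \llb v \rrb \rangle \lesssim \iii u \iii_p \iii v \iii_p$, and summing the four bounds gives continuity.

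For coercivity, I would set $v = u$, so that the volume and penalty terms contribute $\|\nabla_h u\|_0^2 + \|\sigma_p^{1/2}\llb u\rrb\|_{0,\Gamma_p}^2 = \iii u \iii_p^2$, while the two symmetrization terms combine to $-2\langle \{\nabla_h u\},\llb u\rrb\rangle$. Applying the trace bound above with a Young's inequality $2ab \le \varepsilon a^2 + \varepsilon^{-1} b^2$, I would absorb the cross term: the $\|\nabla_h u\|_0^2$ part is bounded by $\varepsilon C \| \nabla_h u\|_0^2$ and the jump part by $\varepsilon^{-1}\|\sigma_p^{1/2}\llb u\rrb\|_{0,\Gamma_p}^2$ (scaled appropriately through the trace constant). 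Choosing $\varepsilon$ small leaves a positive fraction of $\|\nabla_h u\|_0^2$, and this is exactly where the penalty parameter must be taken large enough: the constant $\eta$ in $\sigma_p$ must exceed a threshold proportional to the trace-inequality constant so that the penalty term dominates the leftover negative contribution. The hardest part of the argument is tracking these constants carefully enough to confirm they depend only on the trace constant (hence are independent of $h$ and $p$ once $\eta$ is fixed above threshold), which is what makes $\mathcal{A}_{\Zp}(u,u)\gtrsim \iii u\iii_p^2$ hold uniformly.
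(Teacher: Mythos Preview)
The paper does not supply its own proof of this proposition: it is stated with a ``Cf.'' citation to \cite{Antonietti2010,Houston2002} and used as a known result. Your outline reproduces the standard symmetric interior penalty argument from those references (Cauchy--Schwarz on the volume and penalty terms, a discrete trace inequality with scaling $p^2/h$ to control the consistency terms, and Young's inequality plus a threshold choice of $\eta$ for coercivity), and is correct.
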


This norm equivalence allows for the construction of a spectrally equivalent low-order ($p=0$) discretization defined on the refined mesh $\mathcal{T}_h$.
We first note that $\mathcal{A}_{\Zp}$ restricted to the piecewise constant space $\Zh$ reduces to only the penalty term $\langle \sigma_p \llb u \rrb, \llb v \rrb \rangle$, since $\nabla_h u = 0$ for all $u \in \Zh$.
Therefore, we define the low-order interior penalty form
\begin{equation} \label{eq:ip-lor}
   \mathcal{A}_{\Zh} = \langle \sigma_h \llb u \rrb, \llb v \rrb \rangle,
\end{equation}
where the integrals are performed over $\Gamma_h$, the mesh skeleton of $\mathcal{T}_h$.
The choice of penalty parameter $\sigma_h$, defined as a piecewise constant field on each face $e \in \Gamma_h$, is of critical importance.
The form $\mathcal{A}_{\Zh}$ induces the low-order DG norm $\iii u \iii_h = \| \sigma_h^{1/2} \llb u \rrb \|_{0,\Gamma_h}$.

We first write the low-order mesh skeleton $\Gamma_h$ as the disjoint union $\Gamma_h = \Gamma_\circ \cup \Gamma_\partial$.
The set $\Gamma_\partial$ consists of faces that are subsets of coarse mesh faces, i.e.\ $e_\partial \in \Gamma_\partial$ satisfies $e_\partial \subseteq e_p$ for some $e_p \in \Gamma_p$.
On the other hand, the set $\Gamma_\circ$ denotes those faces that lie in the interior of the coarse high-order macro-elements.
A face $e_\circ \in \Gamma_\circ$ satisfies $e_\circ \nsubseteq e_p$ for all $e_p \in \Gamma_p$.
The piecewise constant penalty parameter $\sigma_h(e)$ will be defined separately for $e \in \Gamma_\circ$ and $e \in \Gamma_\partial$.

Consider the reference macro-element $\widehat{\kappa} = \refinterval^3$, which has been decomposed into $(p+1)^3$ subelements.
On general meshes, a scaling factor $\alpha$ is defined by multiplying by the ratio of the reference and physical element sizes, $\alpha = \widehat{h} / h$.
The element size $h$ at the face is computed as the average of the sizes $h = \frac{1}{2}(h^+ + h^-)$ of the adjacent elements $\kappa^{\pm}$, where $h^{\pm}$ is computed as the perpendicular length of the element, $h^{\pm} = \mu(\kappa^{\pm})/\mu(e)$, where $\mu$ denotes measure.

First we consider an interior face $e_{\circ} \in \Gamma_\circ$, and, without loss of generality, we assume that $e_{\circ}$ is normal to the $z$ coordinate direction.
In this case, $e_{\circ} = [x_i', x_{i+1}'] \times [x_j', x_{j+1}'] \times \{ x_k' \}$ for some $(i,j,k)$.
Since $e_{\circ}$ is an interior face, we have $1 < k < p+2$.
Let $x_i$ and $x_j$ denote the unique nodal points lying in $[x_i', x_{i+1}']$ and $[x_j', x_{j+1}']$, respectively, and let $w_i$ and $w_j$ denote the corresponding Gauss--Lobatto weights.
Additionally, let $x_k$ and $x_{k+1}$ denote the unique nodal points lying in $[x_{k-1}', x_k']$ and $[x_k', x_{k+1}']$, respectively.
Then, define
\begin{equation} \label{eq:sigma-int}
   \sigma_h(e_{\circ}) = \frac{\alpha w_i w_j}{(x_{i+1}' - x_i')(x_{j+1}' - x_j')(x_{k+1}-x_k)}.
\end{equation}

Now, consider the case of $e_{\partial} \in \Gamma_\partial$.
Without loss of generality, write $e_\partial = [x_i', x_{i+1}'] \times [x_j', x_{j+1}'] \times \{ 0 \}$.
As before, let $w_i$ and $w_j$ denote the Gauss--Lobatto weights corresponding to the unique nodes lying in the intervals $[x_i', x_{i+1}']$ and $[x_j', x_{j+1}']$.
Then, define
\begin{equation} \label{eq:sigma-bdr}
   \sigma_h(e_\partial) = \alpha \eta p^2 w_i w_j.
\end{equation}

\def\L{{L}}
\def\G{{G}}
\def\E{{E}}
\def\V{{V}}
\def\e{{e}}
\def\T{{T}}
\def\D{{D}}

\begin{rem} \label{rem:graph-laplacian}
   The discretization defined by \eqref{eq:ip-lor} is equivalent to the \textit{weighted graph Laplacian} defined on the connectivity graph of the mesh $\mathcal{T}_h$.
   Let $\G = (\V, \E)$ be the graph defined by $\mathcal{T}_h$, such that each element $\kappa_i \in \mathcal{T}_h$ corresponds to a vertex $i \in \V$, and the edge $(i,j) \in \E$ exists whenever elements $\kappa_i, \kappa_j \in \mathcal{T}_h$ share a common face $e \in \Gamma_h$, in which case we also write $i \sim j$.

   For each graph edge $(i,j) \in \E$, define the weight $w_{ij}$ by $w_{ij} = \sigma_h(e)$ (given by \eqref{eq:sigma-int} and \eqref{eq:sigma-bdr}), where $e \in \Gamma_h$ is the interface between elements $\kappa_i$ and $\kappa_j$.
   For each graph vertex $i$, the weight $w_i$ is defined by $w_i = \sum_{i \sim j} w_{ij}$.
   Then, the weighted graph Laplacian $\L$ of $\G$ is the matrix defined by
   \begin{equation} \label{eq:graph-laplacian}
      \L_{ij} = \begin{cases}
         -w_{ij} & \text{if $i \sim j$,} \\
         w_i & \text{if $i = j$,} \\
         0 & \text{otherwise.}
      \end{cases}
   \end{equation}
   It is straightforward to see that the matrix $\L$ is identical to the stiffness matrix $K_{\Zh}$ corresponding to the bilinear form $\mathcal{A}_{\Yp}$ in the case of Neumann boundary conditions.
\end{rem}

\begin{thm} \label{thm:dg-equivalence}
   The low-order DG discretization defined by \eqref{eq:ip-lor} with $\sigma_h$ given by \eqref{eq:sigma-int} and \eqref{eq:sigma-bdr} is spectrally equivalent to the high-order DG discretization \eqref{eq:ip}.
\end{thm}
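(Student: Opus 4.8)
The plan is to reduce the theorem to the single equivalence $\iii z\iii_p^2 \approx \iii P_Z(z)\iii_h^2$ that was deferred from \Cref{thm:norm-equiv}. Since the interior penalty form \eqref{eq:ip} is symmetric, $K_{\Zp}$ is the Gram matrix of $\mathcal{A}_{\Zp}$, and by \Cref{prop:dg-norm} we have $\mathcal{A}_{\Zp}(z,z)\approx\iii z\iii_p^2$; likewise $\mathcal{A}_{\Zh}(u,u)=\iii u\iii_h^2$ by the definition \eqref{eq:ip-lor}, so $K_{\Zh}$ is the Gram matrix of $\iii\cdot\iii_h$. Because the basis of \Cref{sec:basis} renders the matrix of $P_Z$ the identity, establishing $K_{\Zh}\sim K_{\Zp}$ is exactly establishing $\iii z\iii_p^2\approx\iii P_Z(z)\iii_h^2$ for all $z\in\Zp$. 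Using the constant-Jacobian assumption I would sum over elements and work on a single reference macro-element $\refinterval^3$, carrying the geometric scaling through $\alpha$ and the face size $h$.

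Next I would split the fine skeleton $\Gamma_h=\Gamma_\circ\cup\Gamma_\partial$ and show that the interior faces reproduce the broken gradient $\|\nabla_h z\|_0^2$ and the boundary faces the penalty $\|\sigma_p^{1/2}\llb z\rrb\|_{0,\Gamma_p}^2$. For an interior face $e_\circ$ normal to, say, the $z$-direction, the jump of $P_Z(z)$ is the difference $z_{k+1}-z_k$ of the two nodal values straddling $e_\circ$, and the weight \eqref{eq:sigma-int} is constructed precisely so that, after the fine-face area cancels, the contribution along a fixed transverse line becomes $\alpha\, w_i w_j\sum_k (z_{k+1}-z_k)^2/(x_{k+1}-x_k)$. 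The inner sum is $\|(\mathcal{I}_h\mathsf{z})'\|_0^2$, which by \Cref{cor:1d-h1-equiv} is equivalent to $\|(\mathcal{I}_p\mathsf{z})'\|_0^2=\|\partial_z z\|_0^2$ along that line; integrating the transverse directions with the Gauss--Lobatto weights and invoking the tensor-product form of \Cref{prop:1d-l2-equiv} (with equivalence constant independent of the transverse variable) turns the weighted sum into the reference-element quantity $\|\partial_z z\|_0^2$. Repeating this for all three coordinate directions gives the interior equivalence.

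For the boundary faces I would use that the Gauss--Lobatto nodes include the endpoints $\pm 1$, so the jump of $P_Z(z)$ across $e_\partial\subseteq e_p$ equals the trace jump $\llb z\rrb$ of $z$ evaluated at the node contained in $e_\partial$. Assembling the weights \eqref{eq:sigma-bdr} over the fine faces covering a coarse face $e_p$, the boundary contribution to $\iii P_Z(z)\iii_h^2$ takes the form of a Gauss--Lobatto quadrature, with nodes $(x_i,x_j)$ and weights proportional to $w_i w_j$, of the squared degree-$p$ trace polynomial $\llb z\rrb^2$, scaled by $\eta p^2/h$. The two-dimensional tensor-product version of \Cref{prop:1d-l2-equiv} shows this quadrature is equivalent to $\int_{e_p}\llb z\rrb^2$, and matching the scaling to $\sigma_p=\eta p^2/h$ gives equivalence with $\|\sigma_p^{1/2}\llb z\rrb\|_{0,\Gamma_p}^2$. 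Adding the interior and boundary estimates yields $\iii P_Z(z)\iii_h^2\approx\|\nabla_h z\|_0^2+\|\sigma_p^{1/2}\llb z\rrb\|_{0,\Gamma_p}^2=\iii z\iii_p^2$.

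The hard part will be the boundary estimate together with the geometric bookkeeping. The former is a face-restricted FEM--SEM equivalence---that Gauss--Lobatto quadrature of the square of a degree-$p$ trace is uniformly equivalent to its exact $L^2$ face integral---which must be extracted from \Cref{prop:1d-l2-equiv} applied on the coarse face rather than in the volume. Tightly coupled to this is reconciling the discrete weights \eqref{eq:sigma-int}--\eqref{eq:sigma-bdr} with the continuous penalty: one must verify that the interplay of the Gauss--Lobatto weights $w_i$, the two distinct subinterval partitions (the $p+1$- and $p+2$-point grids), the scaling $\alpha=\widehat h/h$, and the averaged perpendicular size $h$ reproduces $\sigma_p$ and the gradient with constants independent of $p$ and $h$. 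The interior and boundary terms control different parts of $\iii\cdot\iii_p^2$ and may be bounded separately, so no cancellation between them has to be tracked.
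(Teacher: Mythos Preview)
Your proposal is correct and follows essentially the same route as the paper: reduce via \Cref{prop:dg-norm} to $\iii z\iii_p^2\approx\iii P_Z(z)\iii_h^2$, split $\Gamma_h=\Gamma_\circ\cup\Gamma_\partial$, and show the interior faces recover $\|\nabla_h z\|_0^2$ while the boundary faces recover $\|\sigma_p^{1/2}\llb z\rrb\|_{0,\Gamma_p}^2$. The only cosmetic difference is that the paper handles the interior part by first invoking \Cref{prop:grad-equiv} to pass to the piecewise linear interpolant $\widetilde{u}_h$ and then expressing $\partial_z\widetilde{u}_h$ in terms of jumps, whereas you run the same computation in the opposite order (jump sums first, then \Cref{cor:1d-h1-equiv} line by line and tensorize); the paper also makes explicit use of $w_k\approx x_{k+1}-x_k$ when matching the longitudinal factor, which you fold into the ``geometric bookkeeping'' you flag at the end.
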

\begin{proof}
   Let $u_p \in \Zp$ be given, and let $u_h = P_Z u_p$.
   Then, by \Cref{prop:dg-norm},
   \[
      \mathcal{A}_{\Zp}(u_p, u_p)
         \approx \iii u_p \iii^2
         = \| \nabla_h u_p \|_0^2
         + \| \sigma_p^{1/2} \llb u_p \rrb \|_{0,\Gamma_p}^2.
   \]
   We first consider the term $\| \llb u \rrb \|_{0,\Gamma_p}^2$.
   For a given face $e_p \in \Gamma_p$, consider the set $\mathcal{E}(e_p) = \{ e \in \Gamma_\partial : e \subseteq e_p \}$.
   Each such subelement face $e_i \in \mathcal{E}(e_p)$ corresponds to a nodal point $x_i \in e_p$ and Gauss--Lobatto weight $w_i$.
   Given definition \eqref{eq:sigma-bdr}, we have $\sigma_h(e_i) = \alpha \eta p^2 w_i$.
   Using the property of Gauss--Lobatto quadrature that $\| f \|_{0,e}^2 \approx \sum w_i f(x_i)^2$ (cf.\ \cite{Canuto1982}), we have
   \begin{align*}
      \sum_{e_i \in \mathcal{E}(e_p)} \| \sigma_h^{1/2} \llb u_h \rrb \|_{e_i}^2
      &= \sum_{e_i \in \mathcal{E}(e_p)} \alpha \eta p^2 w_i \mu(e_i) \llb u_h(x_i) \rrb^2 \\
      &\approx \eta p^2 h^{-1} \sum_{e_i} w_i \mu(e_i) \llb u_p(x_i) \rrb^2 \\
      &\approx \| \sigma_p^{1/2} \llb u_p \rrb \|_{0,e_p},
   \end{align*}
   using that $\alpha \approx h^{-1}$.
   Therefore, $\| \sigma_p^{1/2} \llb u_p \rrb \|_{0,\Gamma_p}^2 \approx \| \sigma_h^{1/2} \llb u_h \rrb \|_{0,\Gamma_\partial}$.

   Now we consider the term $\| \nabla_h u_p \|_{0,\kappa}^2$ on a given element $\kappa \in \mathcal{T}_p$.
   By \Cref{prop:grad-equiv}, we have that $\| \nabla_h u_p \|_{0,\kappa}^2 \approx \| \nabla_h \widetilde{u}_h \|_{0,\kappa}^2$, where $\widetilde{u}_h$ is the piecewise linear interpolant of $u_p$.
   Now, consider an interior face $e_\circ \in \Gamma_\circ$.
   Without loss of generality, assume that $e_\circ$ is the image under the element transformation mapping $T_\kappa$ of $\widehat{e} = [x_i', x_{i+1}'] \times [x_j', x_{j+1}'] \times \{ x_k' \}$  for some $(i,j,k)$; the cases of faces normal to the $x$ and $y$ coordinate directions in the reference element follow analogously.
   On the face $e_\circ$ we have $\frac{\partial \widetilde{u}_h}{\partial z} \approx h^{-1} \frac{\llb u_h \rrb}{x_{k+1} - x_k}$ and so
   \begin{align*}
      \int_{\kappa} \left( \frac{\partial \widetilde{u}_h}{\partial z} \right)^2 \, dx
      \approx \sum_{ijk} h^3 w_i w_j w_k \left( \frac{\partial \widetilde{u}_h}{\partial z} \right)^2
      \approx \sum_{ijk} h w_i w_j w_k \left( \frac{\llb u_h \rrb}{x_{k+1} - x_k} \right)^2.
   \end{align*}
   We compute
   \begin{align*}
      \int_{e_\circ} \sigma_h \llb u_h \rrb^2 \, ds
      &= \frac{\alpha w_i w_j \mu(e_\circ)}{(x_{i+1}' - x_i')(x_{j+1}' - x_j')(x_{k+1}-x_k)} \llb u_h \rrb^2 \\
      &\approx \frac{h w_i w_j}{x_{k+1}-x_k} \llb u_h \rrb^2 \\
      &= h w_i w_j (x_{k+1}-x_k) \left(\frac{\llb u_h \rrb}{x_{k+1} - x_k}\right)^2.
   \end{align*}
   Recalling that $w_k \approx x_{k+1} - x_k$, and summing over all interior faces (including the $x$- and $y$-normal faces), we obtain
   \[
   \sum_{e_\circ \in \Gamma_\circ} \sigma_h \llb u_h \rrb^2 \, ds
   \approx \sum_{\kappa \in \mathcal{T}_p} \| \nabla_h \widetilde{u}_h \|_{0,\kappa}^2
   \approx \| \nabla_h u_p \|_0^2,
   \]
   and the result follows.
\end{proof}

\section{Algebraic multigrid preconditioning}

Let $A_h$ denote a convex combination of the low-order-refined stiffness and mass matrices $K_h$ and $M_h$ (where the subscript $h$ is shorthand for one of $\Vh, \Wh, \Xh, \Yh$, or $\Zh$).
Let $A_p$ denote the associated high-order operator.
The spectral equivalence results of \Cref{thm:spectral-equivalence} (i.e.\ $A_h \sim A_p$) imply that any good preconditioner for the low-order and sparse system $A_h$ will also be a good preconditioner for the corresponding high-order system $A_p$.
In principle, there are a number of multigrid, domain decomposition, and incomplete factorization preconditioners that will result in well-conditioned systems.
In this work, we focus on algebraic multigrid (AMG) methods: these methods give essentially black-box highly scalable preconditioners for $A_h$ requiring minimal discretization information.
AMG convergence for lowest-order $H^1$ and DG finite element discretizations for elliptic problems has been extensively studied in the literature \cite{Brandt1986, McCormick1985, Ruge1987, Falgout2004} and has further been extended to definite $\Hcurl$ \cite{Kolev2009, Brunner2011} and $\Hdiv$ \cite{Kolev2012, Dobrev2019} problems.
Additionally, several high-performance massively parallel and GPU-accelerated implementations such as the BoomerAMG, AMS and ADS preconditioners in the {\em hypre} library \cite{Falgout2002} are available.

Although not studied here, domain decomposition algorithms can also be used as preconditioners for the low-order matrix $A_h$.
Both iterative substructuring and overlapping Schwarz algorithms were analyzed in \cite{Casarin1997} using the FEM--SEM equivalence of $H^1$ discretizations.
That analysis was motivated by earlier numerical experiments in \cite{Pahl1993}.
Overlapping Schwarz preconditioners using the FEM--SEM equivalence were studied recently in \cite{Dohrmann2021a} for both $\Hcurl$ and $\Hdiv$ discretizations.
The numerical results in that study were promising, but the analysis of domain decomposition preconditioners for $A_h$ for problems in these two function spaces remains an open problem.

\subsection{Mass matrix preconditioning} \label{sec:mass-precond}

It is well known that the high-order mass matrix (using either nodal Gauss--Lobatto or Gauss--Legendre basis) is spectrally equivalent to its diagonal, independent of the polynomial degree $p$ \cite{Canuto1982,Canuto2010} (on parallelepiped elements, the Gauss--Legendre matrix is equal to its diagonal).
In fact, it can be shown that on the reference interval $\refinterval$, the fully integrated mass matrix with Gauss--Lobatto basis is given by a rank-one update to the diagonal matrix of Gauss--Lobatto weights \cite{Teukolsky2015}.
In this case, the matrix $D^{-1} M$, where $D = \diag(M)$, has only two distinct eigenvalues, and its condition number \textit{decreases} with increasing $p$.

It is straightforward to show that the mass matrix using the interpolation--histopolation basis defined in this paper is also spectrally equivalent to its diagonal.
A comparison of diagonal preconditioners for the high-order mass matrix is included in \Cref{sec:mass-results}.

\begin{prop}
   Let $M_p$ denote the high-order mass matrix defined on one of the spaces $\Vp$, $\Wp$, $\Xp$, $\Ypm$, or $\Zp$.
   Let $D_p = \diag(M_p)$.
   Then, $M_p \sim D_p$, independent of $p$.
\end{prop}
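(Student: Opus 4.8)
The plan is to reduce the claim to two one-dimensional facts using the tensor-product structure of the interpolation--histopolation bases, organized around the following elementary observation: if an SPD matrix $A$ is spectrally equivalent (uniformly in $p$) to \emph{any} positive diagonal matrix $B$, then $A \sim \diag(A)$. Indeed, testing the relation $c B \preceq A \preceq C B$ against the Cartesian basis vectors $\mathsf{e}^i$ gives $c B_{ii} \le A_{ii} \le C B_{ii}$, so $B \sim \diag(A)$ and hence $A \sim B \sim \diag(A)$ with constants depending only on $C/c$. It therefore suffices to exhibit, for each space, a positive diagonal matrix that is spectrally equivalent to $M_p$ with $p$-independent constants.

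First I would record the two one-dimensional building blocks. Let $M_{\mathcal{I}_p}$ and $M_{\mathcal{H}_{p-1}}$ denote the reference-interval mass matrices of the interpolatory and histopolatory bases from \Cref{rem:basis}. For the interpolatory basis, the fully integrated Gauss--Lobatto mass matrix is a rank-one update of the (positive) diagonal of Gauss--Lobatto weights \cite{Teukolsky2015}, so $M_{\mathcal{I}_p} \sim \diag(M_{\mathcal{I}_p})$ with a constant that in fact improves with $p$. For the histopolatory basis, the corresponding low-order object is the mass matrix of the piecewise-constant indicators $\chi_i$ of the Gauss--Lobatto subintervals, which is \emph{exactly} the positive diagonal matrix $\diag(h_i)$ since the $\chi_i$ are $L^2$-orthogonal; the equivalence \eqref{eq:1d-l2-histop-equiv} then reads $M_{\mathcal{H}_{p-1}} \sim \diag(h_i)$ uniformly in $p$, which by the observation above gives $M_{\mathcal{H}_{p-1}} \sim \diag(M_{\mathcal{H}_{p-1}})$.

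Next I would assemble these into the element matrices. Since the analysis assumes affine elements, on each $\kappa \in \mathcal{T}_p$ the element mass matrix is obtained from the reference matrix by a $p$-independent congruence: for the scalar spaces ($\Vp$, $\Ypm$, $\Zp$) it is a positive scalar multiple of a threefold tensor product $M_{\mathcal{I}_p}^{\otimes 3}$ or $M_{\mathcal{H}_{p-1}}^{\otimes 3}$, while for the vector spaces ($\Wp$, $\Xp$) the Piola transform introduces a constant SPD metric $G_\kappa$ (for instance $G_\kappa = J_\kappa^{-1} J_\kappa^{-T}$ in the $\Hcurl$ case), so that, up to a positive scalar, $v^\top M^\kappa v = |\det J_\kappa| \int_{\widehat{\kappa}} \widehat{\bm{u}}^\top G_\kappa \widehat{\bm{u}} \, d\widehat{x}$ for the field $\widehat{\bm{u}}$ with coefficient vector $v$. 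Bounding the integrand between $\lambda_{\min}(G_\kappa) |\widehat{\bm{u}}|^2$ and $\lambda_{\max}(G_\kappa) |\widehat{\bm{u}}|^2$ shows $M^\kappa \sim M^\kappa_I$, where $M^\kappa_I$ is the identity-metric matrix; the latter is block diagonal, each block a tensor product of $M_{\mathcal{I}_p}$'s and $M_{\mathcal{H}_{p-1}}$'s dictated by the patterns in $\mathcal{I}_p^{\curl}$ and $\mathcal{I}_p^{\div}$. Because spectral equivalence to a diagonal is preserved under tensor products (the equivalence constants simply multiply, and $\diag(A_1 \otimes A_2 \otimes A_3) = \diag A_1 \otimes \diag A_2 \otimes \diag A_3$), each block, and hence $M^\kappa_I$ and in turn $M^\kappa$, is equivalent to a positive diagonal matrix with $p$-independent constants. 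The organizing observation then yields $M^\kappa \sim \diag(M^\kappa)$ elementwise, and since assembling element matrices preserves diagonal equivalence with the same constants ($\sum_\kappa c \, \diag M^\kappa \preceq \sum_\kappa M^\kappa \preceq \sum_\kappa C \, \diag M^\kappa$), we conclude $M_p \sim D_p$.

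The step I expect to be the main obstacle is the one-dimensional histopolatory estimate $M_{\mathcal{H}_{p-1}} \sim \diag(M_{\mathcal{H}_{p-1}})$, since the histopolatory basis lacks the explicit rank-one structure available in the interpolatory case; the argument above sidesteps a direct computation by routing through the exactly diagonal low-order histopolation mass matrix and the established equivalence \eqref{eq:1d-l2-histop-equiv}. A secondary subtlety is the component coupling introduced by the metric $G_\kappa$ in the $\Hcurl$ and $\Hdiv$ cases, which would otherwise spoil a naive blockwise argument; bounding $\widehat{\bm{u}}^\top G_\kappa \widehat{\bm{u}}$ between its extreme eigenvalue multiples of $|\widehat{\bm{u}}|^2$ keeps the constants $p$-independent, depending on the element shape only through the condition number $\kappa_2(G_\kappa)$.
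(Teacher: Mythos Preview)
Your proof is correct and rests on the same key observation as the paper's --- that spectral equivalence of an SPD matrix to \emph{some} positive diagonal forces equivalence to its own diagonal, via testing on Cartesian basis vectors. The paper's argument, however, is much shorter because it works globally rather than rebuilding the element-level tensor-product and assembly machinery: it invokes the already-proven equivalence $M_p \sim M_h$ from \Cref{thm:spectral-equivalence}, uses the standard fact that the lowest-order mass matrix satisfies $M_h \sim D_h$, deduces $D_p \sim D_h$ from $M_p \sim M_h$ by the same basis-vector trick, and chains $M_p \sim M_h \sim D_h \sim D_p$. Your route essentially re-derives the mass-matrix part of \Cref{thm:spectral-equivalence} element by element (and, for the 1D interpolatory block, even substitutes the Teukolsky rank-one fact where the low-order equivalence would already suffice); this is valid but redundant given the paper's earlier results. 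What your version buys is self-containment and an explicit accounting of the Piola-induced metric $G_\kappa$ in the vector cases, which the paper's four-line proof leaves implicit inside \Cref{thm:spectral-equivalence}.
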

\begin{proof}
   Let $M_h$ denote the mass matrix defined on the corresponding low-order space.
   By \Cref{thm:spectral-equivalence}, $M_p \sim M_h$.
   But $M_h$ is the standard mass matrix using the lowest-order basis, which is spectrally equivalent to its diagonal, $D_h = \diag(M_h)$.
   Since $M_p \sim M_h$, we also have $D_p \sim D_h$, and so $M_p \sim M_h \sim D_h \sim D_p$.
\end{proof}

\subsection{Discontinuous Galerkin discretizations}

We consider classical AMG methods applied to the low-order-refined DG discretization described in \Cref{sec:dg}.
Note that the graph Laplacian $\L$ defined by \eqref{eq:graph-laplacian} is an M-matrix, for which the convergence of classical algebraic multigrid methods is well-studied \cite{Ruge1987}.
As in the preceding sections, it is then expected that AMG applied to $\L$ will result in convergence that is independent of $h$ and $p$ (modulo the low-order mesh anisotropy).
In this section, we show that classical AMG applied to $\L$ also converges independently of the penalty parameter $\eta$.
We relate this result to the family of preconditioners that use an associated $H^1$-conforming discretization to precondition discontinuous Galerkin methods \cite{Dobrev2006,Antonietti2016,Pazner2020a}.

Recalling the language of \Cref{rem:graph-laplacian}, we consider the graph $\G = (\V, \E)$, and its associated weighted graph Laplacian $\L$.
The edges $\e \in \E$ can be categorized as either \textit{interior edges} $\e \in \E^\circ$, in which case the associated weight is given by \eqref{eq:sigma-int}, or as \textit{boundary edges} $\e \in \E^\partial$, in which case the associated weight is given by \eqref{eq:sigma-bdr}.
The weights associated with interior edges are independent of $\eta$, whereas the weights associated with boundary edges scale linearly with $\eta$.
In the following, we assume that $\eta \gg 1$, i.e.\ the boundary weights dominate the interior weights, and so we will say that two vertices are \textit{strongly connected} if there exists a boundary edge connecting them.

We partition the graph $\G$ into a set of disjoint \textit{strongly connected components}
\[
   \G = \G^{(1)} \cup \G^{(2)} \cup \cdots \cup \G^{(n)} \cup \G^{\circ},
\]
where any two vertices of $\G^{(i)} = (\V^{(i)}, \E^{(i)})$ must be connected by a path consisting of boundary edges.
The strongly connected components of $\G$ correspond to groups of degrees of freedom lying on distinct mesh entities of the coarse mesh $\mathcal{T}_p$.
For example, all the degrees of freedom that are coincident with a mesh vertex belong to the same strongly connected component.
Likewise, the two coincident degrees of freedom lying on the interior of a mesh face belong to the same connected component.
Any degree of freedom lying in the interior of a mesh element $\kappa \in \mathcal{T}_p$ has no strong connections, and such vertices are included in the interior component $\G^{\circ}$.
The number of vertices in a given strongly connected component of $\G$ is bounded by the valence of the coarse mesh $\mathcal{T}_p$, which we assume to be $\mathcal{O}(1)$.

Classical algebraic multigrid methods partition the vertices $\V$ of the graph into coarse (C) points and fine (F) points.
Let $P = \begin{pmatrix} W \\ I \end{pmatrix}$ denote the C-to-F interpolation operator.
Let $Q = P (P^\T P)^{-1} P^\T$ denote the orthogonal projection onto the range of $P$.
Because of the assumption that $\eta \gg 1$ the operator $Q$ is decoupled across the strongly connected components of $\G$.
Let $Q^{(i)}$ and $Q^\circ$ denote the projections corresponding to the subgraphs $\G^{(i)}$ and $\G^\circ$, respectively.
$Q$ is then given as the product of these operators (since the projections are decoupled, this product is commutative).
The operators $Q^{(i)}$ possess two important properties:
\begin{enumerate}[label=(P\arabic*)]
   \item Since the AMG interpolation $P$ preserves constants, $Q^{(i)}$ has row-sum equal to one. \label{item:row-sum}
   \item Since $Q^{(i)}$ is an orthogonal projection, $| Q^{(i)}_{jk} | \leq 1$ for all $j, k$. \label{item:l2-norm}
\end{enumerate}
We proceed to show that the coarse grid defined by the C-points, together with Jacobi relaxation, results in a stable decomposition independent of $\eta$, and hence uniform AMG convergence.
In what follows, let $\D$ denote the diagonal of $\L$.

\begin{lem} \label{lem:interior-decomposition}
   Let $u$ be given.
   The decomposition $u = v^\circ + w^\circ$, where $w^\circ = Q^\circ u$, is stable in the sense that
   \[
      v^{\circ\T} \D v^\circ + w^{\circ\T} \L w^\circ \lesssim u^\T \L u,
   \]
   where the implied constant is independent of the penalty parameter $\eta$.
\end{lem}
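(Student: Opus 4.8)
The plan is to exploit the fact that the interior component $\G^\circ$ carries no boundary edges, so that every quantity attached to it is independent of the penalty parameter $\eta$. Throughout I would use the graph-Laplacian energy identity $u^\T \L u = \sum_{(i,j)\in\E} w_{ij}(u_i - u_j)^2$ and split the sum into interior edges $\E^\circ$, whose weights are $\eta$-independent by \eqref{eq:sigma-int}, and boundary edges $\E^\partial$, whose weights scale linearly with $\eta$ by \eqref{eq:sigma-bdr}. Writing $Q^\circ$ as the local orthogonal projection on the block $\V^\circ$ and the identity elsewhere, the decomposition $w^\circ = Q^\circ u$, $v^\circ = (I-Q^\circ)u$ has two structural features I would record first: $v^\circ$ is supported on $\V^\circ$, where it equals $u$ minus its interior coarse projection, while $w^\circ$ coincides with $u$ at every vertex outside $\V^\circ$. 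Since an interior vertex is incident only to interior edges, both $\D|_{\V^\circ}$ and $Q^\circ$ depend solely on $\eta$-independent weights.

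I would first reduce the coarse term $w^{\circ\T}\L w^\circ$ to the smoother term. Expanding by the energy identity, any edge with both endpoints outside $\V^\circ$---in particular every boundary edge---satisfies $w^\circ_i - w^\circ_j = u_i - u_j$, so these contributions, which carry all of the $\eta$-dependence, appear identically in $u^\T\L u$. Every remaining edge is an interior edge touching $\V^\circ$; there I would use $(w^\circ_i - w^\circ_j)^2 \le 2(u_i - u_j)^2 + 2(v^\circ_i - v^\circ_j)^2$ together with the elementary bound $\sum_{(i,j)\in\E} w_{ij}(v^\circ_i - v^\circ_j)^2 \le 2\, v^{\circ\T}\D v^\circ$, valid for any vector on a weighted graph. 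This yields $w^{\circ\T}\L w^\circ \lesssim u^\T\L u + v^{\circ\T}\D v^\circ$ with an $\eta$-independent constant, so that it remains only to control the smoother term, after which the two estimates combine to give the claim.

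The crux is therefore the estimate $v^{\circ\T}\D v^\circ \lesssim u^\T\L u$, i.e.\ $\sum_{i\in\V^\circ}\D_{ii}(v^\circ_i)^2 \lesssim \sum_{(i,j)\in\E^\circ} w_{ij}(u_i - u_j)^2$, a discrete Poincar\'e-type inequality. This is where properties~\ref{item:row-sum} and~\ref{item:l2-norm} enter: because $Q^\circ$ reproduces constants (property~\ref{item:row-sum}), the complement $v^\circ=(I-Q^\circ)u$ annihilates the constant mode locally, so constant shifts of $u$ may be subtracted for free; because $Q^\circ$ is an orthogonal projection it is $\ell^2$-stable (property~\ref{item:l2-norm}), so the nodal values of $v^\circ$ are controlled by those of $u$ taken modulo constants. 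I would establish the resulting nodal-to-energy norm equivalence on the local patches underlying $Q^\circ$ and then sum; the decisive point is that, since $\D|_{\V^\circ}$ and $Q^\circ$ involve only the interior weights \eqref{eq:sigma-int}, the equivalence constant is independent of $\eta$, which is all that the lemma requires. I expect this Poincar\'e step---making the nodal-to-energy passage while keeping the constant uniform in $\eta$---to be the main obstacle; the remaining work is the bookkeeping of the previous paragraph, which isolates the $\eta$-dependent boundary contributions and shows that they enter both sides identically.
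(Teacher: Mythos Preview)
Your overall structure matches the paper's: split the $\L$-energy over $\E^\partial$ and $\E^\circ$, observe that $w^\circ$ agrees with $u$ on every vertex outside $\V^\circ$ so the $\eta$-dependent boundary-edge contributions cancel exactly, and then argue that what remains involves only $\eta$-independent interior weights. The paper is in fact terser than you: it simply notes that both $\sum_{\E^\circ} w_{ij}(w^\circ_i-w^\circ_j)^2$ and $\sum_{\E^\circ} w_{ij}(u_i-u_j)^2$ are $\eta$-free expressions, so whatever stability constant relates them is automatically $\eta$-independent, and likewise for $v^{\circ\T}\D v^\circ$ since $\D|_{\V^\circ}$ and $v^\circ$ depend only on interior weights. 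The paper does not prove the interior stability itself; it defers that to standard AMG theory for M-matrices in the remark immediately following the lemma. Your explicit reduction $w^{\circ\T}\L w^\circ \lesssim u^\T\L u + v^{\circ\T}\D v^\circ$ via $(a-b)^2\le 2(a-c)^2+2(c-b)^2$ and $\sum w_{ij}(v_i-v_j)^2 \le 2\,v^\T\D v$ is a valid and slightly cleaner way to isolate the single remaining obstacle.

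The one place to be careful is your Poincar\'e step. You invoke properties~\ref{item:row-sum} and~\ref{item:l2-norm} for $Q^\circ$, but in the paper those properties are asserted only for the operators $Q^{(i)}$ on the boundary components, each of which has $\mathcal{O}(1)$ vertices. The interior component $\G^\circ$ is large (all DOFs in the macro-element interiors), so the entrywise bound $|Q^\circ_{jk}|\le 1$ together with a sum over $\V^\circ$ does not by itself yield a useful nodal-to-energy estimate; you would be summing $\mathcal{O}(p^d)$ terms. What actually carries this step is standard AMG convergence theory for the interior M-matrix, which is precisely what the paper appeals to in its post-lemma remark. So your identification of the Poincar\'e step as the ``main obstacle'' is accurate, but the tool you reach for (\ref{item:row-sum}/\ref{item:l2-norm} on local patches) is not the right one here; the paper's own proof does not attempt it either and simply records $\eta$-independence.
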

\begin{proof}
   Note that for any interior edge $(i,j) \in \E^\circ$, the associated weight $w_{ij}$ is independent of $\eta$, and so
   \[
      \sum_{(i,j) \in \E^\circ} w_{ij} (w_i - w_j)^2
         \lesssim \sum_{(i,j) \in \E^\circ} w_{ij} (u_i - u_j)^2,
   \]
   where the above expressions have no dependence of $\eta$.
   Since the subgraph $\G^\circ \subseteq \G$ consists of those vertices that belong to no boundary edges, we have that $w_i = u_i$ for any $i \in \V^\partial = \V \setminus \V^\circ$.
   Therefore, writing $\E = \E^\circ \cup E^\partial$,
   \begin{align*}
      w^{\circ\T} \L w^\circ =
      \sum_{\e_{ij} \in \E} w_{ij} (w_i^\circ - w_j^\circ)^2
         &= \sum_{\e_{ij} \in \E^\partial} w_{ij} (u_i - u_j)^2
         + \sum_{\e_{ij} \in \E^\circ} w_{ij} (w_i^\circ - w_j^\circ)^2 \\
         &\lesssim \sum_{\e_{ij} \in \E} w_{ij} (u_i - u_j)^2 = u^\T \L u.
   \end{align*}
   Similarly, the diagonal entries $w_{ii}$ associated with the vertices $V^\circ$ are independent of $\eta$, and $v_i = 0$ for any $i \in \V^\partial$, so $v^\T \D v \lesssim u^\T \L u$, and the conclusion follows.
\end{proof}

\begin{rem}
   The matrix associated with the subgraph $\G^\circ$ is an M-matrix whose entries do not depend on the penalty parameter $\eta$.
   Therefore, the standard algebraic multigrid theory for M-matrices applies, and so the decomposition of \Cref{lem:interior-decomposition} is expected to be stable not only with respect to the penalty parameter $\eta$, but also other relevant discretization parameters such as mesh size and coefficients.
\end{rem}

\begin{lem} \label{lem:boundary-projection}
   Let $u$ be given, and let $w^{(i)} = Q^{(i)} u$.
   Then, for all $j$,
   \[
      (w_j^{(i)} - u_j)^2 \lesssim \sum_{k \in \V^{(i)}} (u_k - u_j)^2.
   \]
\end{lem}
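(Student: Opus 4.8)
The plan is to exploit the two structural properties of the projection operator $Q^{(i)}$ established just before the lemma, namely \ref{item:row-sum} (row-sum equal to one) and \ref{item:l2-norm} ($|Q^{(i)}_{jk}| \leq 1$). The quantity $w_j^{(i)} = \sum_k Q^{(i)}_{jk} u_k$ is a weighted combination of the nodal values $u_k$ for $k \in \V^{(i)}$, and since the weights sum to one, we may insert $u_j = \sum_k Q^{(i)}_{jk} u_j$ for free. First I would write
\[
   w_j^{(i)} - u_j = \sum_{k \in \V^{(i)}} Q^{(i)}_{jk} (u_k - u_j),
\]
which is the key algebraic identity: the row-sum property turns the projection applied to $u$ into a projection applied to the \emph{differences} $u_k - u_j$, eliminating any dependence on the overall magnitude of $u$ and leaving only the local variation within the strongly connected component.

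From this identity the estimate follows by a standard Cauchy--Schwarz argument. Squaring and applying the discrete Cauchy--Schwarz inequality gives
\[
   (w_j^{(i)} - u_j)^2
      = \left( \sum_{k \in \V^{(i)}} Q^{(i)}_{jk} (u_k - u_j) \right)^2
      \leq \left( \sum_{k \in \V^{(i)}} \big(Q^{(i)}_{jk}\big)^2 \right)
           \left( \sum_{k \in \V^{(i)}} (u_k - u_j)^2 \right).
\]
By property \ref{item:l2-norm} each entry satisfies $\big(Q^{(i)}_{jk}\big)^2 \leq 1$, so the first factor is bounded by the cardinality $|\V^{(i)}|$. The crucial point is that this cardinality is uniformly bounded: as noted in the text, the number of vertices in any strongly connected component of $\G$ is controlled by the valence of the coarse mesh $\mathcal{T}_p$, which is assumed to be $\mathcal{O}(1)$. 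Hence the first factor is $\lesssim 1$ with a constant depending only on the mesh, and in particular independent of the penalty parameter $\eta$ and of the polynomial degree $p$. This yields exactly the claimed bound.

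I expect the main subtlety to be ensuring the constant has the right uniformity, rather than the algebra, which is routine. The $\eta$-independence is already guaranteed because $Q^{(i)}$ is an orthogonal projection onto the range of the interpolation operator restricted to the $i$th strongly connected component, and the decoupling of $Q$ across components (from the assumption $\eta \gg 1$) means $Q^{(i)}$ does not see the interior edge weights at all; its entries are pure geometric quantities of the projection, bounded by $1$ independent of $\eta$. The only place a constant enters is through $|\V^{(i)}|$, and this is where the bounded-valence hypothesis on the coarse mesh is essential. A careful writeup should therefore state plainly that the implied constant depends only on the maximum number of vertices in a strongly connected component, i.e.\ on the coarse-mesh valence, and invoke properties \ref{item:row-sum} and \ref{item:l2-norm} by name at the two points where they are used.
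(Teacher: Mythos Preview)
Your proposal is correct and follows essentially the same route as the paper: use \ref{item:row-sum} to rewrite $w_j^{(i)}-u_j=\sum_{k\in\V^{(i)}}Q^{(i)}_{jk}(u_k-u_j)$, then bound the square of the sum by the sum of squares (absorbing $|\V^{(i)}|=\mathcal{O}(1)$ into the $\lesssim$) and apply \ref{item:l2-norm}. The only cosmetic difference is that you invoke Cauchy--Schwarz to produce the factor $\sum_k (Q^{(i)}_{jk})^2\le |\V^{(i)}|$, whereas the paper first uses $(\sum a_k)^2\lesssim\sum a_k^2$ and then applies \ref{item:l2-norm} termwise; the two are equivalent.
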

\begin{proof}
   Recalling the properties of the operator $Q^{(i)}$,
   \[
      \begin{aligned}
      (w_j^{(i)} - u_j)^2
      &= \left( \sum_{k\in V^{(i)}} Q^{(i)}_{jk} u_k - u_j \right)^2
      = \left( \sum_{k\in V^{(i)}} Q^{(i)}_{jk} ( u_k - u_j ) \right)^2 \quad&& \text{by \ref{item:row-sum}} \\
      &\lesssim \sum_{k\in V^{(i)}} \left( Q^{(i)}_{jk} ( u_k - u_j ) \right)^2
      \leq \sum_{k\in V^{(i)}} ( u_k - u_j ) ^2 \quad&& \text{by \ref{item:l2-norm}.}
      \end{aligned}
   \]
\end{proof}

\begin{lem} \label{lem:boundary-decomposition}
   Let $Q^\partial = \prod_i Q^{(i)}$, and let $u$ be given.
   Then, the decomposition $u = v + w$, where $w = Q^\partial u$ is stable in the sense that
   \[
      v^\T \D v + w^\T \L w \lesssim u^\T \L u,
   \]
   where the implied constant is independent of the penalty parameter $\eta$.
\end{lem}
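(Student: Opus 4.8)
The plan is to reduce the entire estimate to a single bound on the Jacobi term $v^\T \D v$, and then to control that term one strongly connected component at a time using \Cref{lem:boundary-projection}. First I would record the structure of the decomposition: since the interior vertices in $\V^\circ$ are fixed by every $Q^{(i)}$, we have $v_j = 0$ for $j \in \V^\circ$, while $v_j = u_j - w_j^{(i)}$ whenever $j \in \V^{(i)}$. In particular $v$ is supported on the boundary components $\V^{(i)}$, and $w_j - u_j = -v_j$ at every vertex.

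The second step is a purely algebraic simplification of the energy term. Applying the elementary inequality $(w_j - w_k)^2 \lesssim (w_j - u_j)^2 + (u_j - u_k)^2 + (u_k - w_k)^2$ edge-by-edge and reorganizing the resulting sum by vertices gives
\[
   w^\T \L w \lesssim \sum_{\e_{jk} \in \E} w_{jk} (u_j - u_k)^2 + \sum_j \Big( \sum_{k \sim j} w_{jk} \Big) (w_j - u_j)^2 = u^\T \L u + v^\T \D v,
\]
since $\sum_{k \sim j} w_{jk} = \D_{jj}$. Thus it suffices to prove $v^\T \D v \lesssim u^\T \L u$, after which the claimed bound on $v^\T \D v + w^\T \L w$ follows immediately.

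For the remaining estimate I would work component-wise, writing $v^\T \D v = \sum_i \sum_{j \in \V^{(i)}} \D_{jj} v_j^2$. Within a fixed component $\G^{(i)}$, all boundary weights \eqref{eq:sigma-bdr} are mutually comparable — call a representative value $\omega^{(i)}$ — and since the valence of $\mathcal{T}_p$, and hence the size of $\V^{(i)}$ and its vertex degrees, is $\mathcal{O}(1)$, the diagonal is dominated by these boundary weights for $\eta \gg 1$, so $\D_{jj} \lesssim \omega^{(i)}$. Using $v_j^2 \lesssim \sum_{k \in \V^{(i)}}(u_k - u_j)^2$ from \Cref{lem:boundary-projection}, together with a discrete Poincar\'e inequality on the connected component $\G^{(i)}$ (whose constant is uniform because only finitely many component topologies occur), I would bound $\sum_{j,k \in \V^{(i)}}(u_k - u_j)^2 \lesssim \sum_{\e_{jk} \in \E^{(i)}}(u_j - u_k)^2$. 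Combining these and recalling $w_{jk} \approx \omega^{(i)}$ on $\E^{(i)}$ yields
\[
   \sum_{j \in \V^{(i)}} \D_{jj} v_j^2
      \lesssim \omega^{(i)} \sum_{\e_{jk} \in \E^{(i)}} (u_j - u_k)^2
      \approx \sum_{\e_{jk} \in \E^{(i)}} w_{jk}(u_j - u_k)^2.
\]
Summing over all components and noting $\bigcup_i \E^{(i)} = \E^\partial$ gives $v^\T \D v \lesssim \sum_{\e_{jk} \in \E^\partial} w_{jk}(u_j - u_k)^2 \leq u^\T \L u$, with every constant independent of $\eta$.

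The main obstacle I anticipate is justifying $\D_{jj} \lesssim \omega^{(i)}$ with a constant uniform in both $\eta$ and $p$: this rests on the facts that the boundary weights within a single component are comparable and that the component sizes and valences stay $\mathcal{O}(1)$ as $p$ grows. The discrete Poincar\'e step is routine once this bounded-size structure is established, and the reduction to $v^\T \D v$ is purely algebraic; everything $\eta$-dependent is confined to the boundary weights, which is precisely what makes the final constant $\eta$-independent.
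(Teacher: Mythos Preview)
Your argument is correct and uses essentially the same ingredients as the paper: the edge-wise expansion $(w_j - w_k)^2 \lesssim v_j^2 + (u_j - u_k)^2 + v_k^2$, \Cref{lem:boundary-projection}, and the $\mathcal{O}(1)$ size of the strongly connected components. Your reduction $w^\T \L w \lesssim u^\T \L u + v^\T \D v$ is a mild streamlining---the paper instead bounds $w^\T \L w$ and $v^\T \D v$ separately, applying \Cref{lem:boundary-projection} twice---and you are more explicit than the paper about the comparability of boundary weights within a component and the discrete Poincar\'e step, both of which the paper absorbs into the one-line appeal to ``the number of vertices in each $\G^{(i)}$ is $\mathcal{O}(1)$.''
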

\begin{proof}
   Using the result of \Cref{lem:boundary-projection}
   \begin{align*}
      w^\T \L w
         &= \sum_{(i,j) \in \E} w_{ij} (w_i - w_j)^2
         \lesssim \sum_{(i,j) \in \E} w_{ij} \left(
            (u_i - u_j)^2 + \left( w_i - u_i \right)^2 + \left( w_j - u_j \right)^2
         \right)\\
         &\lesssim \sum_{(i,j) \in \E} w_{ij} \left(
               (u_i - u_j)^2
               + \sum_{k \in \V^{(i)}} (u_k - u_i)^2
               + \sum_{k \in \V^{(j)}} (u_k - u_j)^2
            \right)
   \end{align*}
   The number of vertices in each of the subgraphs $\G^{(i)}$ is $\mathcal{O}(1)$, so
   \[
      \sum_{(i,j) \in \E} w_{ij} \sum_{k \in \V^{(i)}} (u_k - u_i)^2
      \lesssim \sum_{(i,j) \in \E} w_{ij} (u_i - u_j)^2,
   \]
   and we can conclude that
   \[
      w^\T \L w \lesssim \sum_{(i,j) \in \E} w_{ij} (u_i - u_j)^2 = u^\T \L u.
   \]
   We now turn our attention to the term $v^\T \D v$.
   Note that
   \[
      v^\T \D v = \sum_i w_i v_i^2 = \sum_i w_i (w_i - u_i)^2.
   \]
   By \Cref{lem:boundary-projection},
   \[
      \sum_i w_i (w_i - u_i)^2
      \lesssim \sum_i w_i \sum_{k \in \V^{(i)}} (u_k - u_i)^2. \myqed
   \]
\end{proof}

\begin{thm} \label{thm:decomposition}
   Let $u$ be given.
   Then, the decomposition
   \begin{equation}
      u = v + w \qquad\text{where}\qquad w = Q u
   \end{equation}
   is stable in the sense that
   \[
      v^\T \D v + w^\T \L w \lesssim u^\T \L u,
   \]
   where the implied constant is independent of $\eta$.
\end{thm}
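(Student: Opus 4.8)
The plan is to assemble the two preceding lemmas by exploiting the commuting factorization $Q = Q^\partial Q^\circ$, which holds because $Q$ is block-diagonal across the strongly connected components of $\G$ when $\eta \gg 1$. First I would introduce the interior split $w^\circ = Q^\circ u$, $v^\circ = u - w^\circ$, and then the boundary split of $w^\circ$, namely $w = Q^\partial w^\circ$ and $\tilde v = w^\circ - w = (I - Q^\partial) w^\circ$. Since $Q = Q^\partial Q^\circ$, we have $w = Q u$ as required, and the residual in the statement telescopes as $v = u - w = v^\circ + \tilde v$.

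The key observation is that $v^\circ$ and $\tilde v$ have disjoint supports: $v^\circ = (I - Q^\circ) u$ vanishes off the interior vertex set $\V^\circ$, since $Q^\circ$ acts as the identity on the boundary components, whereas $\tilde v = (I - Q^\partial) w^\circ$ vanishes on $\V^\circ$, since $Q^\partial$ acts as the identity there. Because $\D$ is diagonal, this yields the exact identity $v^\T \D v = v^{\circ\T} \D v^\circ + \tilde v^\T \D \tilde v$, with no cross terms. I would then chain the two stability estimates: applying \Cref{lem:boundary-decomposition} with $w^\circ$ in place of $u$ (so that its residual and projection become $\tilde v$ and $w$, respectively) gives $\tilde v^\T \D \tilde v + w^\T \L w \lesssim w^{\circ\T} \L w^\circ$, while \Cref{lem:interior-decomposition} gives $v^{\circ\T} \D v^\circ + w^{\circ\T} \L w^\circ \lesssim u^\T \L u$. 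Combining these with the disjoint-support identity produces
\[
   v^\T \D v + w^\T \L w
   = v^{\circ\T} \D v^\circ + \tilde v^\T \D \tilde v + w^\T \L w
   \lesssim v^{\circ\T} \D v^\circ + w^{\circ\T} \L w^\circ
   \lesssim u^\T \L u,
\]
with all implied constants independent of $\eta$.

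Given the two lemmas, there is no substantial analytic obstacle remaining; the main work is the bookkeeping of this telescoping decomposition. The one point requiring care is the factorization of $Q$ into the commuting product $Q^\partial Q^\circ$, together with the observation that the fine-space residuals $v^\circ$ and $\tilde v$ live on disjoint vertex sets. This disjointness, which follows directly from the block structure of $Q$ over the strongly connected components, is exactly what allows the interior and boundary estimates to be composed additively, so that the final constant inherits the $\eta$-independence of the two lemmas without any further loss.
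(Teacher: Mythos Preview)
Your proposal is correct and follows essentially the same two-step telescoping as the paper: split off $w^\circ = Q^\circ u$ via \Cref{lem:interior-decomposition}, then split $w^\circ$ via \Cref{lem:boundary-decomposition}, and chain the estimates. The one refinement you add is the disjoint-support observation, which turns the paper's inequality $(v^\partial + v^\circ)^\T \D (v^\partial + v^\circ) \lesssim v^{\partial\T} \D v^\partial + v^{\circ\T} \D v^\circ$ into an exact identity; this is a nice sharpening but does not change the structure of the argument.
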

\begin{proof}
   The stability of this decomposition follows from the stability of the interior and boundary decompositions, demonstrated above.
   First, consider the decomposition $u = w^\circ + v^\circ$, where $w^\circ = Q^\circ u$.
   By \Cref{lem:interior-decomposition}, this decomposition is stable, i.e.
   \begin{equation} \label{eq:interior-decomp}
      v^{\circ\T} \D v^\circ + w^{\circ\T} \L w^\circ \lesssim u^\T \L u.
   \end{equation}
   Then, further decompose $w^\circ = w + v^\partial$, where $w = Q^\partial w^\circ$, and define $v$ by $v = v^\partial + v^\circ$.
   By \Cref{lem:boundary-decomposition}, this decomposition is stable in the sense that
   \begin{equation} \label{eq:boundary-decomp}
      v^{\partial\T} \D v^\partial + w \L w \lesssim w^{\circ\T} \L w^\circ.
   \end{equation}
   Therefore,
   \begin{equation*}
      \begin{aligned}
         v^\T \D v + w^\T \L w
         &= (v^\partial + v^\circ)^\T \D (v^\partial + v^\circ) + w^\T \L w \\
         &\lesssim v^{\partial\T} \D v^\partial + v^{\circ\T} \D v^\circ + w^\T \L w \\
         &\lesssim v^{\circ\T} \D v^\circ + w^{\circ\T} \L w^\circ && \text{by \eqref{eq:boundary-decomp}}\\
         &\lesssim u^\T \L u && \text{by \eqref{eq:interior-decomp},}
      \end{aligned}
   \end{equation*}
   and the result follows.
\end{proof}

As a consequence of \Cref{thm:decomposition}, classical algebraic multigrid applied to the low-order DG discretization described above will result in uniform convergence, independent of the DG penalty parameter $\eta$.
Standard AMG theory regarding the robustness of the convergence for M-matrices with respect to discretization parameters such as mesh size and variation in the coefficients also carries over to this case.
This convergence theory is verified numerically in \Cref{sec:dg-results}.

\begin{rem}[CG preconditioning for DG methods]
   The use of continuous Galerkin discretizations (together with a smoothing operation, such as Jacobi or Gauss--Seidel) as preconditioners for discontinuous Galerkin methods has been studied extensively in the literature \cite{Dobrev2006,Antonietti2016,Pazner2020a,Pazner2021b}.
   \Cref{thm:decomposition} gives an alternative, elementary, proof of the optimality of CG preconditioning for DG methods.
   Each subgraph $\G^{(i)}$ corresponds to a ``duplicated'' DG degree of freedom in the high-order problem, and hence also maps to a single $H^1$ degree of freedom.
   Note that defining the operators $Q^{(i)}$ as the Oswald averaging operators (mapping $u_i$ to $(\# \V^{(i)})^{-1} \sum u_i$, cf.~\cite{Burman2007,Oswald1993}), properties \ref{item:l2-norm} and \ref{item:row-sum} are satisfied.
   This can be viewed as using the $H^1$-conforming subspace of the DG finite element space as an AMG coarse space.
   The conclusions of \Cref{thm:decomposition} hold for this case, showing that the resulting two-level method with Jacobi smoothing results in uniform convergence.
\end{rem}

\section{Numerical results} \label{sec:numerical-results}

The high-order and low-order-refined discretizations described in this work were implemented in the MFEM open-source finite element software library (cf.\ \cite{Anderson2020}, \url{https://mfem.org}).
The only modification made to MFEM's existing $\Hcurl$ and $\Hdiv$ discretizations was the implementation of the histopolation basis; these basis functions can be computed in a straightforward manner using partial sums of the derivatives of the standard Lagrange basis functions, as described in \Cref{rem:basis}.
The high-order finite element operators were constructed using \textit{partial assembly}, such that the action of the operator is applied without assembling the corresponding matrix, using geometric factors and coefficients that are precomputed at quadrature points.
Basis function evaluation and numerical integration are performed using sum factorization.
Per degree of freedom, this technique requires $\mathcal{O}(1)$ storage and $\mathcal{O}(p)$ operations.

In the MFEM library, the high-order operator can be assembled using the interpolation--histopolation basis by choosing the basis types \texttt{BasisType::GaussLobatto} (interpolation at the Gauss--Lobatto nodes) and \texttt{BasisType::IntegratedGLL} (histopolation using the Gauss--Lobatto subcells).
The low-order-refined versions of the high-order operators can be constructed in one line of code using the \texttt{LORDiscretization} class.
Preconditioners for the high-order discretization based on the low-order-refined matrices can similarly be constructed in one line using the \texttt{LORSolver} class.
The \texttt{lor\_solvers} miniapp, and its parallel counterpart \texttt{plor\_solvers}, illustrate the construction of low-order-refined discretizations and solvers, and come distributed with MFEM's source code, available at \url{https://github.com/mfem/mfem}.

The low-order refined mass and stiffness matrices $M_h$ and $K_h$ in $\Hcurl$ and $\Hdiv$ correspond to the standard finite element discretizations using lowest-order \Nedelec and Raviart--Thomas elements, posed on a refined mesh.
While any effective preconditioner can be used for the resulting low-order system, in this work we mainly make use of the algebraic multigrid preconditioners available in \textit{hypre} \cite{Falgout2002}.
In particular, the Auxiliary-Space Maxwell (AMS) solver (cf.\ \cite{Kolev2009}) is used for $\Hcurl$ problems, and the Auxiliary-Space Divergence (ADS) solver (cf.\ \cite{Kolev2012}) is used for $\Hdiv$ problems.
Classical algebraic multigrid is used for the DG discretizations.
A key feature of the LOR preconditioning approach is that the resulting high-order solvers inherit performance benefits and scalability from the traditional low-order solver implementations.
For example, since \textit{hypre}'s AMG solvers are highly scalable on massively parallel supercomputers, and also feature GPU acceleration, the LOR-based preconditioners also enjoy favorable scalability and GPU acceleration.

\subsection{Interpolation and histopolation equivalences} \label{sec:equiv-constants}

The spectral equivalence results in this paper are consequences of the one-dimensional norm equivalences of the interpolation and histopolation operators, cf.\ \Cref{prop:1d-l2-equiv}.
In this section, we numerically estimate the constants of the $L^2$ norm equivalences of the one-dimensional interpolation and histopolation operators, $\mathcal{I}_p$, $\mathcal{I}_h$, $\mathcal{H}_{p-1}$, and $\mathcal{H}_h$ defined on Gauss--Lobatto nodes.
For the interpolation operators, we will also consider the \textit{numerically integrated} $L^2$ norm, $\| \cdot \|_{0,\NI}$, which is computed using $p+1$ (collocated) Gauss--Lobatto quadrature points.
Note that the exactly integrated $L^2$ norm $\| \cdot \|_0$ and its numerically integrated counterpart $\| \cdot \|_{0,\NI}$ are equivalent, independent of polynomial degree $p$ \cite{Canuto1982}.
The use of numerical integration (inexact quadrature) can decrease the condition number of the preconditioned system; this effect has been studied in \cite{Fischer1997,Canuto2010,Bello-Maldonado2019}.
We numerically evaluate the value of the constants in estimates of the form
\[
   c \| \mathcal{I}_h(\mathsf{u}) \|_0 \leq \| \mathcal{I}_p(\mathsf{u}) \|_0 \leq C \| \mathcal{I}_h(\mathsf{u}) \|_0.
\]
The quantity $Cc^{-1}$ is shown for polynomial degrees $2 \leq p \leq 64$ in \Cref{fig:interp-histop}.
As expected given \Cref{prop:1d-l2-equiv}, the quantity $Cc^{-1}$ remains asymptotically bounded, independent of the polynomial degree $p$.
Furthermore, the constants corresponding to the numerically integrated norms $\| \cdot \|_{0,\NI}$ are smaller than those corresponding to the fully integrated $L^2$ norm.
This indicates that it is beneficial to use collocated quadrature when assembling the low-order system.
The use of collocated quadrature for the high-order system will also lead to a better conditioned systems, however, this will result in a modified discretization that may not be desired.
These one-dimensional constants can be used to estimate the constants of the 2D and 3D equivalences, including for the gradient, curl, and divergence operators, using the results of \Cref{prop:grad-equiv,prop:curl-equiv,,prop:div-equiv}. % extra comma to prevent \cref range

\begin{figure}
   \centering
   \includegraphics{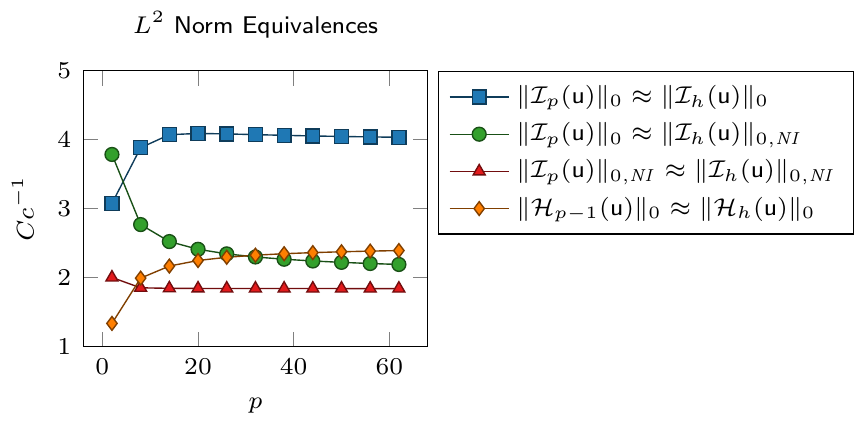}

   \caption{Constants of $L^2$ norm equivalence for the one-dimensional interpolation and histopolation operators.}
   \label{fig:interp-histop}
\end{figure}

\subsection{Single element condition numbers}

In this section, we compute condition numbers of the preconditioned mass and stiffness matrices on the reference elements in 2D and 3D, $\Omega = \refinterval^d$.
The linear system is given by the sum of the mass and stiffness matrices, $A_p = M_p + K_p$, where $M_p$ and $K_p$ are the mass and stiffness matrices corresponding to one of the finite element spaces $\Vp, \Wp, \Xp, \Zp$.
Let $A_h$ denote the corresponding low-order-refined system.
Dirichlet boundary conditions are enforced at the domain boundary.
The high-order system is integrated with $(p+1)^d$ Gauss--Lobatto quadrature points (collocated quadrature, or ``numerical integration\rlap{,}'' cf.\ \cite{Canuto2010}); this typically results in better conditioned systems, but the condition numbers still remain asymptotically bounded in the case of exact integration \cite{Fischer1997,Bello-Maldonado2019}.
The condition number of the matrix $A_h^{-1} A_p$ is reported in \Cref{fig:element-conditioning}.
Note than in 2D, the $\Hcurl$ and $\Hdiv$ spaces coincide, and so the reported condition numbers are identical.
The low-order preconditioner for the $H^1$ system results in a condition number that is bounded by $\pi^2/4$ in all cases; the bound of $\pi^2/4$ was first established for the case of low-order preconditioning of spectral methods in \cite{Haldenwang1984}.

\begin{figure}
   \centering
   \includegraphics{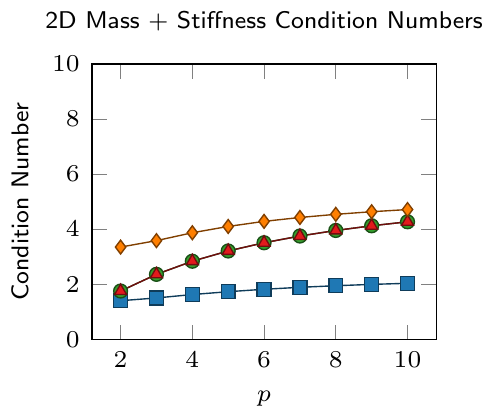}
   \includegraphics{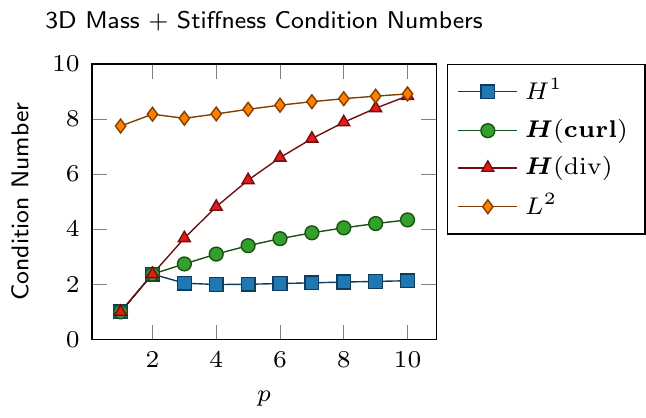}
   \caption{Condition numbers of the matrices $A_h^{-1} A_p$, where $A_p = M_p + K_p$ on the reference element $\refinterval^d$ in 2D and 3D.}
   \label{fig:element-conditioning}
\end{figure}

It is also possible to bound the single-element condition numbers given the norm equivalence constants computed in \Cref{sec:equiv-constants}.
Let $\kappa_\mathcal{I}$ denote the $L^2$ norm equivalence constant $Cc^{-1}$ associated with the interpolation operators, and similarly for $\kappa_\mathcal{H}$.
For example, in two dimensions, the high-order and low-order discrete gradient operators are connected through the relation
\[
   \mathcal{G}^2_h =
   \left(\begin{array}{cc}
      \mathcal{H}_h \mathcal{H}_{p-1}^{-1} \otimes \mathcal{I}_h \mathcal{I}_{p}^{-1}& 0 \\
      0 & \mathcal{I}_h \mathcal{I}_{p}^{-1} \otimes \mathcal{H}_h \mathcal{H}_{p-1}^{-1}
   \end{array}
   \right) \mathcal{G}^2_p.
\]
Therefore, the resulting condition number can be bounded by $\kappa_\mathcal{I} \kappa_\mathcal{H}$.
Estimates for the remaining spaces and operators can be derived similarly (with the exception of the DG interior penalty stiffness matrix, whose analysis, described in \Cref{sec:dg}, requires a different framework).
A comparison of the estimated and computed condition numbers is shown in \Cref{tab:estimate-comparison}.
In all of the cases, the estimates computed using products of the one-dimensional constants give an upper bound for the computed condition numbers.
In 2D and 3D, the estimates of the form $\kappa_\mathcal{H}^d$ are quite sharp; the estimates for the $H^1$ and $\Hcurl$ cases are more pessimistic.

\begin{table}
   \centering
   \caption{Comparison of estimated and computed condition numbers for the single element case.}
   \label{tab:estimate-comparison}

   \begin{tabular}{c|cc|cc}
      \multicolumn{5}{c}{2D Case}\\
      \toprule
      $p$ & $M_{\Vp} + K_{\Vp}$ & $\kappa_\mathcal{I}\kappa_\mathcal{H}$ & $M_{\Wp} + K_{\Wp}$ & $\kappa_\mathcal{H}^2$ \\
      \midrule
      2 & 1.41 & 2.67 & 1.76 & 1.78 \\
      4 & 1.63 & 3.18 & 2.84 & 2.86 \\
      6 & 1.82 & 3.49 & 3.51 & 3.52 \\
      8 & 1.95 & 3.68 & 3.95 & 3.96 \\
      10 & 2.04 & 3.82 & 4.27 & 4.28 \\
      \bottomrule
   \end{tabular}

   \vspace{\floatsep}

   \begin{tabular}{c|cc|cc|cc}
      \multicolumn{7}{c}{3D Case}\\
      \toprule
      $p$ & $M_{\Vp} + K_{\Vp}$ & $\kappa_\mathcal{I}^2\kappa_\mathcal{H}$ & $M_{\Wp} + K_{\Wp}$ & $\kappa_\mathcal{I}\kappa_\mathcal{H}^2$ & $M_{\Xp} + K_{\Xp}$ & $\kappa_\mathcal{H}^3$ \\
      \midrule
      2 & 2.37 & 5.33 & 2.37 & 3.56 & 2.37 & 2.37 \\
      4 & 1.99 & 5.98 & 3.10 & 5.37 & 4.82 & 4.83 \\
      6 & 2.03 & 6.48 & 3.66 & 6.54 & 6.60 & 6.61 \\
      8 & 2.08 & 6.81 & 4.05 & 7.33 & 7.88 & 7.89 \\
      10 & 2.13 & 7.05 & 4.34 & 7.89 & 8.84 & 8.84 \\
      \bottomrule
   \end{tabular}
\end{table}

\subsection{Mass matrix preconditioning} \label{sec:mass-results}

In this section, we numerically compare several options for preconditioning the high-order mass matrix.
In particular, given their simplicity, efficiency, and effectiveness for mass matrix problems, we focus on diagonal preconditioning approaches.
In light of \Cref{sec:mass-precond}, all of the diagonal preconditioners considered are spectrally equivalent to the high-order mass matrix, independent of $p$, and so the results in this section represent a numerical and empirical comparison of the constants of equivalence.

For the $H^1$ mass matrix $M_{\Vp}$, we compare Jacobi preconditioning using the nodal Gauss--Lobatto basis, which we denote ``Jacobi (Lobatto)\rlap{,}'' to ``LOR Jacobi'', which indicates using the diagonal of the low-order mass matrix $M_{\Vh}$ as a preconditioner.
For the spaces $\Wp, \Xp$, and $\Yp$, we also consider Gauss--Legendre and histopolation bases for the components for which continuity is not enforced.
The corresponding diagonal preconditioners are denoted ``Jacobi (Legendre)'' and ``Jacobi (Integrated)\rlap{,}'' respectively.
Note that the diagonal of the high-order mass matrix can be constructed efficiently in $\mathcal{O}(p^d)$ operations, without assembling the entire matrix (see, e.g.~\cite{Ronquist1987}).
Therefore, all of the preconditioning options considered in this section are suitable for the matrix-free context.
All of these options should result in uniformly well-conditioned systems, independent of the polynomial degree $p$, cf.~\Cref{sec:mass-precond}, however the constants of equivalence will be different for each of the choices.

We consider two 3D meshes: a simple Cartesian grid, and a fully unstructured hexahedral mesh (including skewed elements that are not given by affine transformations of the unit cube).
For each of the spaces $\Vp, \Wp, \Xp, \Ypm$, we iteratively solve the linear system for the high-order mass matrix with a random right-hand side to a relative tolerance of $10^{-12}$.
The iteration counts are shown in \Cref{fig:mass}.
In general, the iteration counts are larger for the unstructured mesh than for the structured grid.
Many of the preconditioners display a relatively mild preasymptotic increase in iterations with increasing $p$.
The Jacobi preconditioner with Gauss--Legendre basis typically gives rise to the smallest number of iterations.
Note that the $L^2$ mass matrix $M_{\Ypm}$ on affine elements is exactly integrated using Gauss--Legendre quadrature, and hence ``Jacobi (Legendre)'' is actually an exact solver in this case (convergence is always attained in only one iteration).

\begin{figure}
   \centering
   \begin{tabular}{ll}
      \includegraphics{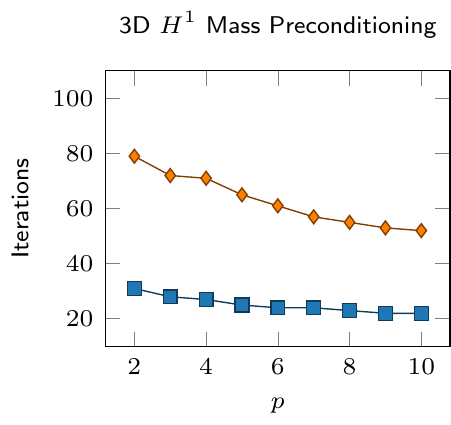} &
      \includegraphics{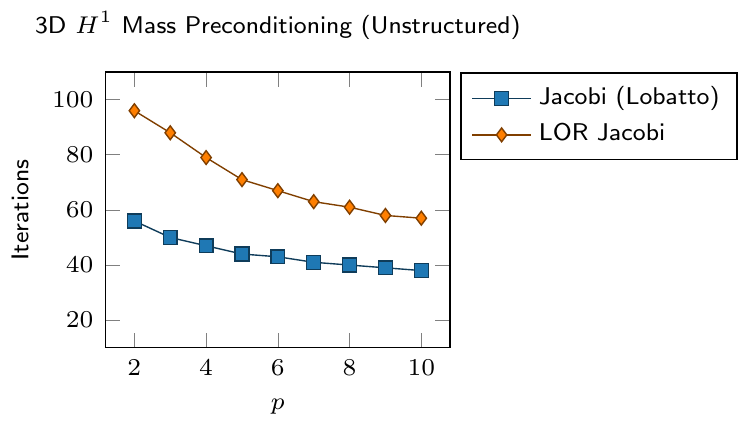} \\
      \includegraphics{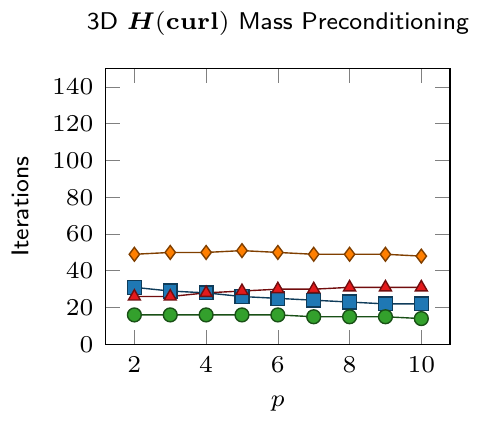} &
      \includegraphics{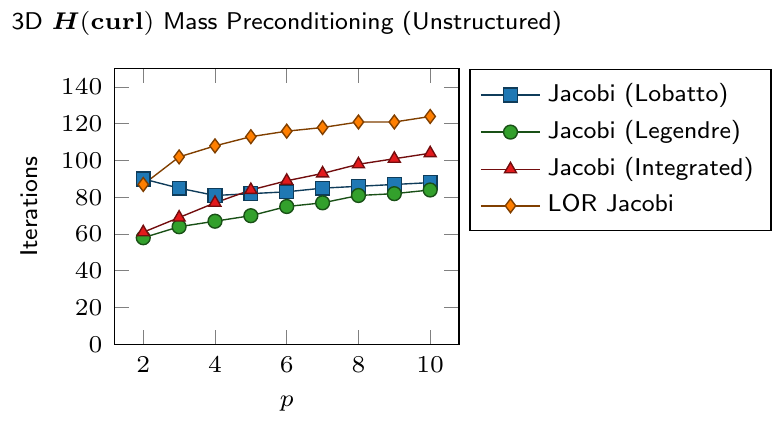} \\
      \includegraphics{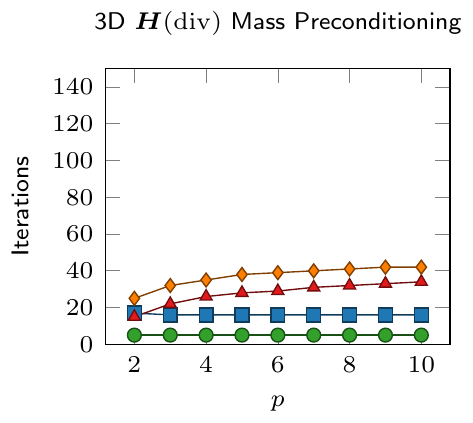} &
      \includegraphics{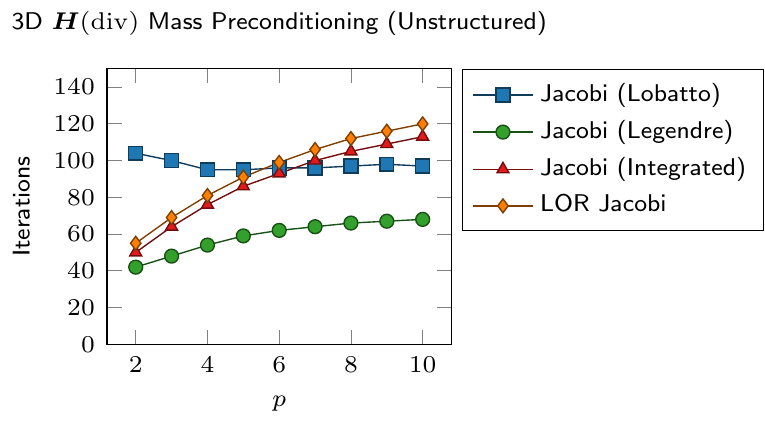} \\
      \includegraphics{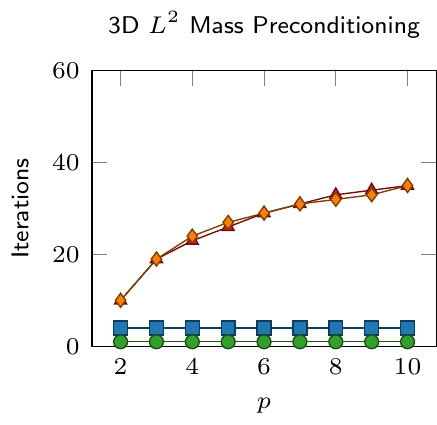} &
      \includegraphics{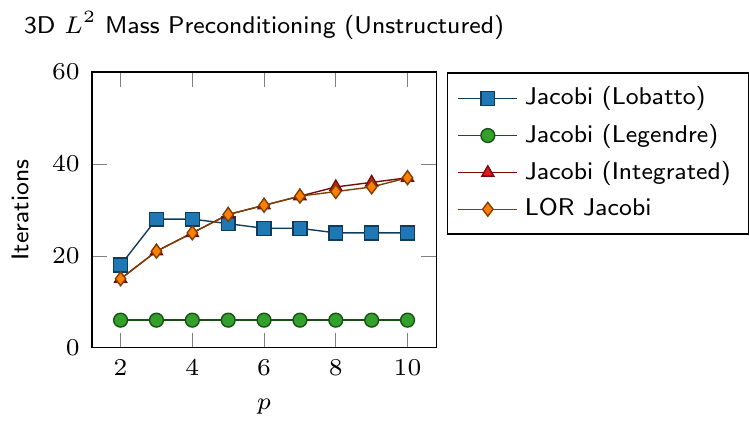}
   \end{tabular}
   \caption{
      Comparisons of conjugate gradient iterations for different diagonal preconditioning strategies for the high-order mass matrix.
   }
   \label{fig:mass}
\end{figure}

\subsection{Definite Maxwell problem: copper wire}

In this section, we consider the simulation of electromagnetic diffusion of a copper wire in air, cf.\ \cite{Kolev2009}.
We solve the definite Maxwell problem
\[
   \nabla \times \nabla \times \bm u + \beta \bm u = \bm f,
\]
where $\beta$ represents the conductivity coefficient.
This coefficient is given by a piecewise constant, with $\beta_{\text{air}} = 10^{-6}$ and $\beta_{\text{copper}} = 1$.
This problem is solved on a mesh with 21{,}060 curved $\mathcal{Q}_3$ elements.
A schematic of this problem and the computational mesh are shown in \Cref{fig:copper-wire}.
The right hand side is chosen to be $\bm f = 1$.

\begin{figure}
   \centering
   \includegraphics{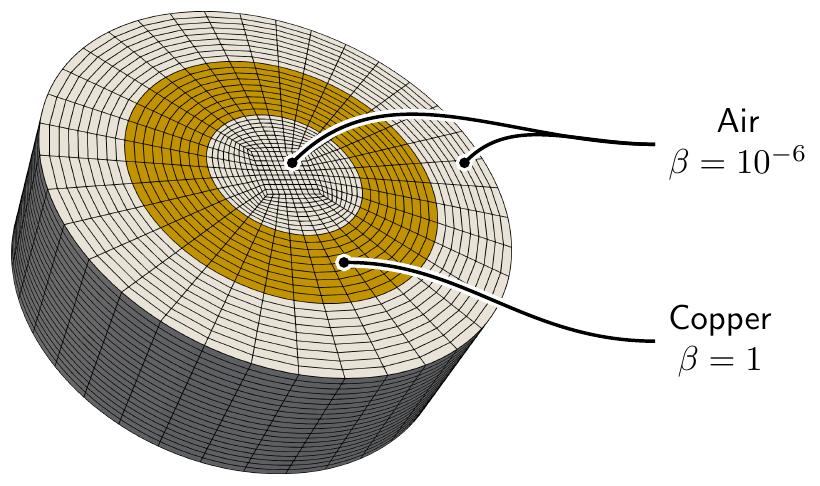}
   \caption{
      Mesh of the copper wire Maxwell problem.
      The conductivity coefficient is a piecewise constant coefficient determined by the material of the elements.
   }
   \label{fig:copper-wire}
\end{figure}

We compare the number of iterations and computational time required to solve this problem to a relative tolerance of $10^{-12}$ using the auxiliary space Maxwell algebraic multigrid solver, applied directly to the assembled high-order system (denoted ``Matrix-Based AMS''), and applied to the low-order refined system (``LOR--AMS'').
This problem is solved using 144 MPI ranks of LLNL's \textit{Quartz} supercomputer.
The results are shown in \Cref{tab:copper-wire}.
The iterations required for the LOR--AMS solver require at most $1.5\times$ as many iterations as the matrix-based AMS solver.
However, the assembly time (which for the LOR--AMS solver denotes the time required to assemble the LOR matrix, as well as the time required for the ``partial assembly'' of the high-order operator) is significantly reduced for the LOR--AMS solver.
Additionally, the number of nonzero entries of the system matrix (and hence the memory requirements for the solver) are significantly reduced for the LOR--AMS solver.
Speedup and memory reduction factors are reported in \Cref{tab:copper-wire-speedup}.
For polynomial degree $p=6$, the total runtime is reduced by a factor of $25$, and the memory usage is reduced by a factor of $35$.

\begin{table}
   \centering
   \caption{Convergence results and runtimes for the copper wire Maxwell problem.}
   \label{tab:copper-wire}
   \setlength{\tabcolsep}{10pt}
   \begin{tabular}{ccSSSS[table-number-alignment=right,table-column-width=0.9in]l}
      \toprule
      \multicolumn{7}{c}{LOR--AMS} \\
      \midrule
      $p$ & Its. & \multicolumn{1}{c}{Assembly (s)} & \multicolumn{1}{c}{AMG Setup (s)} & \multicolumn{1}{c}{Solve (s)} & \multicolumn{1}{c}{\# DOFs} & \multicolumn{1}{c}{\# NNZ} \\
      \midrule
          2 &         41 & 0.082 & 0.277 & 0.768 & 516820 & $1.65 \times 10^{7}$ \\
          3 &         63 & 0.251 & 0.512 & 2.754 & 1731408 & $5.64 \times 10^{7}$ \\
          4 &         75 & 0.679 & 1.133 & 7.304 & 4088888 & $1.34 \times 10^{8}$ \\
          5 &         62 & 1.574 & 2.185 & 11.783 & 7968340 & $2.61 \times 10^{8}$ \\
          6 &         89 & 3.336 & 4.024 & 30.702 & 13748844 & $4.51 \times 10^{8}$ \\
      \midrule
      \multicolumn{7}{c}{Matrix-Based AMS} \\
      \midrule
      $p$ & Its. & \multicolumn{1}{c}{Assembly (s)} & \multicolumn{1}{c}{AMG Setup (s)} & \multicolumn{1}{c}{Solve (s)} & \multicolumn{1}{c}{\# DOFs} & \multicolumn{1}{c}{\# NNZ} \\
      \midrule
      2 & 39 & 0.140 & 0.385 & 1.423 & 516820 & $5.24 \times 10^{7}$ \\
      3 & 44 & 1.368 & 1.572 & 9.723 & 1731408 & $4.01 \times 10^{8}$ \\
      4 & 49 & 9.668 & 5.824 & 45.277 & 4088888 & $1.80 \times 10^{9}$ \\
      5 & 53 & 61.726 & 15.695 & 148.757 & 7968340 & $5.92 \times 10^{9}$ \\
      6 & 56 & 502.607 & 40.128 & 424.100 & 13748844 & $1.59 \times 10^{10}$ \\
      \bottomrule
  \end{tabular}
\end{table}

\begin{table}
   \centering
   \caption{
      Speedup and memory reduction for the copper wire Maxwell problem.
      Runtime includes assembly, solver setup, and solve times.
      Memory indicates the size of the assembled system matrix in CSR format.
   }
   \label{tab:copper-wire-speedup}
   \begin{tabular}{c|SS|SS|SS}
      \toprule
      & \multicolumn{2}{c|}{LOR--AMS} & \multicolumn{2}{c|}{Matrix-Based AMS} \\
      $p$ & {Runtime (s)} & {Memory (GB)} & {Runtime (s)} & {Memory (GB)} & {Speedup} & {Memory Reduction} \\
      \midrule
          2 &  1.13 &  0.19 &  1.95 &  0.59 &  1.73$\times$ &  3.16$\times$ \\
          3 &  3.52 &  0.64 & 12.66 &  4.49 &  3.60$\times$ &  7.05$\times$ \\
          4 &  9.11 &  1.51 & 60.77 & 20.09 &  6.67$\times$ & 13.31$\times$ \\
          5 & 15.54 &  2.95 & 226.18 & 66.15 & 14.55$\times$ & 22.45$\times$ \\
          6 & 38.06 &  5.09 & 966.83 & 178.18 & 25.40$\times$ & 35.00$\times$ \\
      \bottomrule
  \end{tabular}
\end{table}

\subsection{Grad-div problem: crooked pipe}

In this section we consider the ``crooked pipe'' grad-div problem, which is a benchmark problem related to radiation diffusion simulations \cite{Graziani2000,Gentile2001}.
The problem is posed on a cylindrical sector, consisting of two material subdomains.
The mesh elements near the interface between the subdomains are refined anisotropically, leading to highly stretched elements.
The mesh used for this problem is shown in \Cref{fig:crooked-pipe}.
We solve the problem
\[
   \nabla \left( \alpha \nabla \cdot \bm u \right) - \beta \bm u = \bm f,
\]
where the coefficients $\alpha$ and $\beta$ are given piecewise constant values according to the materials.
In the larger subregion (colored green in \Cref{fig:crooked-pipe}), we take $\alpha = 1.88 \times 10^{-3}$ and $\beta = 2000$.
In the smaller subregion (colored blue in \Cref{fig:crooked-pipe}), we take $\alpha = 1.641$ and $\beta = 0.2$.

\begin{figure}
   \centering
   \raisebox{-0.5\height}{\includegraphics{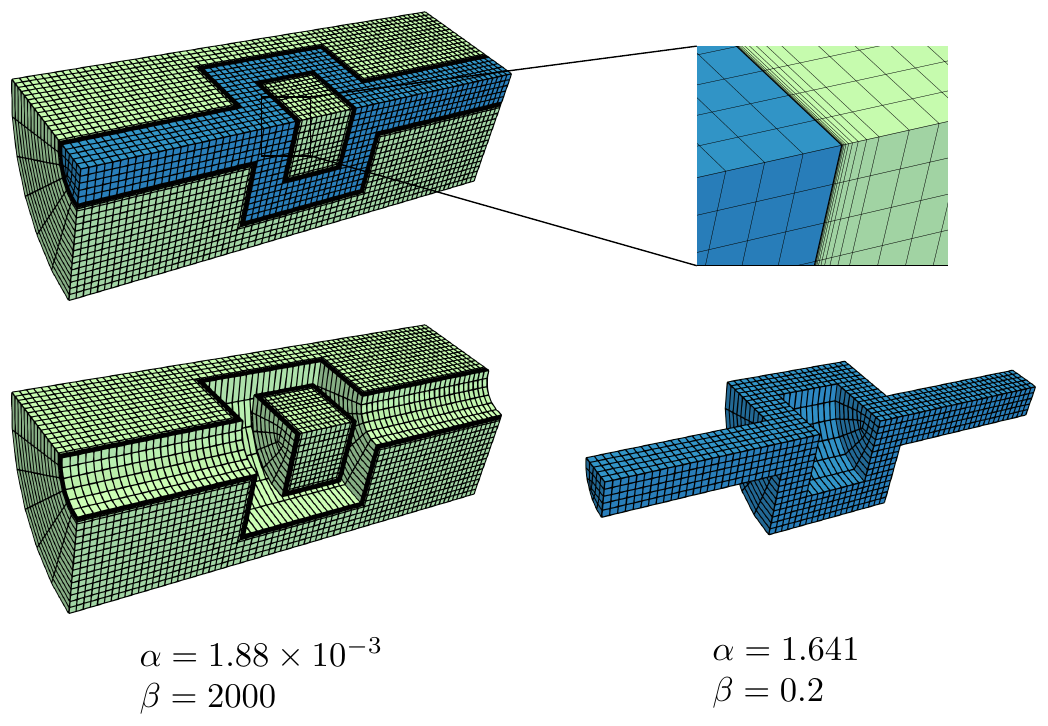}}
   \raisebox{-0.5\height}{\includegraphics[width=2in]{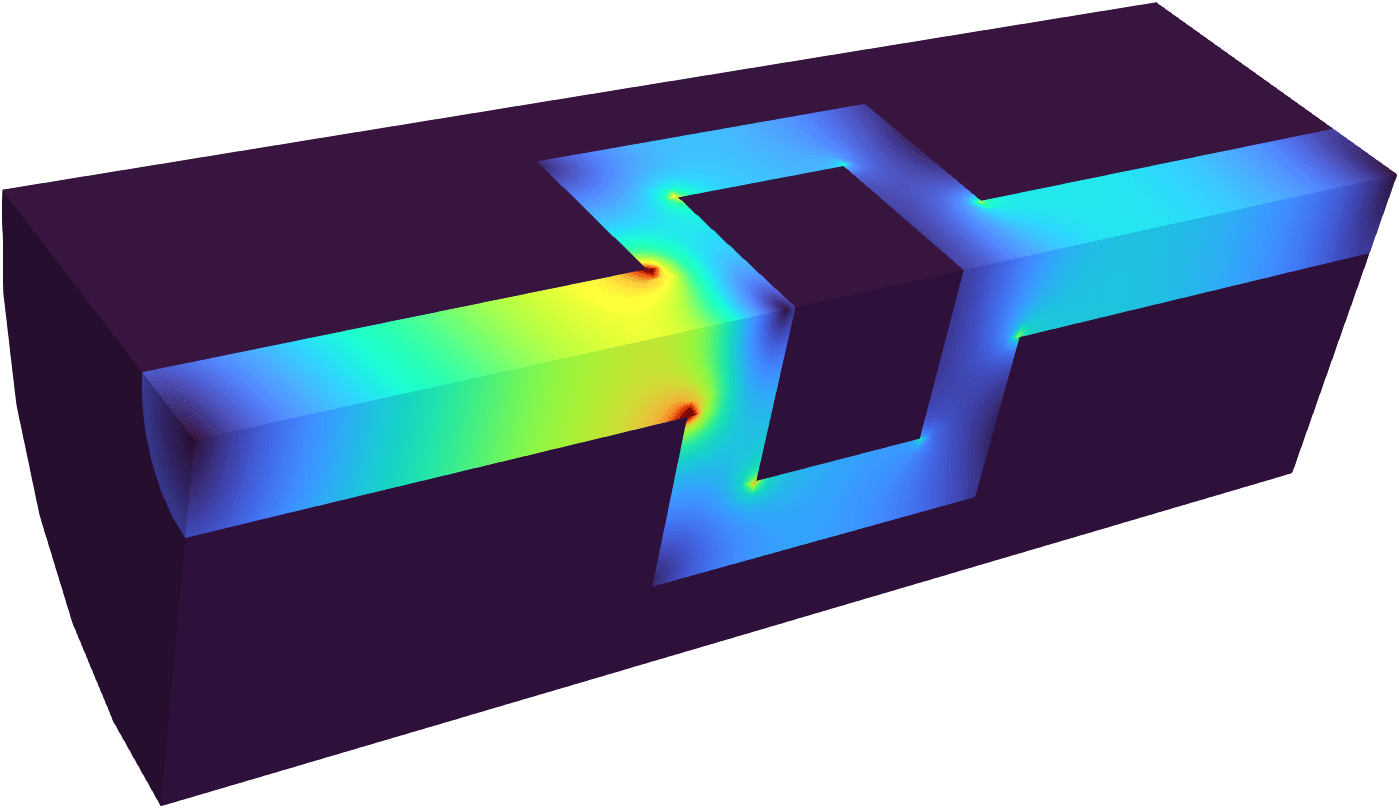}}
   \caption{
      Crooked pipe grad-div problem.
      The domain is split into two materials.
      The mesh is refined in the vicinity of the material interface, resulting in highly stretched anisotropic elements.
   }
   \label{fig:crooked-pipe}
\end{figure}

Analogously to the previous section, we compare the number of iterations and computational time required to solve this problem to a relative tolerance of $10^{-12}$ using the auxiliary space divergence algebraic multigrid solver, applied directly to the assembled high-order system (denoted ``Matrix-Based ADS''), and applied to the low-order refined system (``LOR--ADS'').
As in the previous section, this problem is solved in parallel using 144 MPI ranks.
These results are shown in \Cref{tab:crooked-pipe}.
For $p=2$, the LOR--ADS solver requires about $1.3\times$ as many iterations as the matrix-based ADS solver.
This factor increases to about $2.3\times$ for $p=6$.
This preasymptotic increase is largely consistent with the conditioning results shown in \Cref{fig:element-conditioning,fig:mass}.
Despite this increase in the number of iterations, the solve time alone is reduced by more than a factor of $10\times$ by using the LOR--ADS preconditioner for $p=6$.
Total speedup and memory reduction results are shown in \Cref{tab:crooked-pipe-speedup}.
For this test case, the reduction in memory usage is even more dramatic than for the $\Hcurl$ problem; at $p=6$, the memory required to store the system matrix is reduced by a factor of $77\times$ using the LOR method.

\begin{table}
   \centering
   \caption{Convergence results for the crooked pipe grad-div problem.}
   \label{tab:crooked-pipe}

   \setlength{\tabcolsep}{10pt}
   \begin{tabular}{ccSSSS[table-number-alignment=right,table-column-width=0.9in]l}
      \toprule
      \multicolumn{7}{c}{LOR--ADS} \\
      \midrule
      $p$ & Its. & \multicolumn{1}{c}{Assembly (s)} & \multicolumn{1}{c}{AMG Setup (s)} & \multicolumn{1}{c}{Solve (s)} & \multicolumn{1}{c}{\# DOFs} & \multicolumn{1}{c}{\# NNZ} \\
      \midrule
      2 & 115 & 0.027 & 0.299 & 1.782 & 356500 & $3.81 \times 10^{6}$ \\
      3 & 168 & 0.075 & 0.583 & 5.986 & 1190115 & $1.28 \times 10^{7}$ \\
      4 & 197 & 0.218 & 1.245 & 15.932 & 2805520 & $3.04 \times 10^{7}$ \\
      5 & 243 & 0.456 & 2.428 & 40.814 & 5461375 & $5.93 \times 10^{7}$ \\
      6 & 276 & 0.907 & 4.969 & 86.882 & 9416340 & $1.03 \times 10^{8}$ \\
      \midrule
      \multicolumn{7}{c}{Matrix-Based ADS} \\
      \midrule
      $p$ & Its. & \multicolumn{1}{c}{Assembly (s)} & \multicolumn{1}{c}{AMG Setup (s)} & \multicolumn{1}{c}{Solve (s)} & \multicolumn{1}{c}{\# DOFs} & \multicolumn{1}{c}{\# NNZ} \\
      \midrule
      2 & 86 & 0.032 & 0.459 & 2.486 & 356500 & $1.79 \times 10^{7}$ \\
      3 & 99 & 0.417 & 2.815 & 19.351 & 1190115 & $1.64 \times 10^{8}$ \\
      4 & 104 & 3.166 & 12.338 & 98.844 & 2805520 & $8.16 \times 10^{8}$ \\
      5 & 112 & 18.062 & 46.590 & 358.085 & 5461375 & $2.88 \times 10^{9}$ \\
      6 & 121 & 181.517 & 165.135 & 978.285 & 9416340 & $8.15 \times 10^{9}$ \\
      \bottomrule
  \end{tabular}
\end{table}

\begin{table}
   \centering
   \caption{
      Speedup and memory reduction for the crooked-pipe grad-div problem.
      Runtime includes assembly, solver setup, and solve times.
      Memory indicates the size of the assembled system matrix in CSR format.
   }
   \label{tab:crooked-pipe-speedup}
   \begin{tabular}{c|SS|SS|SS}
      \toprule
      & \multicolumn{2}{c|}{LOR--ADS} & \multicolumn{2}{c|}{Matrix-Based ADS} \\
      $p$ & {Runtime (s)} & {Memory (GB)} & {Runtime (s)} & {Memory (GB)} & {Speedup} & {Memory Reduction} \\
      \midrule
      2 &  2.11 &  0.04 &  2.98 &  0.20 &  1.41$\times$ &  4.60$\times$ \\
      3 &  6.64 &  0.15 & 22.58 &  1.84 &  3.40$\times$ & 12.44$\times$ \\
      4 & 17.40 &  0.35 & 114.35 &  9.13 &  6.57$\times$ & 26.07$\times$ \\
      5 & 43.70 &  0.68 & 422.74 & 32.21 &  9.67$\times$ & 47.11$\times$ \\
      6 & 92.76 &  1.18 & 1324.94 & 91.09 & 14.28$\times$ & 77.13$\times$ \\
      \bottomrule
  \end{tabular}
\end{table}

\subsection{Discontinuous Galerkin methods} \label{sec:dg-results}

In this section, we consider the low-order preconditioning for discontinuous Galerkin methods proposed in \Cref{sec:dg}.
As a test case, we use the solver benchmark problem proposed in \cite{Kolev2021}, and solve the constant-coefficient Poisson problem
\begin{align*}
   -\Delta u &= f \quad\text{in $\Omega$,} \\
   u &= 0 \quad\text{on $\partial\Omega$,}
\end{align*}
with homogeneous Dirichlet boundary conditions in the unit cube $\Omega = [0,1]^3$.
The right-hand side $f$ is determined by the prescribed exact solution $u$, which is given as the tensor-product of one-dimensional functions (parameterized by the so-called \textit{structure level} $n \in \mathbb{N}$), $u(x,y,z) = w_n(x) w_n (y) w_n(z)$.
The functions $w_n(x)$ are given by
\begin{align*}
   w_n(x) &= \sum_{j=0}^{n-1} u_{3^j}(x), \\
   u_k(x) &= \exp(-1 / s_k^2(x)) \operatorname{sign}(s_k(x)), \\
   s_k(x) &= \sin(2 j \pi x).
\end{align*}
We consider a family of \textit{Kershaw meshes} (cf.\ \cite{Kershaw1981}), parameterized by an anisotropy parameter $0 < \varepsilon \leq 1$.
These meshes are obtained by distorting a Cartesian grid, such that layers of elements with aspect ratio $1/\varepsilon$ are placed in opposing corners of the cube.
The mesh transitions through four intermediate layers in a ``Z'' pattern, giving rise to skewed and stretched elements.
When $\varepsilon = 1$, the mesh is a uniform Cartesian grid.
The geometric anisotropy induced by smaller values of $\varepsilon$ often proves challenging for linear solvers and preconditioners.
\Cref{fig:kershaw} illustrates examples of the Kershaw mesh for $\varepsilon = 1$ and $\varepsilon = 0.3$ on a mesh with $12^3$ elements.

\begin{figure}
   \centering
   \begin{tabular}{c@{\hskip 0.5in}c}
      \includegraphics[width=1.5in]{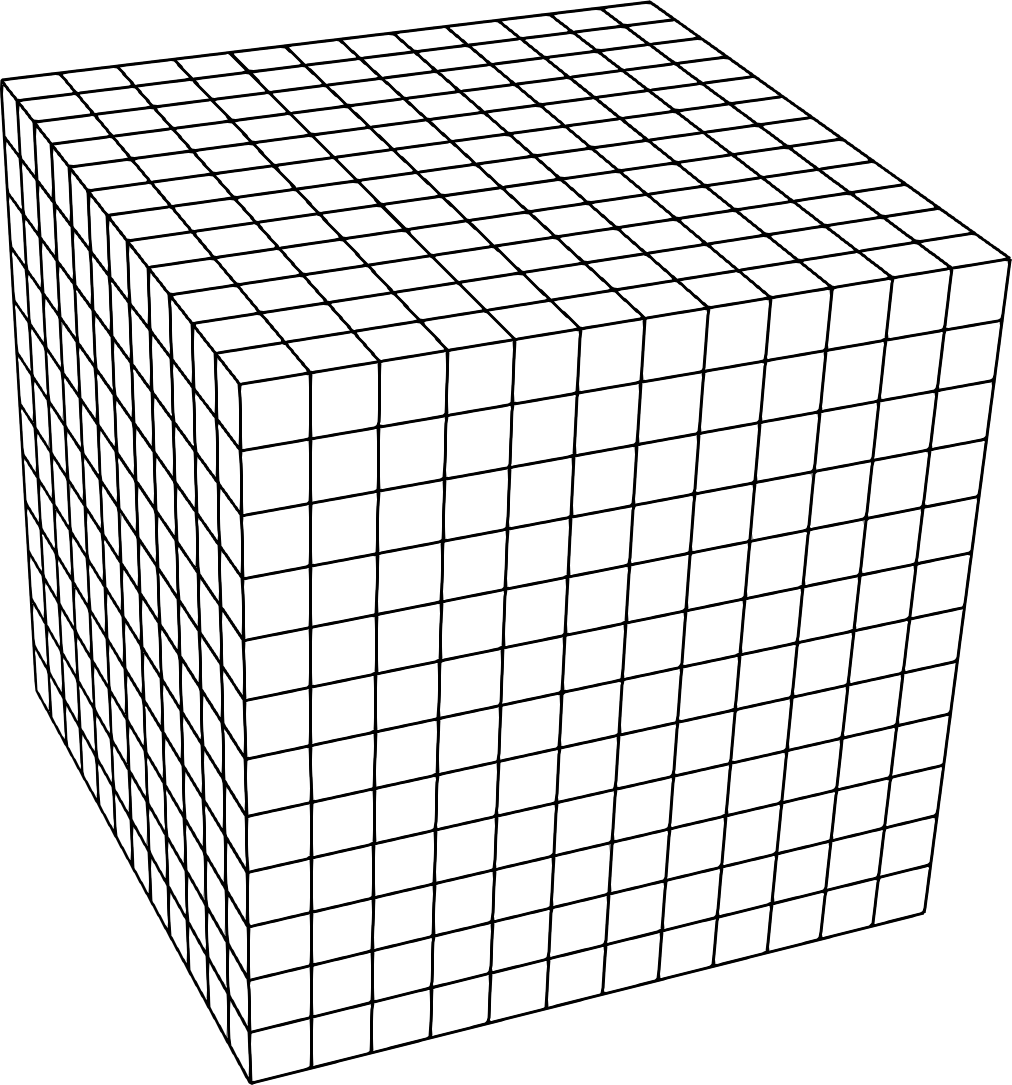} &
      \includegraphics[width=1.5in]{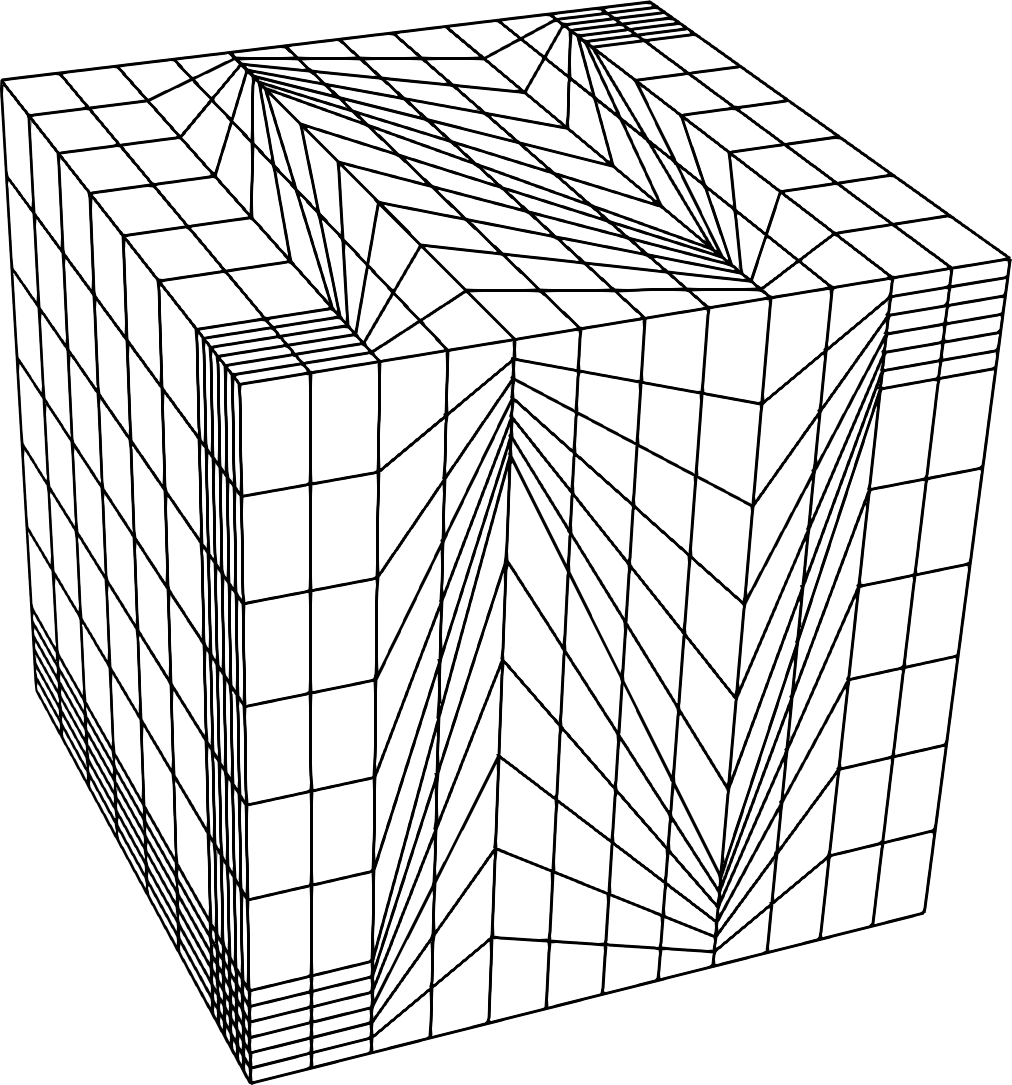} \\
      $\varepsilon = 1$ & $\varepsilon = 0.3$
   \end{tabular}
   \caption{Illustrations of the Kershaw mesh with $12^3$ elements and $\varepsilon = 1$ (left) and $\varepsilon = 0.3$ (right).}
   \label{fig:kershaw}
\end{figure}

We solve this problem using both $\varepsilon = 1$ (isotropic elements) and $\varepsilon = 0.3$ (anisotropic elements).
We begin with polynomial degree $p=1$ on a mesh with $36^3$ elements, corresponding to 373{,}248 degrees of freedom.
For increasing polynomial degrees $p=2,3,5,7,9$, the mesh is simultaneously coarsened to keep the total number of degrees of freedom fixed.
In each case, we assemble the corresponding low-order-refined system $K_{\Zh}$ using the piecewise constant DG discretization defined in \Cref{sec:dg}.
As a preconditioner, we use \textit{hypre}'s BoomerAMG algebraic multigrid with $\ell_1$--Jacobi smoothing \cite{Baker2011}.
In Table \ref{tab:dg-kershaw} we present the number of conjugate gradient iterations required to reduce the residual by a factor of $10^{12}$.
In addition to the iteration counts required to solve the high-order problem $K_{\Zp}$, we also present the number of AMG-preconditioned CG iterations required to solve the low-order-refined problem $K_{\Zh}$.
Furthermore, we compare these iteration counts to the number of iterations required to solve the high-order problem using ``CG--DG preconditioning\rlap{,}'' which is based on the idea of using the low-order-refined conforming (continuous Galerkin) problem as a preconditioner, together with a diagonal correction, cf.\ \cite{Dobrev2006,Antonietti2016,Pazner2020a}.

For $\varepsilon = 1$, the number of iterations required to solve the high-order problem using the low-order-refined problem (the column labeled $B_h K_{\Zp}$ in \Cref{tab:dg-kershaw}) as a preconditioner remains bounded, independent of $p$.
For $\varepsilon = 0.3$, the problem is more challenging because of the mesh-induced anisotropy, however, after a mild preasymptotic increase, the iteration counts appear to be uniform with respect to $p$.
These results corroborate the spectral equivalence demonstrated in \Cref{thm:dg-equivalence}.
In both of these cases, the iterations required to solve this problem using the low-order-refined preconditioner described in the present work are significantly less than those required to solve this problem using the CG--DG subspace preconditioning.
We note that for both $\varepsilon = 1$ and $\varepsilon = 0.3$, BoomerAMG applied to the low-order-refined problem (the column labeled $B_h K_{\Zh}$), the number of iterations required to converge remains bounded, independent of $p$, corroborating the conclusions of \Cref{thm:decomposition}.

\begin{table}
   \centering
   \caption{
      Convergence results for discontinuous Galerkin discretizations of the Poisson problem on Kershaw meshes with $\varepsilon = 1$ and $\varepsilon = 0.3$.
      Each column indicates the number of CG iterations required to converge to a relative tolerance of $10^{-12}$, where $K_{\Zp}$ and $K_{\Zh}$ denote the high-order and low-order-refined operators, respectively, $B_p$ denotes BoomerAMG formed using the high-order matrix, $B_h$ denotes BoomerAMG formed using the low-order-refined matrix, and CG--DG indicates the use of the conforming problem as a subspace correction preconditioner.
   }
   \label{tab:dg-kershaw}
   \begin{tabular}{c|ccc|ccc}
      \toprule
      & \multicolumn{3}{c|}{$\varepsilon=1$} & \multicolumn{3}{c}{$\varepsilon=0.3$} \\
      $p$ & $B_h K_{\Zp}$ & $B_h K_{\Zh}$ & CG--DG & $B_h K_{\Zp}$ & $B_h K_{\Zh}$ & CG--DG \\
      \midrule
          1 &    51 &        20 &       87 &      88 &          25 &        385 \\
          2 &    48 &        22 &       68 &      83 &          25 &        311 \\
          3 &    49 &        23 &       61 &      97 &          27 &        300 \\
          5 &    47 &        23 &       60 &     115 &          29 &        319 \\
          7 &    48 &        23 &       66 &     121 &          32 &        308 \\
          9 &    49 &        23 &       71 &     111 &          29 &        285 \\
      \bottomrule
   \end{tabular}
\end{table}

We additionally study the dependence of the convergence properties of the low-order-refined AMG preconditioners on the magnitude of the DG interior penalty parameter.
The condition number of the system $K_{\Zp}$ scales linearly with the penalty parameter $\eta$ (cf.\ \Cref{prop:dg-norm}), and geometric and algebraic multigrid preconditioners often give degraded convergence for large values of the penalty parameter.
As an example, we take the case of $p=1$ with $36^3$ elements, and compute the number of CG iterations required to converge to a relative tolerance of $10^{-12}$ for increasing values of the penalty parameter.
We compare BoomerAMG applied directly to $K_{\Zp}$ (this preconditioner is denoted $B_p$), BoomerAMG applied to $K_{\Zh}$ (this preconditioner is denoted $B_h$), and CG--DG preconditioning, and present the results in \Cref{fig:dg-penalty}.
We note that the iteration counts for BoomerAMG applied directly to $K_{\Zp}$ increase substantially as $\eta$ increases; for the case of $\varepsilon = 0.3$, the convergence criterion was not met in under 2000 iterations for $\eta \geq 10^3$.
The iteration counts for BoomerAMG formed using the low-order-refined system $K_{\Zh}$ and applied to both $K_{\Zp}$ and $K_{\Zh}$ remain bounded, independent of $\eta$, corroborating the results of \Cref{thm:dg-equivalence,thm:decomposition}.
Similarly, the iteration counts for the CG--DG preconditioner are bounded independent of $\eta$, cf.\ \cite{Antonietti2016,Pazner2020a}.

\begin{figure}
   \centering
   \includegraphics{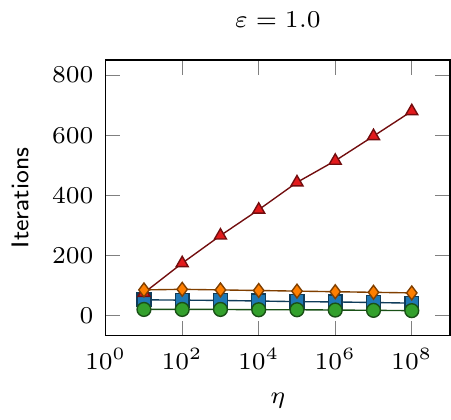}
   \includegraphics{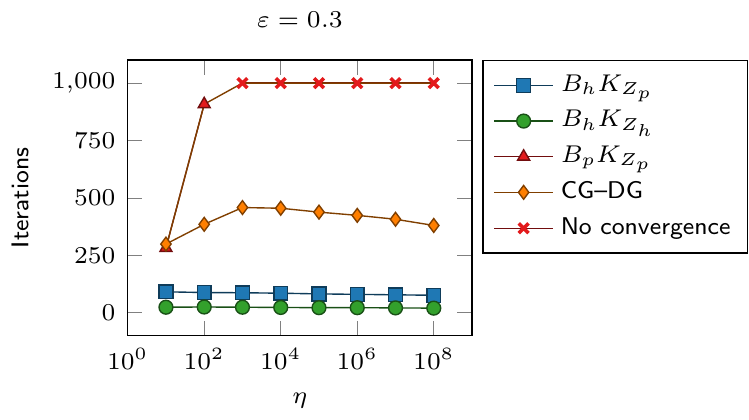}

   \DeclareRobustCommand\tikzmarker{\tikz[baseline=-0.5ex]{\node[red,line width=1pt,mark size=3pt] {\pgfuseplotmark{x}};}}

   \caption{
      Dependence of iteration counts on DG penalty parameter.
      No convergence in fewer than 2{,}000 iterations is indicated by a red ``\tikzmarker'' symbol.
   }
   \label{fig:dg-penalty}
\end{figure}

\section{Conclusions}
\label{sec:conclusions}

In this work, we have presented a framework for the construction of spectrally equivalent low-order-refined discretizations using interpolation and histopolation operators with Gauss--Lobatto points.
Simple one-dimensional norm equivalence properties of these operators can be combined using tensor-product arguments to give natural norm and seminorm equivalences in all spaces of the de Rham complex.
As an immediate consequence, we obtain spectral equivalence for the mass and stiffness matrices in $H^1$, $\Hcurl$, and $\Hdiv$, using Lagrange, \Nedelec, and Raviart--Thomas elements, independent of polynomial degree $p$ and mesh size $p$.
We additionally present a novel piecewise constant discontinuous Galerkin discretization that is spectrally equivalent to the high-order interior penalty DG discretization, independent of $p$, $h$, and penalty parameter.
This low-order discretization is equivalent to a certain weighted graph Laplacian, for which we demonstrate efficient algebraic multigrid convergence.
We use the efficient and highly scalable algebraic multigrid methods from \textit{hypre}, built using the low-order discretizations, to obtain matrix-free solvers for the high-order finite element problems; for $\Hcurl$ and $\Hdiv$ problems, we use the AMS and ADS algebraic solvers.
The effectiveness of these preconditioners on a number of three-dimensional problems is studied.
These problems possess challenging features such as coefficients with large contrasts and highly distorted geometries.
The theoretical properties of the spectrally equivalent low-order discretizations are verified.
Additionally, we demonstrate significant speedups and memory savings using the proposed solvers, in particular at higher orders.
For discontinuous Galerkin discretization, the new method proposed compares favorably to techniques that make use of the conforming subspace as a preconditioner.
Although not a focus of the present paper, these methods are highly amenable to GPU acceleration, which we anticipate to be the topic of future work.

\section{Acknowledgments}

The authors thank V.~Dobrev for insightful comments and suggestions.
This work was performed under the auspices of the U.S. Department of Energy by Lawrence Livermore National Laboratory under Contract DE-AC52-07NA27344 and was supported by the LLNL-LDRD Program under Project No.\ 20-ERD-002 (LLNL-JRNL-831792).
Sandia National Laboratories is a multimission laboratory managed and operated by National Technology and Engineering Solutions of Sandia, LLC, a wholly owned subsidiary of Honeywell International Inc., for the U.S. Department of Energy’s National Nuclear Security Administration under contract DE-NA0003525.

\printbibliography

\end{document}